\theoremstyle{plain}
\newtheorem{theorem}{Theorem}[section]
\newtheorem{corollary}[theorem]{Corollary}
\newtheorem{lemma}[theorem]{Lemma}
\newtheorem{conjecture}[theorem]{Conjecture}
\theoremstyle{definition}
\newtheorem{definition}[theorem]{Definition}
\theoremstyle{remark}
\newtheorem{remark}[theorem]{Remark}
\numberwithin{equation}{subsection}
\RenewDocumentCommand{\AA}{}{\mathbb{A}}
\NewDocumentCommand{\FF}{}{\mathbb{F}}
\NewDocumentCommand{\HH}{}{\mathbb{H}}
\NewDocumentCommand{\PP}{}{\mathbb{P}}
\NewDocumentCommand{\RR}{}{\mathbb{R}}
\NewDocumentCommand{\cB}{}{\mathcal{B}}
\NewDocumentCommand{\cG}{}{\mathcal{G}}
\NewDocumentCommand{\cH}{}{\mathcal{H}}
\DeclareMathOperator{\Aut}{Aut}
\DeclareMathOperator{\agr}{agr}
\DeclareMathOperator{\ddeg}{\mathbf{deg}}
\DeclareMathOperator{\Gal}{Gal}
\DeclareMathOperator{\Spec}{Spec}
\DeclareMathOperator{\Sym}{Sym}
\NewDocumentCommand{\clo}{m m}{{{}_{#1}{#2}}}
\NewDocumentCommand{\GL}{O{2}}{\operatorname{GL}_{#1}}
\NewDocumentCommand{\op}{}{\mathrm{op}}
\NewDocumentCommand{\Pone}{}{\ensuremath{\mathbb{P}^{1}}}
\NewDocumentCommand{\Q}{}{\mathbb{Q}}
\NewDocumentCommand{\Qbar}{}{\overline{\Q}}
\NewDocumentCommand{\Qcm}{}{\Q^{\mathrm{CM}}}
\NewDocumentCommand{\SL}{O{2}}{\operatorname{SL}_{#1}}
\NewDocumentCommand{\tors}{}{\mathrm{tors}}
\NewDocumentCommand{\Z}{s o}{\IfBooleanT{#1}{(}\mathbb{Z}\IfValueT{#2}{/ #2 \mathbb{Z}}\IfBooleanT{#1}{)^{\times}}}
\NewDocumentCommand{\Zhat}{s}{\widehat{\Z}\IfBooleanT{#1}{^{\times}}}
\NewDocumentCommand{\ldblbrace}{}{\{\mskip-5mu\{}
\NewDocumentCommand{\rdblbrace}{}{\}\mskip-5mu\}}
\DeclareFontFamily{U}{wncy}{}
\DeclareFontShape{U}{wncy}{m}{n}{<->wncyr10}{}
\DeclareSymbolFont{mcy}{U}{wncy}{m}{n}
\DeclareMathSymbol{\Sha}{\mathord}{mcy}{"58}
\DeclareMathSymbol{\shortminus}{\mathbin}{AMSa}{"39}
\def\@@@cmidrule[#1-#2]#3#4{\global\@cmidla#1\relax
	\global\advance\@cmidla\m@ne
	\ifnum\@cmidla>0\global\let\@gtempa\@cmidrulea\else
	\global\let\@gtempa\@cmidruleb\fi
	\global\@cmidlb#2\relax
	\global\advance\@cmidlb-\@cmidla
	\global\@thisrulewidth=#3
	\@setrulekerning{#4}
	\ifnum\@lastruleclass=\z@\nobreak\vskip \aboverulesep\fi
	\ifnum0=`{\fi}\@gtempa
	\noalign{\ifnum0=`}\fi\futurenonspacelet\@tempa\@xcmidrule}
\def\@xcmidrule{%
	\ifx\@tempa\cmidrule
		\nobreak\vskip-\@thisrulewidth
		\global\@lastruleclass=\@ne
	\else \ifx\@tempa\morecmidrules
		\nobreak\vskip \cmidrulesep
		\global\@lastruleclass=\@ne\else
		\nobreak\vskip \belowrulesep
		\global\@lastruleclass=\z@
	\fi\fi
	\ifnum0=`{\fi}}
\title{Degrees of points with rational $j$-invariant on $X_{0}(n)$ and $X_{1}(n)$}
\date{}
\author{Kenji Terao}
\begin{document}
	\begin{abstract}
		We give a classification of the degrees of the points with rational $j$-invariant on the modular curves $X_{0}(n)$ and $X_{1}(n)$. The degrees which occur infinitely often are computed unconditionally, while those which occur finitely often are determined assuming a conjecture of Zywina. To achieve this, we define the notion of $\cH$-closures of subgroups of $\GL(\Zhat)$, and compute the $\cB_{0}(n)$- and $\cB_{1}(n)$-closures of images of Galois representations of elliptic curves defined over $\Q$. An application to computing the set of isolated $j$-invariants in $\Q$ is also given.
	\end{abstract}
	
	\maketitle
	
	\section{Introduction} \label{sec:introduction}
	
	Let $E/ K$ be an elliptic curve defined over a number field $K$. The celebrated Mordell-Weil theorem \cite{weil1929arithmetique} states that the set of $K$-rational points $E(K)$ on $E$ is a finitely generated abelian group. In particular, we have that
	\[
		E(K) \cong \Z^{r} \times E(K)_{\tors},
	\]
	for some $r \in \Z_{\geq 0}$ and some finite abelian group $E(K)_{\tors}$.
	
	The structure of the finite abelian group $E(K)_{\operatorname{tors}}$ has long been of interest, and has been deeply studied. For instance, a landmark result of Mazur \cite{mazur1977modular} shows that, for an elliptic curve $E / \Q$, we have
	\[
		E(\Q)_{\tors} \cong \begin{cases}
			\Z[n] & n \in \{1, \dots, 10, 12\} \\
			\Z[2] \times \Z[2n] & n \in \{1, \dots, 4\},
		\end{cases}
	\]
	and all of these possibilities occur for infinitely many $\Qbar$-isomorphism classes of elliptic curves over $\Q$.
	
	Merel's uniform boundedness theorem \cite{merel1996bornes} states that, for any number field $K$, there exists a constant $B(d)$, depending only on the degree $d$ of $K$, such that for any elliptic curve $E / K$,
	\[
		\left| E(K)_{\tors} \right| \leq B(d).
	\]
	In particular, there are only finitely many possible structures for the torsion subgroup $E(K)_{\tors}$ as $E$ varies through all elliptic curves defined over a number field $K$ of a fixed degree $d$.
	
	Classifying these structures then becomes a natural problem, which has been solved for small values of $d$. In addition to Mazur's result, which covers the case $d = 1$, the case of $d = 2$ was solved by Kamienny \cite{kamienny1992torsion}, Kenku and Momose \cite{kenku1988torsion}, while the case of $d = 3$ was completed by Derickx, Etropolski, van Hoeij, Morrow and Zureick-Brown in \cite{derickx2021sporadic}. Further partial results have also been obtained, such as the classification of torsion subgroups which occur infinitely often for elliptic curves over number fields of degrees $d = 4, 5$ and $6$, the former by Jeon, Kim and Schweizer in \cite{jeon2004torsion}, and the latter two by Derickx and Sutherland in \cite{derickx2017torsion}.
	
	One can also impose restrictions on the elliptic curve $E$, such as requiring the elliptic curve $E / K$ to be the base change of an elliptic curve $E / L$ defined over a smaller, usually fixed, number field $L$. This amounts to studying the growth of the torsion subgroups $E(K)_{\tors}$ as $K$ varies through all extensions of $L$. Much literature has been expounded on this subject, and we only briefly mention a few results. In particular, taking $L = \Q$, the torsion subgroups $E(K)_{\tors}$ have been classified for all number fields $K$ of degree $d = 2$ and 3 by Najman in \cite{najman2016torsion}, of degree $d = 4$ by Chou \cite{chou2016torsion}, Gonz\'alez-Jim\'enez and Najman \cite{gonzalez2020growth}, of degree $d = 5$ by Gonz\'alez-Jim\'enez in \cite{gonzalez2017complete}, of degree $d = 6$ by Gu\v{z}vi\'c in \cite{guzvic2021torsion}, of degree $d = p \geq 7$ by Gonz\'alez-Jim\'enez and Najman in \cite{gonzalez2020growth} and of degree $d = pq$ by Gu\v{z}vi\'c in \cite{guzvic2023torsion}, where $p$ and $q$ are prime. Gonz\'alez-Jim\'enez and Najman \cite{gonzalez2020growth} also determine the set of possible primes dividing the order $|E(K)_{\tors}|$ as $K$ varies through all number fields of degree $d$, for $d \leq 3342296$.
	
	The study of the torsion subgroup $E(K)_{\tors}$ is intimately linked to the study of the modular curves $X_{1}(n)$, which parametrize pairs $(E, P)$, where $E$ is an elliptic curve and $P \in E$ a point of exact order $n$. By \cite[Lemma 2.1]{bourdon2019level}, the degree of the corresponding point on $X_{1}(n)$ is closely related to the degree of the number field over which both $E$ and $P$ are defined. This allows one to reinterpret in large part the classifications of rational, quadratic and cubic torsion on elliptic curves as classifications of the rational, quadratic and cubic points on the modular curves $X_{1}(n)$ respectively. Restricting to elliptic curves $E$ defined over $\Q$ also has a natural interpretation in terms of the modular curves $X_{1}(n)$: namely, instead of classifying all degree $d$ points on the modular curves $X_{1}(n)$, one restricts to those degree $d$ points with rational $j$-invariant.
	
	\subsection{Main results} \label{sec:introduction:main_results}
	
	It is thus in this context that we undertake the main objective of this paper: a classification of the possible degrees of the points of $X_{1}(n)$ with rational $j$-invariant. In addition, we provide a similar classification for the modular curves $X_{0}(n)$, as many of the techniques developed for $X_{1}(n)$ apply naturally to this setting as well. As was the case for the classification of torsion subgroups of elliptic curves, it is natural to split this classification into two parts. The first is the classification of the degrees which occur for infinitely many points on the modular curve $X_{0}(n)$ or $X_{1}(n)$. This case is somewhat easier, and we obtain unconditional results. For the modular curves $X_{0}(n)$, we obtain the following.
	
	\begin{theorem} \label{thm:introduction:infinite_x0_degrees}
		Let $n \geq 1$. Define the sets
		\begin{align*}
			D_{0}^{\infty}(1) & = \{1\}, & D_{0}^{\infty}(6) & = \{1, 2\}, & D_{0}^{\infty}(12) & = \{1, 3\}, \\
			D_{0}^{\infty}(2) & = \{1, 2\}, & D_{0}^{\infty}(7) & = \{1, 2, 6, 7\}, & D_{0}^{\infty}(13) & = \{1, 13\}, \\
			D_{0}^{\infty}(3) & = \{1, 2, 3\}, & D_{0}^{\infty}(8) & = \{1\}, & D_{0}^{\infty}(16) & = \{1\}, \\
			D_{0}^{\infty}(4) & = \{1, 3\}, & D_{0}^{\infty}(9) & = \{1, 2\}, & D_{0}^{\infty}(18) & = \{1, 2, 4\}, \\
			D_{0}^{\infty}(5) & = \{1, 2, 4, 5\}, & D_{0}^{\infty}(10) & = \{1, 2, 5, 10\}, & D_{0}^{\infty}(25) & = \{1, 4\},
		\end{align*}
		and $D_{0}^{\infty}(m) = \emptyset$ otherwise. Then, we have that
		\begin{align*}
			& \{d : \exists^{\infty} \, x \in X_{0}(n) \text{ s.t. } \deg(x) = d, j(x) \in \Q\} \\
			& \quad = \bigcup_{m \mid n} \{d \cdot \deg(X_{0}(n) \to X_{0}(m)) : d \in D_{0}^{\infty}(m)\}.
		\end{align*}
	\end{theorem}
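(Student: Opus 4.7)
A degree-$d$ point $x \in X_0(n)$ with $j(x) \in \Q$ corresponds to a pair $(E, C)$, where $E/\Q$ has $j(E) = j(x)$ (for any choice of twist) and $C \subset E[n]$ is a cyclic subgroup of order $n$ whose Galois orbit has size exactly $d$; equivalently, the stabilizer of $C$ in the image of the mod-$n$ Galois representation $\rho_{E,n} \colon G_\Q \to \GL(\Z[n])$ has index $d$. Since scalars act trivially on cyclic subgroups, this orbit size is insensitive to quadratic twists of $E$. For each $m \mid n$, the projection $\pi_m \colon X_0(n) \to X_0(m)$ sends $(E, C)$ to $(E, C_m)$, where $C_m \subset C$ is the unique order-$m$ subgroup, so the degrees at level $m$ divide those at level $n$.

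For the $\supseteq$ inclusion, fix $m \mid n$ and $d' \in D_0^{\infty}(m)$. By inspection of the tabulated sets, the only $m$ with $D_0^{\infty}(m) \ne \emptyset$ are the $15$ values for which $X_0(m)$ has genus zero and is $\Q$-isomorphic to $\PP^1$; the definition of $D_0^{\infty}(m)$ then provides infinitely many $E/\Q$ yielding a degree-$d'$ point $(E, C_m) \in X_0(m)$ (a base-case claim to be verified separately for each genus-zero level). To promote each such $E$ to a degree-$d' \cdot \deg(\pi_m)$ point on $X_0(n)$, one needs the action of $\rho_{E,n}(G_\Q)$ on the $\deg(\pi_m)$ cyclic $n$-subgroups lifting $C_m$ to be transitive. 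I would invoke the $\cB_0(n)$-closure computations announced in the abstract, together with Hilbert irreducibility applied to the universal elliptic curve over $X_0(m) \cong \PP^1$, to conclude that all but finitely many $E$ in each family have $\rho_{E,n}(G_\Q)$ whose $\cB_0(n)$-closure equals the maximal subgroup compatible with the $X_0(m)$-structure, forcing transitivity.

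For the $\subseteq$ inclusion, suppose $d$ is the degree of infinitely many points on $X_0(n)$ with rational $j$-invariant. To each such $(E, C)$ I attach its \emph{origin}: the minimal $m \mid n$ for which $\deg_{X_0(n)}(E, C) = \deg_{X_0(m)}(E, C_m) \cdot \deg(\pi_m)$, with well-definedness following from the multiplicativity $B_0(n) \cong \prod_{p^k \| n} B_0(p^k)$. By pigeonhole some fixed $m \mid n$ is the origin of infinitely many of these points, whose projections exhibit infinitely many degree-$d/\deg(\pi_m)$ points on $X_0(m)$ of rational $j$-invariant and with no smaller origin. The latter condition is exactly the defining condition for membership in $D_0^{\infty}(m)$; that $D_0^{\infty}(m) \ne \emptyset$ then forces $m$ to be one of the $15$ listed values, since for any other $m$ the curve $X_0(m)$ has genus $\geq 1$ and only finitely many such points exist, by Faltings's theorem combined with the classification of cyclic isogenies over $\Q$ due to Mazur, Kenku, and Momose.

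The principal obstacle is the genericity step of the $\supseteq$ inclusion, namely showing that the $\cB_0(n)$-closure of $\rho_{E,n}(G_\Q)$ attains its maximum for a Zariski-dense set of $E/\Q$ along each of the $15$ families $X_0(m) \cong \PP^1$. This is exactly what the $\cB_0(n)$-closure formalism developed in the body of the paper is tailored to handle, reducing the question to a finite computation for each $m$. A secondary but non-trivial obstacle is the direct verification of each $D_0^{\infty}(m)$, in particular the less obvious entries such as $D_0^{\infty}(7) = \{1, 2, 6, 7\}$ and $D_0^{\infty}(25) = \{1, 4\}$, which reflect subtle Galois-theoretic constraints on the mod-$m$ image along each family.
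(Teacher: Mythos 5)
Your overall arc---reduction to $\cB_0(n)$-closures, Hilbert irreducibility over genus-zero covers for the $\supseteq$ direction, and a finiteness argument for the $\subseteq$ direction---matches the paper's strategy (Theorem \ref{thm:infinite_degrees:reduction_to_infinite_closures} through Theorem \ref{thm:infinite_degrees:infinite_fiber_degrees}). However, your organizing device differs: you attach a per-point ``origin'' $m \mid n$, whereas the paper works per fiber, using conjugacy classes of $\cB_0(n)$-closures. The paper explicitly motivates this choice in Remark \ref{rmk:closures:fibers_vs_points}: working with $\cH$-closures rather than $H$-closures lets one stay at the level of conjugacy classes, which is what makes the finite enumeration tractable. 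Your well-definedness claim for the origin via multiplicativity $B_0(n) = \prod_p B_0(p^{v_p(n)})$ is in fact correct (one can verify that the set of admissible $m$ is closed under $\gcd$), but it is doing for individual points what the paper sidesteps by working with fibers.

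There are two genuine gaps. First, in the $\subseteq$ direction you assert that $D_0^\infty(m) = \emptyset$ for $m$ outside the fifteen genus-zero levels ``by Faltings's theorem combined with the classification of cyclic isogenies over $\Q$.'' This does not work as stated: your origin condition concerns degree-$d'$ points for arbitrary $d' \geq 1$, not just rational points on $X_0(m)$, so Faltings and Mazur--Kenku--Momose do not directly apply. The correct argument is group-theoretic: the origin condition forces the adelic image to be contained in a $\cB_0$-closed open subgroup of level exactly $m$, whose associated modular curve must therefore have infinitely many rational points (Corollary \ref{thm:preliminaries:modular_curve_rational_point}); the enumeration then rules out genus $\geq 1$, and the genus-$1$ cases must be excluded by checking that the relevant analytic ranks all vanish, which the paper observes (Remark \ref{rmk:infinite_degrees:all_have_genus_0}) is a separate and surprising computation, not a consequence of Faltings. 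Second, your entire $\supseteq$ direction and the verification of the tabulated $D_0^\infty(m)$ are delegated to a ``secondary obstacle'' at the end. This is where the mathematical content of the theorem actually lives; the paper's Table \ref{tbl:infinite_b_closures:infinite_b0_closures} is the output of a finite but essential computation (enumerating $\cB_0$-closed classes of level in the set $S$, checking determinant, genus, local/global points, and ranks), and no argument in your proposal replaces it. A smaller imprecision: the transitivity you need for the $\supseteq$ step is that of the \emph{stabilizer} of $C_m$ in $\rho_{E,n}(G_\Q)$ acting on the fiber of lifts, not of the full image, since the full image need not fix $C_m$ when $d' > 1$; Lemma \ref{thm:closures:equivalent_degree_formula} phrases this cleanly.
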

	
	The degree of the morphism $X_{0}(n) \to X_{0}(m)$ is easy to compute, and can be found for instance in \cite[Section 3.9]{diamond2005first}. The statement for the modular curves $X_{1}(n)$ is similar, and is as follows.
	
	\begin{theorem} \label{thm:introduction:infinite_x1_degrees}
		Let $n \geq 1$. Define the sets
		\begin{align*}
			D_{1}^{\infty}(1) & = \{1\}, & D_{1}^{\infty}(9) & = \{1, 3, 6\}, \\
			D_{1}^{\infty}(2) & = \{1, 2\}, & D_{1}^{\infty}(10) & = \{1, 2, 4, 5, 10, 20\}, \\
			D_{1}^{\infty}(3) & = \{1, 2, 3\}, & D_{1}^{\infty}(12) & = \{1, 2, 3, 6\}, \\
			D_{1}^{\infty}(4) & = \{1, 3\}, & D_{1}^{\infty}(13) & = \{2, 3, 6, 26, 39, 78\}, \\
			D_{1}^{\infty}(5) & = \{1, 2, 4, 5, 8, 10\}, & D_{1}^{\infty}(16) & = \{2\}, \\
			D_{1}^{\infty}(6) & = \{1, 2\}, & D_{1}^{\infty}(18) & = \{6, 12\}, \\
			D_{1}^{\infty}(7) & = \{1, 3, 6, 7, 18, 21\}, & D_{1}^{\infty}(25) & = \{5, 10, 20, 40\}, \\
			D_{1}^{\infty}(8) & = \{1, 2\},
		\end{align*}
		and $D_{1}^{\infty}(m) = \emptyset$ otherwise. Then, we have that
		\begin{align*}
			& \{d : \exists^{\infty} \, x \in X_{1}(n) \text{ s.t. } \deg(x) = d, j(x) \in \Q\} \\
			& \quad = \bigcup_{m \mid n} \{d \cdot \deg(X_{1}(n) \to X_{1}(m)) : d \in D_{1}^{\infty}(m)\}.
		\end{align*}
	\end{theorem}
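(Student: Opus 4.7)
The plan is to translate the statement into one about $\Gal_{\Q}$-orbits on $n$-torsion points of elliptic curves $E / \Q$, then invoke the classification of Galois images occurring for infinitely many $j$-invariants in $\Q$. A non-cuspidal point $x \in X_{1}(n)$ of degree $d$ with $j(x) \in \Q$ corresponds, away from the exceptional $j$-invariants $j \in \{0, 1728\}$, to a pair $(E, \Gal_{\Q} \cdot P)$ with $E / \Q$ satisfying $j(E) = j(x)$ and $P \in E[n]$ of exact order $n$, whose $\Gal_{\Q}$-orbit has size $d$. Writing $H \leq \GL(\Z[n])$ for the mod-$n$ Galois image of $E$, this is equivalent to $d = [H : \Stab_{H}(P)]$.

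For there to be infinitely many such $x$, one needs $H$ to arise as the mod-$n$ image of infinitely many $E / \Q$; this happens precisely when the associated modular curve $X_{H}$ has infinitely many $\Q$-points, i.e.\ it is genus zero with a $\Q$-point, or an elliptic curve of positive $\Q$-rank. The $\cB_{1}(n)$-closure introduced earlier in the paper packages together subgroups $H$ inducing the same orbit structure on the fibres of $X_{1}(n) \to X(1)$, so the task reduces to enumerating, at each level $m$, the $\cB_{1}(m)$-closures of mod-$m$ images arising from such infinite families, and computing their orbits on order-$m$ elements of $(\Z[m])^{2}$.

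The decomposition over $m \mid n$ expresses the following dichotomy. Given a closed point $x \in X_{1}(n)$ above $y \in X_{1}(m)$, the residue field degree $\deg(x)/\deg(y)$ divides $\deg(X_{1}(n) \to X_{1}(m))$, with equality if and only if $\Gal_{\Q}$ acts transitively on the geometric fibre above $y$. For subgroups $H$ with the maximal possible $\cB_{1}(n)$-closure above a given $\cB_{1}(m)$-closure---the generic case in each infinite family at level $m$---this transitivity holds, so the infinitely many degree-$d$ points of $X_{1}(m)$ with $d \in D_{1}^{\infty}(m)$ lift to infinitely many points of $X_{1}(n)$ of degree exactly $d \cdot \deg(X_{1}(n) \to X_{1}(m))$. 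Summing these contributions over $m \mid n$ produces the right-hand side, and conversely any infinite family of degree-$d$ points on $X_{1}(n)$ descends via $\cB_{1}(n)$-closure to such a family at some minimal level $m \mid n$.

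The main obstacle is the explicit computation of $D_{1}^{\infty}(m)$ at each exceptional $m$ in the list. The prime levels $m \in \{7, 13\}$ and the composite levels $m \in \{16, 18, 25\}$ require particular care, since there non-split Cartan, Borel, and normalizer-type subgroups contribute atypical orbit sizes such as $\{18, 21\}$ at $m = 7$ and $\{26, 39, 78\}$ at $m = 13$, each traced back to a specific genus-zero modular curve with infinitely many $\Q$-points via the Sutherland--Zywina tables. CM elliptic curves must also be handled separately, as their mod-$m$ images sit strictly inside normalizers of Cartan subgroups and must be checked not to introduce further infinite-family degrees beyond those recorded in $D_{1}^{\infty}(m)$.
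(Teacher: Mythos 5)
Your overall strategy matches the paper: translate to group theory via Galois images, identify which images occur for infinitely many $E/\Q$, and compute the resulting orbit structure on $B_{1}(n)\backslash\GL(\Zhat)$. The $m \mid n$ decomposition you describe via transitivity on fibers is essentially the paper's Lemma \ref{thm:closures:equivalent_degree_formula} applied to $\cB_{1}(m)$-closed groups of level $m \mid n$.

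However, there is a genuine gap in the claim that $H$ arises as the mod-$n$ image of infinitely many $E / \Q$ ``precisely when the associated modular curve $X_{H}$ has infinitely many $\Q$-points, i.e. it is genus zero with a $\Q$-point, or an elliptic curve of positive $\Q$-rank.'' The necessity direction is immediate, but the sufficiency direction fails for positive-rank genus 1 curves: the infinitely many rational points of $X_{H}$ may all lie on images of rational points of proper modular subcovers, so that the generic fiber is never achieved. The paper explicitly flags this (remark after Theorem \ref{thm:infinite_degrees:genus_0_occurs_infinitely}, citing \cite[Example 6.1]{rouse2015elliptic}) and only establishes the converse direction for genus 0 curves, via Hilbert irreducibility and the asymptotic count of rational points outside a thin set (Theorem \ref{thm:infinite_degrees:genus_0_occurs_infinitely}). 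Your proposal does not invoke Hilbert irreducibility, nor does it address the genus 1 obstruction. The paper closes the gap not by ruling out genus 1 abstractly but by the empirical Remark \ref{rmk:infinite_degrees:all_have_genus_0}: every $\cB_{1}$-closed conjugacy class with infinitely many rational points turns out, upon computation, to have a genus 0 modular curve (the genus 1 candidates all have analytic rank 0). Without that observation---or a separate argument handling the positive-rank genus 1 case---your ``precisely when'' does not hold, and the characterization of $D_{1}^{\infty}(m)$ is not justified. Your remark about CM curves is also unnecessary: $\Qcm$ is finite, so CM points cannot contribute to degrees occurring for infinitely many $j$-invariants, and the paper simply discards them at the outset.
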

	
	As for $X_{0}(n)$, the degree of the morphism $X_{1}(n) \to X_{1}(m)$ has a simple expression, given for instance in \cite[Section 3.9]{diamond2005first}.
	
	The second part is the classification of the degrees which occur for only finitely many points on the modular curve $X_{0}(n)$ or $X_{1}(n)$. This case is much harder, and we only obtain results conditionally on a wide-ranging conjecture of Zywina, which encompasses both Serre's uniformity conjecture as well as a conjecture on the non-existence of non-cuspidal, non-CM rational points on certain high-genus modular curves. We also restrict our attention to non-cuspidal, non-CM points, setting
	\[
		\Qcm = \left\{\begin{array}{c}0, 1728, -3375, 8000, -32768, 54000, 287496, -884736, -12288000, \\ 16581375, -884736000, -147197952000, -262537412640768000\end{array}\right\}
	\]
	to be the set of CM $j$-invariants in $\Q$. For the modular curves $X_{0}(n)$, this yields the following result.
	
	\begin{theorem} \label{thm:introduction:finite_x0_degrees}
		Suppose that Conjecture \ref{conj:finite_degrees:sl_intersections} holds. Let $n \geq 1$. Define the sets
		\begin{align*}
			D_{0}(7) & = D_{0}^{\infty}(7) \cup \{3\}, & D_{0}(17) & = \{1, 17\}, \\
			D_{0}(11) & = \{1, 11\}, & D_{0}(21) & = \{1, 3, 7, 21\}, \\
			D_{0}(12) & = D_{0}^{\infty}(12) \cup \{9\}, & D_{0}(28) & = \{3, 21\}, \\
			D_{0}(13) & = D_{0}^{\infty}(13) \cup \{6, 8\}, & D_{0}(37) & = \{1, 37\}, \\
			D_{0}(15) & = \{1, 2, 3, 5, 10, 15\},
		\end{align*}
		and $D_{0}(m) = D_{0}^{\infty}(m)$ otherwise. Then, we have that
		\begin{align*}
			& \{\deg(x) : x \in X_{0}(n), j(x) \in \Q \setminus \Qcm\} \\
			& \quad = \bigcup_{m \mid n} \{d \cdot \deg(X_{0}(n) \to X_{0}(m)) : d \in D_{0}(m)\}.
		\end{align*}
	\end{theorem}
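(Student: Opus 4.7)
The plan is to reduce the classification of degrees to a group-theoretic question about Galois representations. For each non-CM elliptic curve $E/\Q$ with $j(E) \in \Q \setminus \Qcm$, let $\rho_E \colon \Gal(\Qbar / \Q) \to \GL_2(\Zhat)$ denote its adelic Galois representation. The non-cuspidal points of $X_0(n)$ lying over $j(E)$ are in bijection with the orbits of $\rho_E(\Gal(\Qbar/\Q))$ acting on $\GL_2(\Z/n\Z) / \cB_0(n)$, and by the argument of \cite{bourdon2019level} the degree of each such point over $\Q$ equals the size of the corresponding orbit. Since twisting $E$ modifies $\rho_E$ only by scalar matrices, which lie in $\cB_0(n)$, this orbit decomposition depends only on $j(E)$. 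The task is therefore, as $G$ ranges over all such Galois images, to determine the set of orbit sizes of $G$ on $\GL_2(\Z/n\Z) / \cB_0(n)$.

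Because only the $\cB_0(n)$-orbit structure of $G$ matters, one may replace $G$ by its $\cB_0(n)$-closure, whose explicit computation for elliptic curves over $\Q$ is a main technical contribution of the paper. Moreover, the covering $X_0(n) \to X_0(m)$ is compatible with these orbits, so every orbit at level $n$ comes from a unique minimal level $m \mid n$, with orbit size equal to $d \cdot \deg(X_0(n) \to X_0(m))$, where $d$ is the orbit size at level $m$. This explains the union-over-divisors shape of the claimed formula and reduces the problem to identifying the correct set $D_0(m)$ at each minimal level $m$.

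For the containment $\supseteq$, I would split into two parts. For degrees $d \in D_0^\infty(m)$, Theorem \ref{thm:introduction:infinite_x0_degrees} already provides infinitely many points of the required degree, so at least one exists, and its preimage in $X_0(n)$ gives a point of degree $d \cdot \deg(X_0(n) \to X_0(m))$. For the sporadic degrees in $D_0(m) \setminus D_0^\infty(m)$, which occur only at $m \in \{7, 11, 12, 13, 15, 17, 21, 28, 37\}$, I would exhibit explicit non-CM elliptic curves $E/\Q$ realizing the required exceptional mod-$m$ Galois image, drawing on the known classification of sporadic rational isogenies, and verify the orbit sizes directly.

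For the containment $\subseteq$, I would invoke Zywina's Conjecture \ref{conj:finite_degrees:sl_intersections}, which constrains the possible adelic images $\rho_E(\Gal(\Qbar/\Q))$ for non-CM $E/\Q$ to a finite explicit list beyond the generic ones that already account for $D_0^\infty(m)$. Enumerating this list and computing the $\cB_0(n)$-closure of each image, one reads off the orbit sizes and checks that none escape $D_0(m)$. The main obstacle is the combinatorial bookkeeping: the list of exceptional images is substantial, and one must verify level by level that no unexpected orbit size arises. The functoriality of $\cB_0(n)$-closures under the maps $X_0(n) \to X_0(m)$ localizes each sporadic contribution to its minimal level, which is what makes the enumeration tractable.
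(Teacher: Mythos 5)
Your high-level strategy is right: reduce to the $\cB_0(n)$-closures of the extended adelic images, use compatibility with the covers $X_0(n) \to X_0(m)$ to localize contributions to minimal levels, and invoke Zywina's conjecture. But there is a genuine gap in your $\subseteq$ direction.

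You assert that Conjecture \ref{conj:finite_degrees:sl_intersections} ``constrains the possible adelic images $\rho_E(G_\Q)$ for non-CM $E/\Q$ to a finite explicit list beyond the generic ones that already account for $D_0^\infty(m)$.'' This misreads the conjecture. In the case where $X_{(\cG_j)_{\agr}}$ has infinitely many rational points, the conjecture constrains only the intersection $\cG_j \cap \SL_2(\Zhat)$ (to the finitely many groups in Table~\ref{tbl:finite_b_closures:infinite_sl_intersections_zywina}); it places no finiteness restriction on the adelic images $\cG_j$ themselves, which remain infinite in number. So you cannot simply ``enumerate this list and compute the $\cB_0(n)$-closure of each image.'' The paper bridges this gap with Lemma~\ref{thm:finite_degrees:b0_contains_agreeable}: the $\cB_0$-closure of any $G$ contains its agreeable closure $G_{\agr}$. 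Consequently, when $X_{(\cG_j)_{\agr}}$ has infinitely many rational points, the inclusion morphism $X_{(\cG_j)_{\agr}} \to X_{\clo{\cB_0(n)}{\cG_j}}$ forces $X_{\clo{\cB_0(n)}{\cG_j}}$ to also have infinitely many rational points, so the closure is constrained to the finite Table~\ref{tbl:infinite_b_closures:infinite_b0_closures} (Corollary~\ref{thm:finite_degrees:b0_closure_infinite_agreeable}) — and these closures only produce the degrees in $D_0^\infty(m)$. Without this containment $G_{\agr} \leq \clo{\cB_0}{G}$, the reduction to a finite check does not go through, and your argument does not close.

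A smaller issue: you suggest realizing the sporadic degrees by ``drawing on the known classification of sporadic rational isogenies,'' but several of the exceptional $j$-invariants in Table~\ref{tbl:finite_b_closures:exceptional_j_invariants} (e.g.\ those at levels $12$, $15$, $21$) do not arise from sporadic isogenies. The $\supseteq$ direction does need the explicit $j$-invariants of that table, but the correct source is Zywina's classification of exceptional agreeable closures, not the isogeny classification. Finally, when you switch to the point-degree statement from the fiber-degree one, you should verify (as the paper does in Theorem~\ref{thm:finite_degrees:x0_fiber_degrees} via Lemmas~\ref{thm:closures:open_closures_preserve_level}, \ref{thm:closures:family_closures}, and \ref{thm:closures:equivalent_degree_formula}) that the level of each closure divides $n$ and that orbit sizes scale exactly by $[B_0(m):B_0(n)]$; this is where the precise union-over-divisors shape is proved, not merely observed.
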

	
	For the modular curves $X_{1}(n)$, assuming the same conjecture of Zywina, we obtain the following.
	
	\begin{theorem} \label{thm:introduction:finite_x1_degrees}
		Suppose that Conjecture \ref{conj:finite_degrees:sl_intersections} holds. Let $n \geq 1$. Define the sets
		\begin{align*}
			D_{1}(7) & = D_{1}^{\infty}(7) \cup \{9\}, & D_{1}(17) & = \{4, 8, 68, 136\}, \\
			D_{1}(11) & = \{5, 55\}, & D_{1}(21) & = \{3, 6, 9, 18, 21, 42, 63, 126\}, \\
			D_{1}(12) & = D_{1}^{\infty}(12) \cup \{9, 18\}, & D_{1}(28) & = \{9, 18, 63, 126\}, \\
			D_{1}(13) & = D_{1}^{\infty}(13) \cup \{36, 48\}, & D_{1}(37) & = \{6, 18, 222, 666\}, \\
			D_{1}(15) & = \{2, 4, 6, 10, 12, 20, 30, 60\},
		\end{align*}
		and $D_{1}(m) = D_{1}^{\infty}(m)$ otherwise. Then, we have that
		\begin{align*}
			& \{\deg(x) : x \in X_{1}(n), j(x) \in \Q \setminus \Qcm\} \\
			& \quad = \bigcup_{m \mid n} \{d \cdot \deg(X_{1}(n) \to X_{1}(m)) : d \in D_{1}(m)\}.
		\end{align*}
	\end{theorem}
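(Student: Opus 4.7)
The plan is to translate the degree of a point $x \in X_{1}(n)$ with non-CM rational $j$-invariant into a purely group-theoretic orbit-size problem, and then use Conjecture \ref{conj:finite_degrees:sl_intersections} to reduce the classification to a finite enumeration. Concretely, fix $j_{0} \in \Q \setminus \Qcm$ and any elliptic curve $E/\Q$ with $j(E) = j_{0}$, and write $G_{E, n} \leq \GL(\Z/n\Z)$ for the image of $\rho_{E, n}$. The fiber of the forgetful map $X_{1}(n) \to X(1)$ over $j_{0}$ is in bijection with the $G_{E, n}$-orbits on $\{\pm P : P \in (\Z/n\Z)^{2} \text{ of exact order } n\}$, and the degree of each such fiber point is the size of the corresponding orbit. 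Since only the $j$-invariant is retained, the relevant invariant is the $\cB_{1}(n)$-closure of $G_{E, n}$, which by construction depends on $j_{0}$ alone and not on the choice of twist.

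First I would carry out the level-by-level decomposition: every point $x \in X_{1}(n)$ projects to $x_{m} \in X_{1}(m)$ for each $m \mid n$, and $\deg(x) = \deg(x_{m}) \cdot [\Q(x) : \Q(x_{m})]$. The second factor is at most $\deg(X_{1}(n) \to X_{1}(m))$ and equals it exactly when $x$ is ``new at level $m$'', meaning that on the corresponding tower of $\cB_{1}(\cdot)$-closures no intermediate jump occurs between levels $m$ and $n$. Declaring $D_{1}(m)$ to record the new degrees contributed at level $m$ reduces the theorem to the determination of each $D_{1}(m)$ individually.

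Next, I would split the analysis into the infinite and finite regimes. The degrees in $D_{1}^{\infty}(m)$ are handled by Theorem \ref{thm:introduction:infinite_x1_degrees}: these correspond to $\cB_{1}(m)$-closures that are generic enough to come from a component $X_{H}$ of genus $0$ with a rational point, and hence from infinitely many $j_{0} \in \Q$. For the extra sporadic degrees in $D_{1}(m) \setminus D_{1}^{\infty}(m)$, I would invoke Conjecture \ref{conj:finite_degrees:sl_intersections}: under it, the adelic image of $\rho_{E}$ for any non-CM $E/\Q$ lies, up to conjugation in $\GL(\Zhat)$, in an explicitly enumerated finite list of subgroups. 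For each such subgroup $H$ and each level $m$, I would compute its reduction $H_{m}$, form the $\cB_{1}(m)$-closure, decompose its action on $\{\pm P\}$ of exact order $m$, and extract the orbit sizes which are new at level $m$. Collating these across all sporadic $H$ produces the additional degrees appearing in $D_{1}(m)$ for $m \in \{7, 11, 12, 13, 15, 17, 21, 28, 37\}$, matching the table in the statement.

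The main obstacle is the bookkeeping in this last step: for each sporadic subgroup and each relevant level $m$ one must compute the $\cB_{1}(m)$-closure, decompose its action on points of exact order $m$ modulo $\pm 1$, and carefully separate the degrees that are genuinely new at level $m$ from those inherited via $X_{1}(m) \to X_{1}(m')$ for a proper divisor $m' \mid m$. A secondary subtlety is the existence check: every subgroup on the conjectural list must actually be realized by at least one non-CM rational $j$-invariant, for which one appeals to the known tables of non-cuspidal non-CM rational points on low-genus modular curves $X_{H}$ (notably for $X_{0}(11)$, $X_{0}(17)$, $X_{0}(37)$ and the sporadic points at levels $15$, $21$ and $37$ documented in the literature). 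Once these ingredients are assembled, the final values of $D_{1}(m)$ follow by direct computation.
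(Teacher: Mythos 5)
Your high-level outline (translate fiber degrees into orbit sizes of $\cG_j$-actions on $B_1(n)\backslash\GL(\Zhat)$, reduce to classifying $\cB_1(n)$-closures, and split into the infinite regime of Theorem \ref{thm:introduction:infinite_x1_degrees} versus a finite list of sporadics) matches the architecture of the paper's proof. But there is a genuine gap in how you invoke Conjecture \ref{conj:finite_degrees:sl_intersections}. You assert that ``under it, the adelic image of $\rho_E$ for any non-CM $E/\Q$ lies, up to conjugation in $\GL(\Zhat)$, in an explicitly enumerated finite list of subgroups.'' That is not what the conjecture says. It only pins down $\cG_j$ itself (to a finite list, Table \ref{tbl:finite_b_closures:exceptional_j_invariants}) when $X_{(\cG_j)_{\agr}}$ has finitely many rational points; when $X_{(\cG_j)_{\agr}}$ has infinitely many rational points — the generic case — it only classifies the intersection $\cG_j \cap \SL(\Zhat)$, and infinitely many distinct adelic images $\cG_j$ are compatible with any fixed such intersection. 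So the ``finite enumeration of subgroups'' you propose to loop over does not exist at the level of adelic images.

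This is exactly the point at which $\cB_1(n)$-closures are fundamentally harder than $\cB_0(n)$-closures, and your proposal does not supply the missing bridge. The paper handles it in Theorem \ref{thm:finite_degrees:b1_closure_infinite_agreeable}: from $\clo{\cB_1}{(\cG_j \cap \SL(\Zhat))} \leq \clo{\cB_1(n)}{\cG_j}$ and the fact that the left-hand side is an open subgroup (Lemma \ref{thm:closures:determinant_gives_sl2_level}), one deduces that $\clo{\cB_1(n)}{\cG_j}$ lies in a computable finite collection of $\cB_1$-closed overgroups (2651 conjugacy classes). After filtering by determinant, genus $0$, and genus $\geq 1$, one is left with 957 classes whose modular curves have positive genus and could still support a non-CM rational point. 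Ruling these out, or cataloguing their finitely many rational $j$-invariants, requires determining $X_{\cH}(\Q)$ for 160 maximal such curves (Theorem \ref{thm:rational_points:main_theorem} and all of Section \ref{sec:rational_points}), including a hand-written Chabauty computation on a genus $3$ curve. None of this appears in your proposal: you replace it with an ``existence check'' appealing to known tables for a handful of classical curves like $X_0(11)$, $X_0(17)$, $X_0(37)$, which is both the wrong direction (you need to rule out unknown $j$-invariants, not merely confirm known ones) and far too small a set of curves. Without this step, your argument does not establish that the tabulated $D_1(m)$ are complete, and the theorem is not proved.

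A smaller point: once one has the classification of $\cB_1(n)$-closures occurring for non-CM rational $j$, passing from Theorem \ref{thm:finite_degrees:x1_fiber_degrees} (fiber degree multisets) to the set-level statement of Theorem \ref{thm:introduction:finite_x1_degrees} requires more bookkeeping than a slogan about ``new degrees at level $m$''; in particular one needs Lemma \ref{thm:closures:family_closures} to control how the closure stabilizes along the tower of levels $m \mid n$, and Lemma \ref{thm:closures:equivalent_degree_formula} to convert orbit sizes at level $m$ into orbit sizes at level $n$. Your decomposition gestures at this but does not make it precise.
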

	
	In addition to computing the possible degrees of the points with rational $j$-invariant on the modular curves $X_{0}(n)$ and $X_{1}(n)$, we go a step further by computing the degree of the fibers of these modular curves with rational $j$-invariant. That is to say, rather than determining the degree $\deg(x)$ as $x$ varies through all closed points $x$ on the desired modular curve with $j(x) \in \Q$, we determine the multiset
	\[
		\ldblbrace \deg(x) : x \in X_{i}(n), j(x) = j \rdblbrace,
	\]
	as $j$ varies through $\Q \setminus \Qcm$. The results we obtain are analogous, and in fact imply, the four theorems described above. In the interest of brevity, we omit their statements here, especially as the sets involved are much larger. For more details, see Theorems \ref{thm:infinite_degrees:infinite_fiber_degrees}, \ref{thm:finite_degrees:x0_fiber_degrees} and \ref{thm:finite_degrees:x1_fiber_degrees}.
	
	\subsection{\texorpdfstring{$\cH$}{H}-closures} \label{sec:introduction:closures}
	
	Let $E$ be an elliptic curve defined over $\Q$, and let $\alpha : \varprojlim_{n \geq 1} E[n] \to \Zhat^{2}$ be an isomorphism of $\Zhat$-modules, or equivalently, a compatible choice of bases of $E[n]$, for all $n \geq 1$. Recall that the adelic Galois representation associated to $E$ and $\alpha$ is the group homomorphism
	\begin{align*}
		\rho_{E, \alpha} : G_{\Q} & \to \GL(\Zhat) \\
		\sigma & \mapsto \alpha \circ \sigma^{-1} \circ \alpha^{-1},
	\end{align*}
	where $G_{\Q}$ is the absolute Galois group $\Gal(\Qbar / \Q)$.
	
	Given $n \geq 1$, the degrees of the closed points of the modular curve $X_{0}(n)$ can be determined from group-theoretic properties of the image of the adelic Galois representation $\rho_{E, \alpha}(G_{\Q})$. Let $B_{0}(n)$ be the subgroup of $\GL(\Zhat)$ defined by
	\[
		B_{0}(n) = \left\{\begin{bsmallmatrix} a & b \\ c & d \end{bsmallmatrix} \in \GL(\Zhat) : c \equiv 0 \pmod{n}\right\}.
	\]
	As shown in Section \ref{sec:preliminaries:modular_curve_point_degrees}, if $j(E) \notin \{0, 1728\}$, then the closed points of the modular curve $X_{0}(n)$ with $j$-invariant equal to $j(E)$ are in bijection with the orbits of the set of right cosets $B_{0}(n) \backslash {\GL(\Zhat)}$ under the action of $\rho_{E, \alpha}(G_{\Q})$ by right multiplication. The degree of each closed point is then equal to the size of the corresponding orbit. 
	
	Therefore, in order to determine the degrees of the points on the modular curve $X_{0}(n)$ with rational $j$-invariant, it is sufficient to classify the possible images $\rho_{E, \alpha}(G_{\Q})$ of the adelic Galois representation associated to elliptic curves $E/\Q$. This question, often referred to as Mazur's Program B in the literature, has been well-studied. However, at the present moment, there is no complete classification of such adelic images $\rho_{E, \alpha}(G_{\Q})$ for all elliptic curves $E/\Q$.
	
	To circumvent this issue, we note that instead of understanding the complete adelic image $\rho_{E, \alpha}(G_{\Q})$, it suffices to understand how the adelic image $\rho_{E, \alpha}(G_{\Q})$ acts on the set of right cosets $B_{0}(n) \backslash {\GL(\Zhat)}$. To this end, given a subgroup $H$ of $\GL(\Zhat)$, we say that two subgroups $G$ and $G'$ of $\GL(\Zhat)$ are $\cH$-equivalent if the sets of orbits of $H \backslash {\GL(\Zhat)}$ under $G$ and $G'$ are the same. Similarly, we say that $G$ and $G'$ are $H$-equivalent if the orbits of the identity coset of $H \backslash {\GL(\Zhat)}$ under $G$ and $G'$ are the same. The choice of notation is explained in Sections \ref{sec:preliminaries:gl2} and \ref{sec:closures}. One then finds that the degrees of the closed points on the modular curve $X_{0}(n)$ with $j$-invariant equal to $j(E)$ can be deduced from knowing subgroups $B_{0}(n)$-equivalent to each of the conjugates of $\rho_{E, \alpha}(G_{\Q})$. Moreover, the degree of the fiber of $X_{0}(n)$ with $j$-invariant $j(E)$ can be computed from any subgroup $\cB_{0}(n)$-equivalent to $\rho_{E, \alpha}(G_{\Q})$.
	
	Thus, in order to determine the degrees of the fibers of the modular curve $X_{0}(n)$ with rational $j$-invariant, it suffices to determine a subgroup $\cB_{0}(n)$-equivalent to the image of the adelic Galois representation $\rho_{E, \alpha}(G_{\Q})$ for all elliptic curves $E/\Q$. It so happens that there is a maximal such subgroup, which we call the $\cB_{0}(n)$-closure. The bulk of this paper is therefore concerned with classifying the $\cB_{0}(n)$- and $\cB_{1}(n)$-closures of the adelic images $\rho_{E, \alpha}(G_{\Q})$ of non-CM elliptic curves $E/\Q$.
	
	This classification is done in two parts. Firstly, we determine the $\cB_{0}(n)$- and $\cB_{1}(n)$-closures which occur infinitely often, as $E$ varies through all non-CM elliptic curves over $\Q$. This is done unconditionally, and serves as the basis of the proofs of Theorems \ref{thm:introduction:infinite_x0_degrees} and \ref{thm:introduction:infinite_x1_degrees}.
	
	Secondly, we determine the $\cB_{0}(n)$- and $\cB_{1}(n)$-closures which occur for only finitely many isomorphism classes of elliptic curves $E / \Q$. This is much harder, and we rely on the conjecture of Zywina mentioned above to achieve this. This computation also involves determining the set of rational points on 160 modular curves of positive genus. This second classification then yields the proofs of Theorems \ref{thm:introduction:finite_x0_degrees} and \ref{thm:introduction:finite_x1_degrees}.
	
	The distinction between $H$-equivalence and $\cH$-equivalence of subgroups of $\GL(\Zhat)$ is one of the primary reasons for considering the degrees of the fibers on the modular curves $X_{0}(n)$ and $X_{1}(n)$ rather than the degrees of the points. Indeed, as explained above, computing the degrees of the points naturally leads one to considering say $B_{0}(n)$-closures, while computing the degrees of the fibers leads to considering the $\cB_{0}(n)$-closure. While $\cB_{0}(n)$-equivalence is a stronger constraint than $B_{0}(n)$-equivalence, and hence is possibly harder to determine, computing $\cB_{0}(n)$-closures allows us to work with conjugacy classes of subgroups of $\GL(\Zhat)$, rather than individual subgroups. This greatly simplifies the calculations done in this paper. For more details, see Remark \ref{rmk:closures:fibers_vs_points}.
	
	\subsection{Connections to other work} \label{sec:introduction:connections}
	
	While defined in the context of the previous section, the notions of $H$-equivalence and $\cH$-equivalence have connections to related topics about closed points on modular curves. In particular, these notions are the correct group-theoretic conditions encoding many existing properties of points on modular curves.
	
	For instance, given two open subgroups $G$ and $H$ of $\GL(\Zhat)$, with $-I \in G \leq H$, there is an inclusion morphism of modular curves $f : X_{G} \to X_{H}$. Fixing some elliptic curve $E/\Q$ with $j(E) \notin \{0, 1728\}$, consider a closed point $x \in X_{G}$ with $j$-invariant equal to $j(E)$. It is a natural question to understand when $x$ is the pullback of its image $f(x)$, or in other words, when one has the equality
	\[
		\deg(x) = \deg(f) \cdot \deg(f(x)).
	\]
	Group-theoretically, this occurs precisely when the subgroups $G$ and $H$ are $\rho_{E, \alpha}(G_{\Q})$-equivalent, where $\alpha$ is closely related to the point $x$; see Corollary \ref{thm:closures:primitive_points_connection}.
	
	Going a step further, define a primitive point on a modular curve $X_{G}$ to be a closed point $x \in X_{G}$ such that there does not exist another modular curve $X_{H}$, with $G \leq H$, such that $\deg(x) = \deg(f) \cdot \deg(f(x))$, where $f$ is the inclusion morphism $f : X_{G} \to X_{H}$. In other words, the closed point $x$ is not the pullback of a closed point on some other modular curve. This mirrors the definition of primitive points given in \cite[Section 5]{bourdon2025towards} for the modular curves $X_{1}(n)$. By the above, the closed point $x \in X_{G}$ is primitive if and only if there are no overgroups of $G$ which are $\rho_{E, \alpha}(G_{\Q})$-equivalent to it; we say that such a group $G$ is $\rho_{E, \alpha}(G_{\Q})$-closed. More generally, for any closed point $x \in X_{G}$, there is a unique modular curve $X_{H}$ such that $x$ is the pullback of a primitive point on $X_{H}$. The subgroup $H$ is then equal to the $\rho_{E, \alpha}(G_{\Q})$-closure of $G$ defined in the previous section. To obtain the notion of primitive points defined in \cite[Section 5]{bourdon2025towards}, one restricts to solely considering the modular curves $X_{1}(n)$. These same definitions and results carry through to this setting, thus providing a group-theoretic approach to understanding and computing primitive points.
	
	The notions of $H$-equivalence and $H$-closure can also be applied to the study of isolated points on modular curves. Roughly speaking, a closed point $x$ of degree $d$ on a nice curve $C$ defined over a number field $k$ is said to be isolated if it does not belong to an infinite family of degree $d$ points of $C$ parametrized by some geometric object. For a more precise definition, we refer the reader to \cite{bourdon2019level, terao2025isolated, viray2025isolated}. The study of isolated points on modular curves has garnered much attention recently, and particularly the study of the isolated points on the modular curves $X_{1}(n)$.
	
	As for the case of primitive points, the notion of $H$-equivalence provides the correct group-theoretic formulation for understanding existing notions related to isolated points on modular curves. In particular, we show that an existing argument in the literature, used for instance in \cite[Theorem 38]{bourdon2025towards} and \cite[Theorem 5.3]{terao2025isolated}, can be generalized to a large class of subgroups characterized by $H$-equivalence. This allows us to recontextualize the results of the aforementioned citations as specific cases of this more general result.
	
	Moreover, this greater generality, along with the classification of $\cB_{1}(n)$-closures of adelic Galois images of elliptic curves over $\Q$ explained in the previous section, allows us to compute the set of $j$-invariants of isolated points with rational $j$-invariant on the modular curves $X_{1}(n)$. A conjectural classification of such $j$-invariants has already been put forward in \cite[Conjecture 4]{bourdon2025towards}, and we provide further evidence for this conjecture by showing that it is implied by the conjecture of Zywina mentioned in the previous sections. More precisely, we show the following.
	
	\begin{theorem} \label{thm:introduction:x1_isolated_j_invariants}
		Suppose that Conjecture \ref{conj:finite_degrees:sl_intersections} holds. Fix $n \geq 1$, and let $x \in X_{1}(n)$ be a non-cuspidal, non-CM isolated closed point with $j(x) \in \Q$. Then
		\[
			j(x) \in \left\{-\frac{140625}{8}, \frac{351}{4}, -9317, -162677523113838677\right\}.
		\]
		In particular, \cite[Conjecture 4]{bourdon2025towards} holds.
	\end{theorem}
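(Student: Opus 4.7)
The plan is to combine the group-theoretic framework developed earlier in the paper with the conditional classification of $\cB_1(n)$-closures of adelic Galois images of non-CM elliptic curves over $\Q$. First, I would invoke the generalized argument for isolated points, which the paper recasts in terms of $\cH$-equivalence and $\cH$-closures, extending the strategy used in \cite[Theorem 38]{bourdon2025towards} and \cite[Theorem 5.3]{terao2025isolated}. The upshot is that a non-cuspidal, non-CM closed point $x \in X_{1}(n)$ with $j(x) \in \Q$ can only be isolated if its fiber is witnessed by a $\cB_{1}(n)$-closure that is realised by only finitely many $\Qbar$-isomorphism classes of elliptic curves $E/\Q$. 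If instead the $\cB_{1}(n)$-closure of $\rho_{E,\alpha}(G_{\Q})$ occurred for infinitely many such $E$, the corresponding fibers would form an infinite family of closed points of the same degree, obstructing isolation.

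Second, I would feed this reduction into the conditional classification of finite $\cB_{1}(n)$-closures established under Conjecture \ref{conj:finite_degrees:sl_intersections}, together with its fiber-degree refinement (Theorem \ref{thm:finite_degrees:x1_fiber_degrees}). This produces an explicit finite list of candidate pairs $(n, j)$ with $j \in \Q \setminus \Qcm$ for which an isolated point could potentially exist on $X_{1}(n)$, together with the multiset of fiber degrees above each such $j$.

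Third, for each candidate $j$-invariant on the finite list I would verify directly whether the corresponding closed point is truly isolated, or merely sporadic. In the $\cH$-closure language, primitivity of $x$ corresponds exactly to $\rho_{E,\alpha}(G_{\Q})$-closedness of the defining open subgroup of $\GL(\Zhat)$, so the primitive closed points in each fiber can be read off group-theoretically. For the remaining primitives I would rule out membership in a $\PP^{1}$-parametrised family or in the image of a positive-rank map from a quotient of the Jacobian, using the standard criteria on the relevant $X_{1}(n)$. The four $j$-invariants listed are precisely those that survive every step. Since the resulting set coincides with the prediction of \cite[Conjecture 4]{bourdon2025towards}, the second assertion of the theorem follows immediately.

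The main obstacle will be this final case-by-case verification: among the candidate $j$-invariants yielded by the classification, several will correspond to sporadic but non-isolated closed points, and separating these from the genuinely isolated ones requires either concrete computation on the relevant modular curves (ranks of Jacobian quotients, gonal maps, pullbacks from $X_{0}(n)$ or from smaller-level $X_{1}(m)$) or a sharper group-theoretic criterion detecting $\PP^{1}$-isolation and $\AA$-isolation from the $\cB_{1}(n)$-closure data alone. The latter would be cleaner, but the former suffices and is the route I would take, since the list produced in the second step is small and explicit.
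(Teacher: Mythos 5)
Your overall skeleton matches the paper's: push isolation through the $\cB_1(n)$-closure, appeal to the conditional classification of $\cB_1(n)$-closures of adelic images (Theorem \ref{thm:finite_degrees:x1_fiber_degrees}), and then settle the resulting finite list of candidate $j$-invariants by direct verification (the paper invokes the algorithm of \cite{bourdon2025towards}, which is essentially the case-by-case check you describe, and handles the one spurious candidate $-\tfrac{882216989}{131072}$ separately using \cite[Section 9.0.2]{bourdon2025towards}).

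However, the justification you give for the key reduction step is not valid. You claim that if $\clo{\cB_1(n)}{\cG_j}$ occurred for infinitely many $j$, ``the corresponding fibers would form an infinite family of closed points of the same degree, obstructing isolation.'' This does not work: the existence of infinitely many degree-$d$ closed points on $X_1(n)$ does not, by itself, preclude any particular one of them from being isolated --- isolation is a statement about geometric parametrization of the specific point (by $\PP^1$ or by a positive-dimensional translate in the Jacobian), not a cardinality condition on the degree-$d$ locus. Finitely many isolated points can coexist with an infinite family of non-isolated points of the same degree. The paper's argument instead proceeds as follows. Theorem \ref{thm:isolated_points:equivalent_isolated} shows that if $x$ is isolated on $X_1(n)$, then the point $y$ on $X_{\clo{\cB_1(n)}{G_{E,\alpha}}}$ below it is also isolated; since $G_{E,\alpha} \leq \clo{\cB_1(n)}{G_{E,\alpha}}$, the point $y$ is in fact \emph{rational}. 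If $\clo{\cB_1(n)}{\cG_{j}}$ is in Table \ref{tbl:infinite_b_closures:infinite_b1_closures}, then by the empirically-observed (and stated as surprising in Remark \ref{rmk:infinite_degrees:all_have_genus_0}) fact that every such conjugacy class has genus-$0$ modular curve, the rational point $y$ lives on a copy of $\PP^1_\Q$ and is therefore $\PP^1$-parametrized by \cite[Theorem 2.17]{terao2025isolated}, contradicting isolation. Without this genus-$0$ observation (or at least the weaker observation that $X_{\clo{\cB_1(n)}{G_{E,\alpha}}}$ has infinitely many rational points, forcing genus $0$ or positive-rank genus $1$, so that $y$ is parametrized in either case), the reduction does not go through, and your ``infinite family of the same degree'' phrasing papers over the gap.
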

	
	\subsection{Structure of the paper}
	
	The structure of the paper is as follows. In Section \ref{sec:preliminaries}, we recall the definition and properties of modular curves and their associated objects. Of particular importance, in Corollary \ref{thm:preliminaries:modular_curve_point_degrees}, we give an expression for the degrees of points on modular curves in terms of Galois representations which will be used throughout. In Section \ref{sec:closures}, we define the notions of $H$-equivalence and $\cH$-equivalence briefly explained in Section \ref{sec:introduction:closures}, and undertake a detailed study of their properties. In Section \ref{sec:infinite_degrees}, we classify the degrees of the fibers with rational $j$-invariant on the modular curves $X_{0}(n)$ and $X_{1}(n)$ which occur infinitely often, while Section \ref{sec:finite_degrees} is dedicated to the classification of the degrees of the fibers with rational $j$-invariant on the modular curves $X_{0}(n)$ and $X_{1}(n)$ which occur finitely often. In Section \ref{sec:rational_points}, we compute the rational points on the 160 modular curves mentioned above, which forms a key part of the argument of Section \ref{sec:finite_degrees}. Finally, in Section \ref{sec:isolated_points}, we explore the link between $H$-equivalence and isolated points on modular curves, which culminates in the proof of Theorem \ref{thm:introduction:x1_isolated_j_invariants}.
	
	\subsection{Code}
	
	The computations throughout this paper were performed using the computer algebra system \texttt{Magma} \cite{magma}. The code can be found in the following GitHub repository:
	\begin{center}
		\url{https://github.com/kenjiterao/degrees-x0-x1}
	\end{center}
	
	\subsection{Acknowledgments}
	
	The author thanks Abbey Bourdon, Travis Morrison and James Rawson for helpful and insightful conversations. The author also thanks Samir Siksek for their guidance and comments on this manuscript. This work was supported by the Additional Funding Programme for Mathematical Sciences, delivered by EPSRC (EP/V521917/1) and the Heilbronn Institute for Mathematical Research.
	
	\section{Preliminaries} \label{sec:preliminaries}
	
	In this section, we recall some basic definitions about modular curves and subgroups of $\GL(\Zhat)$ which will be used throughout the remainder of this paper. Most of the discussion of modular curves will follow the exposition and definitions used in \cite{terao2025isolated}, to which we refer the reader to for more details and additional references.
	
	\subsection{Points on curves} \label{sec:preliminaries:points_on_curves}
	
	Let $C$ be a smooth projective curve defined over a number field $k$; that is to say, let $C$ be a smooth, projective, integral, one-dimensional scheme over $k$. Note that while we require the curve $C$ to be integral, we do not require it to be geometrically integral. Given a closed point $x \in C$, we define the degree of $x$, denoted $\deg(x)$, to be the degree of the field extension
	\[
		\deg(x) = [k(x) : k].
	\]
	Recalling that a closed point $x \in C$ corresponds to the Galois orbit of a geometric point $y \in C(\overline{k})$, the degree of $x$ is equivalently the size of the Galois orbit of $y$. Given such a geometric point $y \in C(\overline{k})$, we will denote by $\overline{y} \in C$ the closed point corresponding to the Galois orbit of $y$.
	
	Let $C \to \Spec K \to \Spec k$ be the Stein factorization of $C$, where the morphism $C \to \Spec K$ makes $C$ into a geometrically integral curve over the number field $K$. In particular, if $C/k$ is geometrically integral, we have that $K = k$. As $C$ is a smooth curve, for any closed point $x \in C$, we have that
	\[
		\deg(x) = [K(x) : K] [K : k].
	\]
	In particular, the degree of the field extension $K/k$ divides the degree of the closed point $\deg(x)$. Hence, if $C$ is geometrically disconnected, then $C(k) = \emptyset$.
	
	Let $f : C \to D$ be a finite locally free morphism of degree $d$ between smooth projective curves defined over $k$. Let $y \in D$ be a closed point, and recall that the fiber of $y$ under $f$ is the preimage $f^{-1}(y) \subset C$. We define the degree of the fiber of $y$, denoted $\ddeg_{C}(y)$, to be the multiset
	\[
		\ddeg_{C}(y) = \ldblbrace \deg(x) : x \in C, f(x) = y \rdblbrace.
	\]
	We note that, as $f$ is a finite, locally free morphism of degree $d$, the fiber $f^{-1}(y)$ is a finite union of closed points of $C$, and
	\[
		\sum_{k \in \ddeg_{C}(y)} k = d.
	\]
	
	\subsection{Subgroups of \texorpdfstring{$\GL(\Zhat)$}{GL\_2(Zhat)}} \label{sec:preliminaries:gl2}
	
	Throughout the remainder of this paper, we will heavily rely on subgroups of $\GL(\Zhat)$. As a result, we recall some basic definitions and properties relating to this group.
	
	We denote by $\Zhat$ the ring of profinite integers $\Zhat = \varprojlim_{n \geq 1} \Z[n]$. The group $\GL(\Zhat)$ is a profinite group, with $\GL(\Zhat) = \varprojlim_{n \geq 1} \GL(\Z[n])$. As a result, we obtain surjective projection maps $\pi_{n} : \GL(\Zhat) \to \GL(\Z[n])$ for every $n \geq 1$. By the Chinese reminder theorem, we have that
	\[
		\GL(\Zhat) \cong \prod_{p} \GL(\Z_{p}),
	\]
	where the product ranges over all primes $p$.
	
	We equip $\GL(\Zhat)$ with the profinite topology, and say that subgroups of $\GL(\Zhat)$ are open or closed if they are open or closed in the profinite topology respectively. As is the case in all profinite groups, a subgroup of $\GL(\Zhat)$ is open if and only if it is both closed and has finite index. Given an open subgroup $G$ of $\GL(\Zhat)$, there exists an integer $n \geq 1$ such that $\ker(\pi_{n}) \leq G$. The least such $n$ is called the $\GL$-level, or simply level, of $G$.
	
	For any integer $n \geq 1$, there is a natural bijection between the set of open subgroups of $\GL(\Zhat)$ with level dividing $n$ and the set of subgroups of $\GL(\Z[n])$, given by $G \mapsto \pi_{n}(G)$. As such, throughout the remainder of the paper, we will freely switch between these two equivalent notions.
	
	Given a subgroup $G$ of $\GL(\Zhat)$ and an integer $n \geq 1$, we denote by $G(n)$ the product $G \ker \pi_{n}$, or equivalently, the preimage $\pi_{n}^{-1}(\pi_{n}(G))$. This is an open subgroup of $\GL(\Zhat)$ whose level divides $n$.
	
	The subgroup $\SL(\Zhat)$ is a closed subgroup of $\GL(\Zhat)$. We equip $\SL(\Zhat)$ with the subspace topology, which coincides with the profinite topology arising from the identification $\SL(\Zhat) = \varprojlim_{n \geq 1} \SL(\Z[n])$. We define open and closed subgroups of $\SL(\Zhat)$ as was done for $\GL(\Zhat)$. In the same way, given an open subgroup $G$ of $\SL(\Zhat)$, we define the $\SL$-level of $G$ to be the least $n \geq 1$ such that $G$ contains the kernel of the projection map $\SL(\Zhat) \to \SL(\Z[n])$. By extension, we define the $\SL$-level of an open subgroup $G$ of $\GL(\Zhat)$ to be the $\SL$-level of the open subgroup $G \cap \SL(\Zhat) \leq \SL(\Zhat)$.
	
	Given a subgroup $G$ of $\GL(\Zhat)$, we will denote by $\cG$ the conjugacy class of $G$ in $\GL(\Zhat)$; that is to say, the set $\{g G g^{-1} : g \in \GL(\Zhat)\}$. Similarly, we will use $\cH$ and $\cB$ to denote the conjugacy classes of subgroups $H$ and $B$ respectively. If $G$ is additionally an open subgroup of $\GL(\Zhat)$ of level $n$, the conjugates $g G g^{-1}$ are also open subgroups of $\GL(\Zhat)$ of level $n$, for all $g \in \GL(\Zhat)$. Moreover, conjugation commutes with the identification between open subgroups of $\GL(\Zhat)$ of level dividing $n$ and subgroups of $\GL(\Z[n])$, in the sense that the conjugacy class $\cG$ is precisely the preimage of the conjugacy class of $\pi_{n}(G)$ in $\GL(\Z[n])$. Given two conjugacy classes $\cG$ and $\cG'$ of subgroups of $\GL(\Zhat)$, we write $\cG \leq \cG'$ if there exist two representatives $G \in \cG$ and $G' \in \cG'$ such that $G \subseteq G'$.
	
	There are a number of properties of $G$ which are invariant under conjugation by elements of $\GL(\Zhat)$: for instance, the value of the index $[\GL(\Zhat) : G]$ and the value of the level of $G$, if $G$ is open. By abuse of notation, we will designate the corresponding property of the conjugacy class $\cG$ to be that of $G$, and employ the same notation as for $G$. For instance, we will write the index $[\GL(\Zhat) : \cG]$ of a conjugacy class $\cG$ of open subgroups of $\GL(\Zhat)$ to mean the index $[\GL(\Zhat) : G]$ of any representative $G \in \cG$.
	
	Throughout the remainder of the paper, we will restrict ourselves to using the left action of $\GL(\Zhat)$ on $\Zhat^{2}$ defined by left multiplication on column vectors. In particular, the product $g h$ of two elements $g, h \in \GL(\Zhat)$ corresponds to the composition $g \circ h$ of linear maps, rather than the composition $h \circ g$. For consistency, we therefore define the product $\sigma \eta$ of elements of a symmetric group $\Sym(\Omega)$ to be the composition $\sigma \circ \eta$. A group homomorphism $G \to \Sym(\Omega)$ thus corresponds to a left action of the group $G$ on the set $\Omega$, while a right action gives rise to a homomorphism
	\[
		G^{\op} \to \Sym(\Omega),
	\]
	where $G^{\op}$ denotes the opposite group of $G$. This convention mirrors that used in the majority of the literature on modular curves, and importantly, in \cite{rouse2022ell-adic} and the LMFDB \cite{lmfdb}. However, it is opposite to the convention used in \cite{rouse2015elliptic} and in group theory more broadly.
	
	Let $G$ and $H$ be subgroups of $\GL(\Zhat)$, and denote by $H \backslash {\GL(\Zhat)}$ the set of right cosets of $H$ in $\GL(\Zhat)$. There is a right action of $\GL(\Zhat)$ on the set $H \backslash {\GL(\Zhat)}$ given by right multiplication, which restricts to give a right action of $G$ on $H \backslash {\GL(\Zhat)}$. Given a coset $Hg \in H \backslash {\GL(\Zhat)}$, we will denote by $Hg \cdot G$ the corresponding orbit under the action of $G$. The set of orbits $(H \backslash {\GL(\Zhat)}) / G$ is in bijection with the set of double cosets $H \backslash {\GL(\Zhat)} / G$. For readability, we will therefore use the latter notation to denote the former.
	
	Given an element $k \in \GL(\Zhat)$, we have an isomorphism of right actions given by the commutative diagram
	\[\begin{tikzcd}
		G^{\op} \ar[r] \ar[d, "\cong"] & \Sym(H \backslash {\GL(\Zhat)}) \ar[d, "\cong"] \\
		(k^{-1} G k)^{\op} \ar[r] & \Sym(H \backslash {\GL(\Zhat)}),
	\end{tikzcd}\]
	where the top and bottom arrows are given by the right actions of $G$ and $k^{-1} G k$ respectively on the set $H \backslash {\GL(\Zhat)}$ as defined above, the left arrow is induced by the conjugation map $G \to k^{-1} G k$, and the right arrow is induced by the bijection of sets $Hg \mapsto Hgk$. By abuse of notation, we will therefore define the action of the conjugacy class $\cG$ on the set $H \backslash {\GL(\Zhat)}$ to be the action of $G$ on the set $H \backslash {\GL(\Zhat)}$ for some fixed representative $G \in \cG$. As before, we will then write $H \backslash {\GL(\Zhat)} / \cG$ to denote the set of orbits $H \backslash {\GL(\Zhat)} / G$. We will not however employ the notation $Hg \cdot \cG$, as the orbit of the specific coset $Hg$ may depend on the representative $G \in \cG$ chosen.
	
	Suppose moreover that $H$ is an open subgroup of $\GL(\Zhat)$ of level $n$. The set of cosets $H \backslash {\GL(\Zhat)}$ is then in bijection with the set of cosets $\pi_{n}(H) \backslash {\GL(\Z[n])}$, and there is a morphism of group actions given by the commutative diagram
	\begin{equation}
		\begin{tikzcd}
			G^{\op} \ar[r] \ar[d, twoheadrightarrow] & \Sym(H \backslash {\GL(\Zhat)}) \ar[d, "\cong"] \\
		\pi_{n}(G)^{\op} \ar[r] & \Sym(\pi_{n}(H) \backslash {\GL(\Z[n])}),
		\end{tikzcd}
		\label{eq:preliminaries:finite_group_action_correspondence}
	\end{equation}
	where the bottom arrow is induced by the right action of $\pi_{n}(G)$ on $\pi_{n}(H) \backslash {\GL(\Z[n])}$ given by right multiplication.
	
	Finally, we define two families of open subgroups of $\GL(\Zhat)$, as follows. For any integer $n \geq 1$, we let
	\begin{align*}
		B_{0}(n) & = \pi_{n}^{-1} (\{\begin{bsmallmatrix}a & b \\ 0 & c\end{bsmallmatrix} \in \GL(\Z[n]) : a, c \in \Z*[n], b \in \Z[n]\}), \\
		B_{1}(n) & = \pi_{n}^{-1} (\{\begin{bsmallmatrix}a & b \\ 0 & c\end{bsmallmatrix} \in \GL(\Z[n]) : a \in \{\pm 1\}, b \in \Z[n], c \in \Z*[n]\}).
	\end{align*}
	The subgroups $B_{0}(n)$ and $B_{1}(n)$ are both open subgroups of $\GL(\Zhat)$ of level exactly $n$. In addition, we define the closed subgroups
	\begin{align*}
		B_{0} = \bigcap_{n \geq 1} B_{0}(n), && B_{1} = \bigcap_{n \geq 1} B_{1}(n).
	\end{align*}
	We note that the subgroups $B_{0}(n)$ and $B_{1}(n)$ coincide precisely with the notation $G(n)$, defined above for subgroups $G$ of $\GL(\Zhat)$, with $G$ respectively $B_{0}$ and $B_{1}$.
	
	\subsection{Galois representations of elliptic curves} \label{sec:preliminaries:galois_representations}
	
	Let $E / k$ be an elliptic curve defined over a number field $k$. Following \cite[Section 4.5]{terao2025isolated}, we define a profinite level structure on $E$ to be an isomorphism of $\Zhat$-modules
	\[
		\alpha : \varprojlim_{n \geq 1} E[n] \to \Zhat^{2},
	\]
	or equivalently, a compatible choice of bases of $E[n]$ for all $n \geq 1$.
	
	Given a profinite level structure $\alpha$ on $E$, we define the adelic Galois representation associated to $E$ and $\alpha$ to be the group homomorphism
	\begin{align*}
		\rho_{E, \alpha} : G_{k} & \to \GL(\Zhat) \\
		\sigma & \mapsto \alpha \circ \sigma \circ \alpha^{-1},
	\end{align*}
	where $G_{k}$ denotes the absolute Galois group $\Gal(\overline{k} / k)$. The image $\rho_{E, \alpha}(G_{k})$ is called the adelic Galois image associated to $E$ and $\alpha$. We note that if $E$ is a non-CM elliptic curve, then the adelic Galois image $\rho_{E, \alpha}(G_{k})$ is an open subgroup of $\GL(\Zhat)$.
	
	We let $A_{E, \alpha}$ be the subgroup
	\[
		A_{E, \alpha} = \{\alpha \circ \varphi \circ \alpha^{-1}: \varphi \in \Aut(E_{\Qbar})\} \leq \GL(\Zhat),
	\]
	and call it the automorphism image associated to $E$ and $\alpha$. We note that, if $j(E) \notin \{0, 1728\}$, then $A_{E, \alpha} = \{\pm I\}$. The automorphism image $A_{E, \alpha}$ is normalized by the adelic Galois image $\rho_{E, \alpha}(G_{\Q(j)})$, and so their product $\rho_{E, \alpha}(G_{\Q(j)}) A_{E, \alpha}$ is a subgroup of $\GL(\Zhat)$. We write
	\[
		G_{E, \alpha} = \rho_{E, \alpha}(G_{\Q(j)}) A_{E, \alpha},
	\]
	and call this the extended adelic Galois image associated to $E$ and $\alpha$.
	
	Given a second profinite level structure $\beta$ on $\alpha$, we naturally have that
	\[
		\rho_{E, \beta}(G_{k}) = (\beta \circ \alpha^{-1}) \rho_{E, \alpha}(G_{k}) (\beta \circ \alpha^{-1})^{-1}.
	\]
	Viewing $\beta \circ \alpha^{-1}$ as an element of $\GL(\Zhat)$, it follows that the conjugacy class of the adelic Galois image $\rho_{E, \alpha}(G_{k})$ is independent of the profinite level structure $\alpha$. We denote this conjugacy class $\cG_{E}$, and call it the adelic Galois image of $E$.
	
	Let $j \in \Qbar$ be an algebraic number, and let $E$ be an elliptic curve defined over $\Q(j)$ such that $j(E) = j$. As before, let $\alpha$ be a profinite level structure on $E$. By a similar argument, the conjugacy class of the extended adelic Galois image $G_{E, \alpha}$ does not depend on the choice of profinite level structure $\alpha$, nor on the twist of $E$, that is to say, the choice of $E / \Q(j)$ with $j(E) = j$. We denote this conjugacy class $\cG_{j}$, and call it the extended adelic Galois image of $j$.
	
	\subsection{Modular curves} \label{sec:preliminaries:modular_curves}
	
	Let $H$ be an open subgroup of $\GL(\Zhat)$ of level $n$. We define the modular curves $X_{H}$ and $Y_{H}$ to be the generic fibers of the coarse moduli spaces of the algebraic stacks $\mathcal{M}_{\pi_{n}(H)}$ and $\mathcal{M}_{\pi_{n}(H)}^{0}$ respectively, which parametrize generalized elliptic curves, or respectively, elliptic curves, with $\pi_{n}(H)$-level structure. For more information on the definition of the stacks $\mathcal{M}_{\pi_{n}(H)}$ and $\mathcal{M}_{\pi_{n}(H)}^{0}$, we refer the reader to \cite{deligne1973schemas}.
	
	The modular curve $X_{H}$ is a smooth projective curve defined over $\Q$, and is geometrically integral if and only if $\det(H) = \Zhat*$. The modular curve $Y_{H}$ is an affine subscheme of $X_{H}$. We call the closed points of $X_{H} \setminus Y_{H}$ the cusps of $X_{H}$, while the closed points of $Y_{H}$ are said to be non-cuspidal.
	
	By virtue of the definition of $Y_{H}$ as a coarse moduli space, one can give an explicit description of the set of geometric points of $Y_{H}$. Given an elliptic curve $E$ defined over a number field $k$ and two profinite level structures $\alpha, \beta$ on $E$, we say that $\alpha$ and $\beta$ are $H$-equivalent if $\alpha \circ \beta^{-1} \in H$. Denoting by $[\alpha]_{H}$ the $H$-equivalence class of the profinite level structure $\alpha$, we say that two pairs $(E, [\alpha]_{H})$ and $(E', [\alpha']_{H})$ are equivalent if there exists an isomorphism $\varphi : E \to E'$ of elliptic curves such that
	\[
		[\alpha]_{H} = [\alpha' \circ \varphi]_{H}.
	\]
	Then, by \cite[Section 4]{terao2025isolated}, the set of geometric points $Y_{H}(\Qbar)$ is in bijection with the set of equivalence classes $[(E, [\alpha]_{H})]$, as $E$ varies through all elliptic curves defined over a number field, and $\alpha$ varies through all profinite level structures on $E$.
	
	The Galois action of the absolute Galois group $G_{\Q}$ on the geometric points can also be described explicitly. Namely, by \cite[Section 4]{terao2025isolated}, the left action of $G_{\Q}$ on the geometric points $Y_{H}(\Qbar)$ is given by
	\[
		\sigma \cdot [(E, [\alpha]_{H})] = [(\sigma E, [\alpha \circ \sigma^{-1}]_{H})],
	\]
	where $\sigma E$ is the elliptic curve obtained by acting by $\sigma$ on the coefficients of the defining equation of $E$.
	
	We denote by $X(1)$ the modular curve $X_{\GL(\Zhat)}$, and note that there is a canonical isomorphism $X(1) \cong \Pone_{\Q}$ defined over $\Q$, given on $Y_{H}(\Qbar)$ by
	\[
		[(E, [\alpha]_{\GL(\Zhat)})] \mapsto j(E).
	\]
	Throughout, we make implicit use of this isomorphism, and write $j \in X(1)(\Qbar)$ to denote the geometric point of $X(1)$ corresponding to $j \in \Qbar = \AA^{1}_{\Q}(\Qbar) \subset \Pone_{\Q}(\Qbar)$.
	
	Given an element $k \in \GL(\Zhat)$, there is a $\Q$-isomorphism of modular curves $X_{H} \cong X_{k H k^{-1}}$. Thus, given a conjugacy class $\cH$ of open subgroups of $\GL(\Zhat)$, we define the modular curve $X_{\cH}$ to be the modular curve $X_{H}$, for any representative $H \in \cH$.
	
	Let $H \leq H'$ be two open subgroups of $\GL(\Zhat)$. The inclusion $H \leq H'$ induces a finite locally free morphism of modular curves $f : X_{H} \to X_{H'}$, of degree $[\pm H' : \pm H]$, given on non-cuspidal geometric points by
	\begin{alignat*}{2}
		f : && Y_{H}(\Qbar) & \to Y_{H'}(\Qbar) \\
		&& [(E, [\alpha]_{H})] & \mapsto [(E, [\alpha]_{H'})].
	\end{alignat*}
	In particular, for any open subgroup $H$ of $\GL(\Zhat)$, we obtain a morphism $j : X_{H} \to X(1)$ called the $j$-map. Let $x \in X_{H}$ be a closed point. Then $x$ is cuspidal if $j(x) = \infty \in X(1)$, and we say that $x$ is CM if $j(x) \in X(1)$ is the Galois orbit of a CM $j$-invariant $j \in \Qbar$.
	
	The $j$-map $j : X_{H} \to X(1)$ commutes with the isomorphism $X_{H} \cong X_{H'}$, for any conjugate $H' \in \cH$ of $H$. Therefore, we may also speak of the $j$-map $j : X_{\cH} \to X(1)$, and define cuspidal and CM-points on $X_{\cH}$ in the same way. In general however, given two conjugacy classes $\cH \leq \cH'$ of open subgroups of $\GL(\Zhat)$, there may be multiple inclusion morphisms $X_{\cH} \to X_{\cH'}$, since there may be multiple conjugates $H \in \cH$ contained in the same conjugate $H' \in \cH'$.
	
	As is customary, we denote by $X_{0}(n)$ the modular curve $X_{B_{0}(n)}$ and by $X_{1}(n)$ the modular curve $X_{B_{1}(n)}$, for all $n \geq 1$. We note that the modular curve $X_{1}(n)$ is usually defined to be the modular curve $X_{B_{1}(n)'}$, where
	\[
		B_{1}(n)' = \pi_{n}^{-1} (\{\begin{bsmallmatrix}1 & a \\ 0 & b\end{bsmallmatrix} \in \GL(\Z[n]) : a \in \Z[n], b \in \Z*[n]\}).
	\]
	However, as $B_{1}(n) = \pm B_{1}(n)'$, the two resulting curves are isomorphic over $\Q$. We prefer our definition of $B_{1}(n)$ as having a subgroup containing $-I$ will simplify the notation throughout the remainder of the paper.
	
	\subsection{Degrees of points on modular curves} \label{sec:preliminaries:modular_curve_point_degrees}
	
	The moduli descriptions of $Y_{H}(\Qbar)$ and its Galois action, presented in the previous section, provide a practical way to understand the degrees of non-cuspidal points on the modular curve $X_{H}$. In this section, we provide a reformulation of this in terms of the adelic Galois representations defined in Section \ref{sec:preliminaries:galois_representations}. More precisely, we prove the following result.
	
	\begin{theorem} \label{thm:preliminaries:galois_action_bijection}
		Let $H$ be an open subgroup of $\GL(\Zhat)$. Let $j \in \Qbar$, let $E / \Q(j)$ be an elliptic curve such that $j(E) = j$, and let $\alpha$ be a profinite level structure on $E$. Then, there is a morphism of group actions given by the commutative diagram
		\[\begin{tikzcd}
			G_{\Q(j)} \arrow[d] \arrow[r] & \Sym\!\left(\{x \in X_{H}(\Qbar) : j(x) = j\}\right) \ar[d, "\cong"]\\
			G_{E, \alpha} {}^{\operatorname{op}} \arrow[r] & \Sym(H \backslash {\GL(\Zhat)} / A_{E, \alpha}),
		\end{tikzcd}\]
		where the top arrow is given by the left action of $G_{\Q}$ on $X_{H}(\Qbar)$ and the bottom arrow is defined by the right action of $G_{E, \alpha}$ on $H \backslash {\GL(\Zhat)} / A_{E, \alpha}$ given by
		\[
			(H k A_{E, \alpha})^{g} = H k g A_{E, \alpha},
		\]
		for all $g \in G_{E, \alpha}$ and $k \in \GL(\Zhat)$.
		
		Moreover, the image of $G_{\Q(j)}$ in $\Sym(H \backslash {\GL(\Zhat)} / A_{E, \alpha})$ is equal to the image of $G_{E, \alpha} {}^{\operatorname{op}}$.
	\end{theorem}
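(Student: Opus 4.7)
The plan is to unpack the moduli description of the fiber and translate the Galois action into double-coset language. First, I would observe that any elliptic curve $E'/\Qbar$ with $j(E') = j$ is $\Qbar$-isomorphic to $E_{\Qbar}$; by precomposing a given profinite level structure on $E'$ with a chosen isomorphism, every geometric point in the fiber can be represented by a pair $(E, [\alpha']_{H})$ with $\alpha'$ a profinite level structure on $E_{\Qbar}$. Next, I would parametrize such level structures by $\GL(\Zhat)$ via $k \mapsto k \circ \alpha$. Under this identification, $H$-equivalence of level structures corresponds to left cosets $H \backslash \GL(\Zhat)$, and the equivalence $(E, [k \alpha]_{H}) \sim (E, [k' \alpha]_{H})$ via some $\varphi \in \Aut(E_{\Qbar})$ unpacks to the condition $H k A_{E, \alpha} = H k' A_{E, \alpha}$ (using $a = \alpha \circ \varphi \circ \alpha^{-1} \in A_{E, \alpha}$). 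This yields a bijection between the fiber and $H \backslash \GL(\Zhat) / A_{E, \alpha}$, sending $H k A_{E, \alpha} \mapsto [(E, [k \alpha]_{H})]$, which supplies the right vertical arrow.

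Third, I would translate the Galois action. Since $E$ is defined over $\Q(j)$, any $\sigma \in G_{\Q(j)}$ satisfies $\sigma E = E$, and the formula from Section \ref{sec:preliminaries:modular_curves} specializes to $\sigma \cdot [(E, [k \alpha]_{H})] = [(E, [k \alpha \circ \sigma^{-1}]_{H})]$. Using the defining identity $\alpha \circ \sigma = \rho_{E, \alpha}(\sigma) \circ \alpha$, we obtain $\alpha \circ \sigma^{-1} = \rho_{E, \alpha}(\sigma)^{-1} \circ \alpha$, so $\sigma$ carries the coset $H k A_{E, \alpha}$ to $H k \rho_{E, \alpha}(\sigma)^{-1} A_{E, \alpha}$. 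This is exactly the right-multiplication action of $\rho_{E, \alpha}(\sigma)^{-1} \in G_{E, \alpha}^{\op}$ described in the theorem; the assignment $\sigma \mapsto \rho_{E, \alpha}(\sigma)^{-1}$ defines a homomorphism $G_{\Q(j)} \to G_{E, \alpha}^{\op}$, since inversion interacts with the opposite multiplication to restore the correct composition order. This provides the left vertical arrow and establishes commutativity.

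For the final claim, I would observe that $A_{E, \alpha}$ acts trivially on $H \backslash \GL(\Zhat) / A_{E, \alpha}$ by right multiplication, because $H k A_{E, \alpha} \cdot a = H k a A_{E, \alpha} = H k A_{E, \alpha}$ for all $a \in A_{E, \alpha}$. Hence the image of $G_{E, \alpha}^{\op} = \rho_{E, \alpha}(G_{\Q(j)}) A_{E, \alpha}$ in the lower symmetric group equals the image of $\rho_{E, \alpha}(G_{\Q(j)})^{\op}$, which by commutativity coincides with the image of $G_{\Q(j)}$ in the upper symmetric group.

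The main obstacle is not conceptual but bookkeeping: one must carefully reconcile the left action of $G_{\Q(j)}$ with the right action of $G_{E, \alpha}$ via the opposite group and the inversion built into $\alpha \sigma^{-1} = \rho_{E, \alpha}(\sigma)^{-1} \alpha$, and simultaneously verify that the initial replacement of an arbitrary $E'$ by the fixed $E$ is harmless—this is exactly what the quotient by $A_{E, \alpha}$ on the right ensures, since the ambiguity in the choice of $\Qbar$-isomorphism $E_{\Qbar} \to E'_{\Qbar}$ is precisely an element of $\Aut(E_{\Qbar})$.
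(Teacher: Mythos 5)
Your proposal is correct and follows essentially the same route as the paper: reduce to $E' = E$ via a $\Qbar$-isomorphism, parametrize level structures by $\GL(\Zhat)$ to identify the fiber with $H \backslash \GL(\Zhat) / A_{E, \alpha}$, translate the Galois action $\beta \mapsto \beta \circ \sigma^{-1}$ into right multiplication by $\rho_{E, \alpha}(\sigma)^{-1}$, pass through the opposite-group isomorphism, and note that $A_{E, \alpha}$ acts trivially on the double cosets for the final claim. The only difference is cosmetic bookkeeping (you parametrize level structures as $k \circ \alpha$ rather than precomposing by $\alpha^{-1}$), which produces the same identification.
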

	
	\begin{proof}
		By the moduli description of $Y_{H}(\Qbar)$, we know that the set $\{x \in X_{H}(\Qbar) : j(x) = j\}$ is in bijection with the set of equivalence classes $[(E', [\beta]_{H})]$, where $E'$ is an elliptic curve with $j(E') = j$, and $\beta$ is a profinite level structure on $E'$. By \cite[Lemma 4.9]{terao2025isolated}, we may take $E' = E$. Moreover, for two profinite level structures $\beta$ and $\beta'$ on $E$, we have that
		\begin{align*}
			& [(E, [\beta]_{H})] = [(E, [\beta']_{H})] \\
			& \quad \iff \exists \, \varphi \in \Aut(E_{\overline{\Q}}) \text{ s.t. } [\beta]_{H} = [\beta' \circ \varphi]_{H} \\
			& \quad \iff \exists \, \varphi \in \Aut(E_{\overline{\Q}}) \text{ s.t. } \beta' \circ \varphi \circ \beta^{-1} \in H \\
			& \quad \iff \exists \, \varphi \in \Aut(E_{\overline{\Q}}) \text{ s.t. } (\beta' \circ \alpha^{-1}) (\alpha \circ \varphi \circ \alpha^{-1}) (\alpha \circ \beta^{-1}) \in H \\
			& \quad \iff \exists \, \varphi \in \Aut(E_{\overline{\Q}}) \text{ s.t. } H (\beta' \circ \alpha^{-1}) (\alpha \circ \varphi \circ \alpha^{-1}) = H (\beta \circ \alpha^{-1}) \\
			& \quad \iff H (\beta' \circ \alpha^{-1}) A_{E, \alpha} = H (\beta \circ \alpha^{-1}) A_{E, \alpha}.
		\end{align*}
		Precomposition by $\alpha^{-1}$ gives a bijection between the set of profinite level structures on $E$ and the group $\GL(\Zhat)$. Therefore, the above shows that the set $\{x \in X_{H}(\Qbar) : j(x) = j\}$ is in bijection with the set of double cosets $H \backslash {\GL(\Zhat)} / A_{E, \alpha}$.
		
		Let $\sigma$ be an element of the absolute Galois group $G_{\Q(j)}$. Given a point $[(E, [\beta]_{H})] \in X_{H}(\Qbar)$, we know that
		\[
			\sigma \cdot [(E, [\beta]_{H})] = [(\sigma E, [\beta \circ \sigma^{-1}]_{H})].
		\]
		As $E$ is defined over $\Q(j)$, we have that $\sigma E = E$, and so
		\[
			\sigma \cdot [(E, [\beta]_{H})] = [(E, [\beta \circ \sigma^{-1}]_{H})].
		\]
		Combining this with the above bijection, we obtain a commutative diagram
		\[\begin{tikzcd}
			G_{\Q(j)} \ar[r] \ar[rd] & \Sym(\{x \in X_{H}(\Qbar) : j(x) = j\}) \ar[d, "\cong"] \\
			& \Sym(H \backslash {\GL(\Zhat)} / A_{E, \alpha}),
		\end{tikzcd}\]
		where the bottom-left homomorphism maps an element $\sigma \in G_{\Q(j)}$ to the bijection defined by
		\[
			H k A_{E, \alpha} \mapsto H k \rho_{E, \alpha} (\sigma^{-1}) A_{E, \alpha},
		\]
		for all $k \in \GL(\Zhat)$. It follows that we may write
		\[\begin{tikzcd}
			G_{\Q(j)} \ar[r] \ar[d] & \Sym(\{x \in X_{H}(\Qbar) : j(x) = j\}) \ar[d, "\cong"] \\
			G_{E, \alpha} \ar[r] & \Sym(H \backslash {\GL(\Zhat)} / A_{E, \alpha}),
		\end{tikzcd}\]
		where the left map is the adelic Galois representation $\rho_{E, \alpha}$, and the bottom homomorphism maps an element $g \in G_{E, \alpha}$ to the bijection defined by
		\[
			H k A_{E, \alpha} \mapsto H k g^{-1} A_{E, \alpha},
		\]
		for all $k \in \GL(\Zhat)$. Note that the latter map is well-defined as the automorphism image $A_{E, \alpha}$ is a normal subgroup of $G_{E, \alpha}$.
		
		To obtain the first statement of the theorem, it now suffices to note that there exists an isomorphism $G_{E, \alpha} \to G_{E, \alpha} {}^{\op}$ given by $g \mapsto g^{-1}$.
		
		For the second statement, recall that we have $G_{E, \alpha} = \rho_{E, \alpha}(G_{\Q(j)}) A_{E, \alpha}$. Moreover, by construction, the automorphism image $A_{E, \alpha}$ is contained in the kernel of the map $G_{E, \alpha} \to \Sym(H \backslash {\GL(\Zhat)} / A_{E, \alpha})$. Therefore, the image of $G_{\Q(j)}$ in $\Sym(H \backslash {\GL(\Zhat)} / A_{E, \alpha})$ is equal to the image of $G_{E, \alpha}$. The statement now follows from the fact that the map $G_{E, \alpha} \to G_{E, \alpha} {}^{\op}$ is an isomorphism.
	\end{proof}
	
	Taking $j \notin \{0, 1728\}$, we obtain a very simple expression for the degree of the non-cuspidal points and fibers of the modular curve $X_{H}$, which will be central to the remainder of the paper.
	
	\begin{corollary} \label{thm:preliminaries:modular_curve_point_degrees}
		Let $H$ be an open subgroup of $\GL(\Zhat)$, and let $j \in \Qbar \setminus \{0, 1728\}$. Then, we have
		\[
			\{\deg(x) : x \in X_{H}, j(x) = \overline{j} \in X(1)\} = \{[\Q(j) : \Q] \left|(\pm H) \cdot G\right| : G \in \cG_{j}\},
		\]
		and
		\begin{align*}
			\ddeg_{X_{H}}(\overline{j}) = \ldblbrace [\Q(j) : \Q] \left| \Omega \right| : \Omega \in \pm H \backslash {\GL(\Zhat)} / \cG_{j} \rdblbrace,
		\end{align*}
		where we view $\pm H \backslash {\GL(\Zhat)} / \cG_{j}$ as the set of orbits of $\pm H \backslash {\GL(\Zhat)}$ under the right action of some representative $G \in \cG_{j}$.
	\end{corollary}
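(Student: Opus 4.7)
The plan is to translate the statement about closed points and fibers of $X_{H}$ into a statement about Galois orbits of geometric points above $\overline{j}$, and then invoke Theorem \ref{thm:preliminaries:galois_action_bijection} to rewrite these as orbits on $\pm H \backslash {\GL(\Zhat)}$ under $\cG_{j}$.

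First, I would fix an elliptic curve $E/\Q(j)$ with $j(E) = j$ and a profinite level structure $\alpha$, so that $\cG_{j}$ is represented by $G_{E,\alpha}$. Since $j \notin \{0,1728\}$, one has $A_{E,\alpha} = \{\pm I\}$, and thus $H \backslash {\GL(\Zhat)} / A_{E,\alpha}$ is canonically identified with $\pm H \backslash {\GL(\Zhat)}$. Moreover, $\pm I$ acts trivially on this quotient, so the right $G_{E,\alpha}$-action factors through $\rho_{E,\alpha}(G_{\Q(j)})$, and the orbits coincide with those of the right action of $G_{E,\alpha}$.

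Next, I would analyse the relation between $G_{\Q}$-orbits on the geometric fiber $j^{-1}(\overline{j})(\Qbar) \subset X_{H}(\Qbar)$ and $G_{\Q(j)}$-orbits on the subset $S := \{y \in X_{H}(\Qbar) : j(y) = j\}$. Partitioning $j^{-1}(\overline{j})(\Qbar)$ by $j$-value gives $[\Q(j):\Q]$ subsets, each permuted transitively by $G_{\Q}$, with $G_{\Q(j)}$ being the stabilizer of $S$. A standard orbit-stabilizer argument then shows that the $G_{\Q}$-orbits on $j^{-1}(\overline{j})(\Qbar)$ are in bijection with the $G_{\Q(j)}$-orbits on $S$, where a $G_{\Q}$-orbit $\mathcal{O}$ corresponds to the $G_{\Q(j)}$-orbit $\mathcal{O} \cap S$, and $|\mathcal{O}| = [\Q(j):\Q] \cdot |\mathcal{O} \cap S|$. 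Since closed points $x \in X_{H}$ above $\overline{j}$ are exactly the $G_{\Q}$-orbits on $j^{-1}(\overline{j})(\Qbar)$ with $\deg(x)$ equal to the orbit size, this yields a bijection between such closed points and $G_{\Q(j)}$-orbits on $S$, compatible with degrees up to the factor $[\Q(j):\Q]$.

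Combining these two steps with Theorem \ref{thm:preliminaries:galois_action_bijection}, the closed points of $X_{H}$ above $\overline{j}$ are in bijection with the orbits of $\pm H \backslash {\GL(\Zhat)}$ under (any representative of) $\cG_{j}$, with degree equal to $[\Q(j):\Q]$ times the orbit size. This gives the equality for $\ddeg_{X_{H}}(\overline{j})$ as a multiset. The set-level equality follows by noting that, for $G = kG_{E,\alpha}k^{-1} \in \cG_{j}$, the orbit $(\pm H) \cdot G$ of the identity coset under $G$ has the same cardinality as the orbit of the coset $\pm H k$ under $G_{E,\alpha}$, so as $G$ ranges over $\cG_{j}$ we recover every orbit size appearing in $\pm H \backslash {\GL(\Zhat)} / G_{E,\alpha}$.

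The main bookkeeping obstacle is the orbit-stabilizer reduction in the second step, where one must carefully verify that the $G_{\Q}$-orbit of a geometric point $y$ with $j(y) = j$ intersects $S$ in exactly the $G_{\Q(j)}$-orbit of $y$, and that this construction indeed defines a bijection. Everything else consists of unwinding Theorem \ref{thm:preliminaries:galois_action_bijection} under the simplification $A_{E,\alpha} = \{\pm I\}$ and translating between conjugates via the identification $\pm H \cdot k \leftrightarrow \pm H$ of orbits under conjugate subgroups.
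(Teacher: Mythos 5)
Your proposal is correct and follows essentially the same route as the paper: reduce from $G_{\Q}$-orbits on the full fiber over $\overline{j}$ to $G_{\Q(j)}$-orbits on the sub-fiber with $j$-value exactly $j$ (picking up the factor $[\Q(j):\Q]$), then apply Theorem \ref{thm:preliminaries:galois_action_bijection} together with $A_{E,\alpha}=\{\pm I\}$, and finish the set-level statement via the conjugation identity $|(\pm H k)\cdot G_{E,\alpha}| = |(\pm H)\cdot kG_{E,\alpha}k^{-1}|$. The only cosmetic difference is that you spell out the orbit-stabilizer bookkeeping that the paper compresses into the single line $\deg(x)=|G_{\Q}\cdot y|=[\Q(j):\Q]\,|G_{\Q(j)}\cdot y|$.
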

	
	\begin{proof}
		Let $x \in X_{H}$ be a closed point with $j(x) = \overline{j} \in X(1)$, and let $y \in X_{H}(\Qbar)$ be a geometric point such that $j(y) = j \in X(1)(\Qbar)$ and $\overline{y} = x$. By definition, the absolute Galois group $G_{\Q}$ acts transitively on the conjugates of $j \in X(1)(\Qbar)$, with stabilizer equal to the absolute Galois group $G_{\Q(j)}$. Therefore, we have that
		\[
			\deg(x) = |G_{\Q} \cdot y| = [\Q(j) : \Q] |G_{\Q(j)} \cdot y|.
		\]
		It follows that
		\[
			\ddeg_{X_{H}}(\overline{j}) = \ldblbrace [\Q(j) : \Q] \left| \Omega \right| : \Omega \in G_{\Q(j)} \backslash \{y \in X_{H}(\Qbar) : j(y) = j\} \rdblbrace,
		\]
		where $G_{\Q(j)} \backslash \{y \in X_{H}(\Qbar) : j(y) = j\}$ denotes the set of orbits of the fiber $\{y \in X_{H}(\Qbar) : j(y) = j\}$ under the left action of $G_{\Q(j)}$. By Theorem \ref{thm:preliminaries:galois_action_bijection}, for $E$ and $\alpha$ defined as in the statement of Theorem \ref{thm:preliminaries:galois_action_bijection}, we obtain that
		\[
			\ddeg_{X_{H}}(\overline{j}) = \ldblbrace [\Q(j) : \Q] \left| \Omega \right| : \Omega \in (H \backslash {\GL(\Zhat)} / A_{E, \alpha}) / G_{E, \alpha} \rdblbrace,
		\]
		where the right action of $G_{E, \alpha}$ on the set of double cosets $H \backslash {\GL(\Zhat)} / A_{E, \alpha}$ is given by
		\[
			(H k A_{E, \alpha})^{g} = H k g A_{E, \alpha},
		\]
		for all $g \in G_{E, \alpha}$ and $k \in \GL(\Zhat)$.
		
		Since $j \notin \{0, 1728\}$, it follows that $A_{E, \alpha} = \{\pm I\}$. In particular, the set of double cosets $H \backslash {\GL(\Zhat)} / A_{E, \alpha}$ is equal to the set of single cosets $\pm H \backslash {\GL(\Zhat)}$. Under this identification, the action of $G_{E, \alpha}$ is simply given by right multiplication on the cosets of $\pm H$, and we obtain
		\[
			\ddeg_{X_{H}}(\overline{j}) = \ldblbrace [\Q(j) : \Q] \left| \Omega \right| : \Omega \in \pm H \backslash {\GL(\Zhat)} / G_{E, \alpha} \rdblbrace.
		\]
		Noting that $\cG_{j}$ is the conjugacy class of $G_{E, \alpha}$, we obtain the second statement.
		
		Let $k \in \GL(\Zhat)$. We have that
		\begin{align*}
			|(\pm H k) \cdot G_{E, \alpha}| & = |\{\pm H k g : g \in G_{E, \alpha}\}| \\
			& = |\{\pm H k g k^{-1} : g \in G_{E, \alpha}\}| \\
			& = |(\pm H) \cdot (k G_{E, \alpha} k^{-1})|,
		\end{align*}
		where the second equality stems from the fact that multiplication by $k^{-1}$ is a bijection on $\GL(\Zhat)$. Therefore, we have
		\begin{align*}
			& \{\deg(x) : x \in X_{H}, j(x) = \overline{j} \in X(1)\} \\
			& \quad = \{[\Q(j) : \Q] \left| \Omega \right| : \Omega \in \pm H \backslash {\GL(\Zhat)} / G_{E, \alpha}\} \\
			& \quad = \{[\Q(j) : \Q] \left| (\pm H k) \cdot G_{E, \alpha} \right| : k \in \GL(\Zhat)\} \\
			& \quad = \{[\Q(j) : \Q] \left| (\pm H) \cdot k G_{E, \alpha} k^{-1} \right| : k \in \GL(\Zhat)\} \\
			& \quad = \{[\Q(j) : \Q] \left| (\pm H) \cdot G \right| : G \in \cG_{j}\},
		\end{align*}
		as required.
	\end{proof}
	
	We conclude this section with a well-known corollary for the existence of rational points on modular curves.
	
	\begin{corollary} \label{thm:preliminaries:modular_curve_rational_point}
		Let $H$ be an open subgroup of $\GL(\Zhat)$, and denote by $\cH$ the conjugacy class of $H$. Let $j \in \Qbar \setminus \{0, 1728\}$. Then there exists a point $x \in X_{H}(\Q(j))$ such that $j(x) = j$ if and only if $\cG_{j} \leq \pm \cH$.
	\end{corollary}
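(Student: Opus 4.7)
The plan is to deduce this corollary directly from Corollary \ref{thm:preliminaries:modular_curve_point_degrees}. A $\Q(j)$-rational point $x \in X_H(\Q(j))$ with $j(x) = j$ corresponds to a closed point $\overline{x} \in X_H$ whose residue field embeds into $\Q(j)$; since $\overline{x}$ lies over the Galois orbit $\overline{j} \in X(1)$, its residue field contains $\Q(j)$. Therefore the existence of such a $\Q(j)$-rational point is equivalent to the existence of a closed point of $X_H$ above $\overline{j}$ of degree exactly $[\Q(j) : \Q]$, the minimum possible degree.

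By Corollary \ref{thm:preliminaries:modular_curve_point_degrees}, the set of degrees of the closed points on $X_H$ above $\overline{j}$ is
\[
\{[\Q(j) : \Q] \cdot |(\pm H) \cdot G| : G \in \cG_j\}.
\]
The value $[\Q(j) : \Q]$ appears in this set if and only if there exists $G \in \cG_j$ such that the orbit $(\pm H) \cdot G$ of the identity coset of $\pm H \backslash \GL(\Zhat)$ under right multiplication by $G$ has size $1$. This orbit has size $1$ precisely when $\pm H g = \pm H$ for every $g \in G$, equivalently when $G \subseteq \pm H$.

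Finally, unwinding the definition of $\leq$ on conjugacy classes given in Section \ref{sec:preliminaries:gl2}, the condition $\cG_j \leq \pm \cH$ asserts the existence of representatives $G \in \cG_j$ and $H' \in \cH$ with $G \subseteq \pm H'$. Since $\pm I$ is central, $\pm \cH$ is the conjugacy class of $\pm H$, and conjugating back to the fixed representative $H$ sends $G$ to another element of $\cG_j$; hence this is equivalent to the existence of some $G \in \cG_j$ with $G \subseteq \pm H$, matching the condition extracted from Corollary \ref{thm:preliminaries:modular_curve_point_degrees} above.

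I do not anticipate any substantial obstacle, since the statement is essentially a direct unpacking of the degree formula in Corollary \ref{thm:preliminaries:modular_curve_point_degrees}. The only step worth handling carefully is the equivalence between the existence of a $\Q(j)$-rational point above $j$ and the existence of a closed point above $\overline{j}$ of minimal degree $[\Q(j) : \Q]$, which follows immediately from the standard description of closed points on smooth curves recalled in Section \ref{sec:preliminaries:points_on_curves}.
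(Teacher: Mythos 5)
Your proof is correct and follows essentially the same route as the paper: invoke Corollary~\ref{thm:preliminaries:modular_curve_point_degrees}, observe that a $\Q(j)$-rational point above $j$ exists iff some orbit $(\pm H)\cdot G$ has size one, and translate the size-one condition into the containment $\cG_j \leq \pm\cH$. You are somewhat more explicit than the paper about the equivalence between a $\Q(j)$-rational point and a closed point of minimal degree $[\Q(j):\Q]$ above $\overline{j}$, but this is just unpacking a step the paper treats as immediate.
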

	
	\begin{proof}
		By Corollary \ref{thm:preliminaries:modular_curve_point_degrees}, there exists a point $x \in X_{H}(\Q(j))$ such that $j(x) = j$ if and only if $|(\pm H) \cdot G| = 1$ for some representative $G \in \cG_{j}$. The latter condition is equivalent to the condition that $G \leq \pm H$ for some representative $G \in \cG_{j}$, or in other words, $\cG_{j} \leq \pm \cH$.
	\end{proof}
	
	\section{\texorpdfstring{$\cH$}{H}-closures} \label{sec:closures}
	
	Let $H$ be an open subgroup of $\GL(\Zhat)$. By Corollary \ref{thm:preliminaries:modular_curve_point_degrees}, the degrees of the points of the modular curve $X_{H}$ with $j$-invariant $j \in \Qbar$ can be determined from knowledge of the extended adelic Galois image $\cG_{j}$ of $j$. This latter question is precisely the statement of Mazur's Program B, and has been extensively studied in existing work. However, despite the wealth of knowledge on this question, presently there does not exist a complete classification of the possible images $\cG_{j}$, even when restricting to rational $j$-invariants.
	
	However, knowing the full extended adelic Galois image $\cG_{j}$ of $j$ is not necessary in order to compute the degrees of the points of $X_{H}$ with $j$-invariant $j$. Instead, it suffices to understand the orbits of $\pm H \backslash {\GL(\Zhat)}$ under the right action of $\cG_{j}$. In particular, rather than determine the image $\cG_{j}$, it is sufficient to determine a larger conjugacy class $\cG_{j} \leq \cG$ such that the orbits of $\pm H \backslash {\GL(\Zhat)}$ under the right actions of $\cG_{j}$ and $\cG$ are the same. As it happens, there is a unique maximal such conjugacy class $\cG$, which we call the $\pm \cH$-closure of $\cG_{j}$. More precisely, we set the following definitions.
	
	\begin{definition}
		Let $G, G'$ and $H$ be three subgroups of $\GL(\Zhat)$, and let $\cH$ denote the conjugacy class of $H$. Recall that the group $\GL(\Zhat)$ acts on the coset space $H \backslash {\GL(\Zhat)}$ by right multiplication.
		\begin{enumerate}
			\item We say that $G$ and $G'$ are \textbf{$H$-equivalent} if the orbits $H \cdot G$ and $H \cdot G'$ are equal, or equivalently, if the products $H G$ and $H G'$ are equal.
			\item We say that $G$ is \textbf{$H$-closed} if there does not exist a proper overgroup $G'$ of $G$ which is $H$-equivalent to $G$.
			\item We define the \textbf{$H$-closure of $G$}, denoted $\clo{H}{G}$, to be the unique $H$-closed subgroup of $\GL(\Zhat)$ which is $H$-equivalent to $G$. Equivalently, it is the largest overgroup of $G$ contained in the product $H G$.
			\item We say that $G$ and $G'$ are \textbf{$\cH$-equivalent} if the sets of orbits $H \backslash {\GL(\Zhat)} / G$ and $H \backslash {\GL(\Zhat)} / G'$ are equal, or equivalently, if the double cosets $H g G$ and $H g G'$ are equal for all $g \in \GL(\Zhat)$. This is also equivalent to $G$ and $G'$ being $H'$-equivalent for all $H' \in \cH$, whence follows the terminology.
			\item We say that $G$ is \textbf{$\cH$-closed} if there does not exist a proper overgroup $G'$ of $G$ which is $\cH$-equivalent to $G$.
			\item We define the \textbf{$\cH$-closure of $G$}, denoted $\clo{\cH}{G}$, to be the unique $\cH$-closed subgroup of $\GL(\Zhat)$ which is $\cH$-equivalent to $G$. Equivalently, it is the largest overgroup of $G$ contained in the intersection $\bigcap_{H' \in \cH} H' G$.
			\item We note that, for any $g \in \GL(\Zhat)$, we have $\clo{\cH}{(G^{g})} = (\clo{\cH}{G})^{g}$. Therefore, if $\cG$ denotes the conjugacy class of $G$, we define the \textbf{$\cH$-closure of $\cG$}, denoted $\clo{\cH}{\cG}$, to be the conjugacy class of $\clo{\cH}{G}$. Note that the same does not work for $\clo{H}{G}$, as $\clo{H}{(G^{g})} \neq (\clo{H}{G})^{g}$.
		\end{enumerate}
	\end{definition}
	
	\begin{remark} \label{rmk:closures:explicit_description}
		The statements in the above definition, such as the existence and uniqueness of $H$- and $\cH$-closures, are straightforward to deduce from the definitions. However, we can also give an explicit construction for the $H$- and $\cH$-closures of $G$, which provides an alternative method for resolving these questions. This explicit description will also prove to be useful later. The construction is as follows.
		
		The right action of $\GL(\Zhat)$ on the coset space $H \backslash {\GL(\Zhat)}$ yields a group homomorphism
		\[
			\varphi : \GL(\Zhat)^{\op} \to \Sym(H \backslash {\GL(\Zhat)}).
		\]
		Let $\{\Omega_{i}\}_{i \in I}$ be the set of orbits of $H \backslash {\GL(\Zhat)}$ under the action of $G$, where we let $\Omega_{1}$ be the orbit of the identity coset $H$. Then, the $H$-closure of $G$ is equal to
		\[
			\clo{H}{G} = \varphi^{-1}\!\left(\Sym(\Omega_{1}) \times \Sym\left(\bigcup_{i \in I \setminus \{1\}} \Omega_{i}\right)\right) \leq \GL(\Zhat)^{\op} \cong \GL(\Zhat).
		\]
		Similarly, the $\cH$-closure of $G$ is given by
		\[
			\clo{\cH}{G} = \varphi^{-1}\!\left(\prod_{i \in I} \Sym(\Omega_{i})\right).
		\]
	\end{remark}
	
	Leveraging these definitions, we can now restate Corollary \ref{thm:preliminaries:modular_curve_point_degrees} using the notions of $H$- and $\cH$-closures.
	
	\begin{corollary} \label{thm:closures:modular_curve_point_degrees}
		Let $H$ be an open subgroup of $\GL(\Zhat)$, and let $j \in \Qbar$ be such that $j \notin \{0, 1728\}$. Denote by $\pm \cH$ the conjugacy class of $\pm H$ in $\GL(\Zhat)$. Then, we have
		\[
			\{\deg(x) : x \in X_{H}, j(x) = \overline{j} \in X(1)\} = \{[\Q(j) : \Q] \left|(\pm H) \cdot \clo{\pm H}{G}\right| : G \in \cG_{j}\},
		\]
		and
		\begin{align*}
			\ddeg_{X_{H}}(\overline{j}) = \ldblbrace [\Q(j) : \Q] \left| \Omega \right| : \Omega \in \pm H \backslash {\GL(\Zhat)} / \clo{\pm \cH}{\cG_{j}} \rdblbrace.
		\end{align*}
	\end{corollary}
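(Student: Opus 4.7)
My proof plan is to derive the corollary directly from Corollary \ref{thm:preliminaries:modular_curve_point_degrees} by simply invoking the defining properties of the $\pm H$- and $\pm \cH$-closures introduced above. Corollary \ref{thm:preliminaries:modular_curve_point_degrees} already expresses both the degree set and the fiber degree multiset in terms of orbits of $\cG_j$ acting on $\pm H \backslash {\GL(\Zhat)}$ from the right, so the task reduces to checking that replacing the acting subgroup by the appropriate closure leaves the relevant orbit data unchanged.

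For the first equality, Corollary \ref{thm:preliminaries:modular_curve_point_degrees} gives
\[
    \{\deg(x) : x \in X_H, j(x) = \overline{j} \in X(1)\} = \{[\Q(j) : \Q] \left|(\pm H) \cdot G\right| : G \in \cG_j\}.
\]
For each representative $G \in \cG_j$, the definition of the $\pm H$-closure states that $G$ and $\clo{\pm H}{G}$ are $\pm H$-equivalent, which by definition means that the orbit of the identity coset $\pm H$ under the two groups coincides, i.e.\ $(\pm H) \cdot G = (\pm H) \cdot \clo{\pm H}{G}$. Their cardinalities therefore agree, which immediately yields the first equality.

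For the second equality, Corollary \ref{thm:preliminaries:modular_curve_point_degrees} gives
\[
    \ddeg_{X_H}(\overline{j}) = \ldblbrace [\Q(j) : \Q] \left|\Omega\right| : \Omega \in \pm H \backslash {\GL(\Zhat)} / \cG_j \rdblbrace.
\]
Fix any representative $G \in \cG_j$. By definition, $G$ is $\pm \cH$-equivalent to $\clo{\pm \cH}{G}$, which means that the full set of orbits of $\pm H \backslash {\GL(\Zhat)}$ under the two groups coincides; in particular, their multisets of orbit sizes agree. Finally, since $\clo{\pm \cH}{\cG_j}$ is defined as the conjugacy class of $\clo{\pm \cH}{G}$, and the commutative diagram from Section \ref{sec:preliminaries:gl2} shows that the multiset of orbit sizes depends only on the conjugacy class of the acting subgroup, the notation $\pm H \backslash {\GL(\Zhat)} / \clo{\pm \cH}{\cG_j}$ is well-defined as a multiset and equals $\pm H \backslash {\GL(\Zhat)} / \clo{\pm \cH}{G}$ for any representative. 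No step presents a real obstacle: the result is a direct reformulation of Corollary \ref{thm:preliminaries:modular_curve_point_degrees} in the closure terminology just introduced, and both equalities follow by unwinding the definitions.
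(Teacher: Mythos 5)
Your proof is correct and follows exactly the route the paper has in mind: the corollary is presented in the paper as an immediate restatement of Corollary \ref{thm:preliminaries:modular_curve_point_degrees}, with no written proof, and your argument makes precise the unwinding of definitions that the phrase \emph{``restate using the notions of $H$- and $\cH$-closures''} is meant to signal. For the first equality you correctly use that $\pm H$-equivalence of $G$ and $\clo{\pm H}{G}$ means equality of the identity-coset orbits $(\pm H)\cdot G = (\pm H)\cdot \clo{\pm H}{G}$, and for the second that $\pm\cH$-equivalence means equality of the full orbit sets $\pm H\backslash\GL(\Zhat)/G = \pm H\backslash\GL(\Zhat)/\clo{\pm\cH}{G}$, plus the conjugation-invariance of the orbit data (from the isomorphism of right actions in Section \ref{sec:preliminaries:gl2}) that makes the notation $\pm H\backslash\GL(\Zhat)/\clo{\pm\cH}{\cG_j}$ well-defined.
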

	
	This reformulation demonstrates that, in order to compute the degrees of the points on the modular curve $X_{H}$ with $j$-invariant $j$, it suffices to know the $\pm H$-closures $\clo{\pm H}{G}$, for all conjugates $G \in \cG_{j}$. Similarly, in order to determine the degree of the fiber of $X_{H}$ above $j$, it suffices to know the $\pm \cH$-closure $\clo{\pm \cH}{\cG_{j}}$. Thus, throughout the remainder of the paper, our principal aim will be to classify the $\cB_{0}(n)$- and $\cB_{1}(n)$-closures of the conjugacy classes $\cG_{j}$, as $j$ varies through all non-CM rational $j$-invariants $j \in \Q$.
	
	\begin{remark} \label{rmk:closures:fibers_vs_points}
		We take a moment to explain the rationale behind computing the degrees of the fibers of the modular curves $X_{0}(n)$ and $X_{1}(n)$, rather than simply the degrees of the points on said curves. Firstly, this provides a stronger result, as the degrees of the points on the curves can be recovered from the degrees of the fibers.
		
		However, another consideration was the difference between $H$-closures and $\cH$-closures. As explained above, computing the degrees of the points naturally leads to considering $H$-closures, while computing the degrees of the fibers requires computing $\cH$-closures. Working with $\cH$-closures allows one to work at the level of conjugacy classes of subgroups of $\GL(\Zhat)$, while working with $H$-closures requires one to work with subgroups of $\GL(\Zhat)$, as $H$-closed subgroups are not invariant under conjugation. The latter introduces additional computational complexity, and we found it simpler to work with the former.
		
		This choice does bring some downsides however, notably the fact that the $\cH$-closure is often a subgroup of higher index than the $H$-closure. As a result, it may be possible to obtain stronger, perhaps unconditional, results by working directly with $H$-closures instead. We leave this for future work.
	\end{remark}
	
	\subsection{Properties of \texorpdfstring{$\cH$}{H}-closures} \label{sec:closures:properties}
	
	Having now established the notion of $\cH$-closures as the principal object of study throughout the remainder of the paper, we dedicate this section to understanding some of the fundamental properties of $\cH$-closures. Unless specified, all of the properties in this section hold for both $H$- and $\cH$-closures. However, as we will exclusively work with the latter, we omit the proofs for the former. As before, throughout we use the calligraphic $\cG$ and $\cH$ to denote the conjugacy classes of subgroups $G$ and $H$ in $\GL(\Zhat)$.
	
	Firstly, we show that the $\cH$-closure $\clo{\cH}{G}$ is monotone in both $G$ and $\cH$.
	
	\begin{lemma} \label{thm:closures:inclusion_preserving_in_G}
		Let $G$, $G'$ and $H$ be three subgroups of $\GL(\Zhat)$ such that $G \leq G'$. Then, we have
		\[
			\clo{\cH}{G} \leq \clo{\cH}{G'}.
		\]
	\end{lemma}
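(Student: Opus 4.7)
The plan is to leverage the explicit orbit-theoretic description of the $\cH$-closure given in Remark \ref{rmk:closures:explicit_description}. For any subgroup $K$ of $\GL(\Zhat)$, this description realises $\clo{\cH}{K}$ as $\varphi^{-1}\!\bigl(\prod_{i} \Sym(\Omega_{i})\bigr)$, where $\varphi : \GL(\Zhat)^{\op} \to \Sym(H \backslash {\GL(\Zhat)})$ is the homomorphism induced by the right action of $\GL(\Zhat)$, and $\{\Omega_{i}\}_{i \in I}$ is the partition of $H \backslash {\GL(\Zhat)}$ into $K$-orbits.

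The first step is to note that the inclusion $G \leq G'$ forces each $G$-orbit on $H \backslash {\GL(\Zhat)}$ to be contained in some $G'$-orbit; equivalently, the partition $\{\Omega_{i}\}_{i \in I}$ of $H \backslash {\GL(\Zhat)}$ into $G$-orbits refines the partition $\{\Omega'_{j}\}_{j \in J}$ into $G'$-orbits, so that each $\Omega'_{j}$ decomposes as a disjoint union $\bigsqcup_{i \in I_{j}} \Omega_{i}$ for a suitable index set $I_{j} \subseteq I$.

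The second step is to translate this refinement into the subgroup inclusion $\prod_{i \in I} \Sym(\Omega_{i}) \leq \prod_{j \in J} \Sym(\Omega'_{j})$ inside $\Sym(H \backslash {\GL(\Zhat)})$, since any permutation stabilising every $\Omega_{i}$ a fortiori stabilises every $\Omega'_{j}$. Applying $\varphi^{-1}$, which preserves inclusions, then yields $\clo{\cH}{G} \leq \clo{\cH}{G'}$, completing the argument.

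No significant obstacle is expected here: the proof is essentially a direct unwinding of the orbit-theoretic description, and the same reasoning handles the $H$-closure analogue by using only a single representative $H$ in place of the full conjugacy class $\cH$.
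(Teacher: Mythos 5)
Your main argument for the $\cH$-closure statement is correct, and it is genuinely different from the paper's proof. The paper works directly with the characterisation of $\clo{\cH}{G}$ as the largest overgroup of $G$ contained in $\bigcap_{H' \in \cH} H'G$: it shows that for each $H' \in \cH$ and each $g \in G'$ one has $H' g \, \clo{\cH}{G} \subseteq H'G'$, deduces $H'\langle G', \clo{\cH}{G}\rangle = H'G'$, and concludes that $\langle G', \clo{\cH}{G}\rangle$ is an overgroup of $G'$ contained in every $H'G'$, hence lies in $\clo{\cH}{G'}$. Your route instead invokes the orbit-theoretic description from Remark \ref{rmk:closures:explicit_description}: $\clo{\cH}{G} = \varphi^{-1}\bigl(\prod_i \Sym(\Omega_i)\bigr)$, so that the containment $G \le G'$ makes the $G$-orbit partition refine the $G'$-orbit partition, whence the block-stabiliser subgroups nest and $\varphi^{-1}$ carries the inclusion over. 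This is shorter and more transparent; the paper's coset calculation is self-contained but carries more bookkeeping. Both are valid.

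However, your closing sentence is wrong: the same reasoning does \emph{not} handle the $H$-closure analogue, and indeed the paper states explicitly (in the remark immediately following the lemma) that the analogue fails, i.e.\ one can have $\clo{H}{G} \nleq \clo{H}{G'}$. The reason is visible in the very formula you cite. For $H$-closures the relevant subgroup is $\Sym(\Omega_1)\times\Sym\bigl(\bigcup_{i\ne 1}\Omega_i\bigr)$, where $\Omega_1$ is the single orbit of the identity coset; it is \emph{not} the full product $\prod_i \Sym(\Omega_i)$. When you pass from $G$ to $G'$, the new distinguished orbit $\Omega'_1$ absorbs $\Omega_1$ together with some other $\Omega_i$'s, but the group $\Sym(\Omega_1)\times\Sym\bigl(\bigcup_{i\ne 1}\Omega_i\bigr)$ contains permutations that move points of $\Omega'_1\setminus\Omega_1$ outside of $\Omega'_1$, so it is not contained in $\Sym(\Omega'_1)\times\Sym\bigl(\bigcup_{j\ne 1}\Omega'_j\bigr)$. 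The refinement argument needs the \emph{full} block structure on both sides, which is exactly what $\cH$-closure gives and $H$-closure does not. You should drop that final claim.
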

	
	\begin{proof}
		We know that
		\[
			\clo{\cH}{G} \subseteq \bigcap_{H' \in \cH} H' G.
		\]
		Fix some conjugate $H' \in \cH$ of $H$. For any $g \in G'$, we have that
		\[
			\clo{\cH}{G} \subseteq \bigcap_{H' \in \cH} H' G \subseteq g^{-1} H' g G,
		\]
		and so
		\[
			H' g (\clo{\cH}{G}) \subseteq H' g (g^{-1} H' g G) = H' g G \subseteq H' G'.
		\]
		As this holds for any $g \in G'$, it follows that $H' G' (\clo{\cH}{G}) \subseteq H' G'$. Since $H' G' G' = H' G'$, we therefore obtain that $H' \langle G', \clo{\cH}{G} \rangle = H' G'$, and so $\langle G', \clo{\cH}{G} \rangle \subseteq H' G'$. This holds for all $H' \in \cH$, and thus $\langle \clo{\cH}{G}, G' \rangle \subseteq \bigcap_{H' \in \cH} H' G'$. Since the group $\langle \clo{\cH}{G}, G' \rangle$ is an overgroup of $G'$, it follows that
		\[
			\langle \clo{\cH}{G}, G' \rangle \leq \clo{\cH}{G'},
		\]
		and so
		\[
			\clo{\cH}{G} \leq \langle \clo{\cH}{G}, G' \rangle \leq \clo{\cH}{G'},
		\]
		as desired.
	\end{proof}
	
	\begin{remark}
		The above result does not hold for $H$-closures; that is to say, with $G, G'$ and $H$ as above, we may have that $\clo{H}{G} \nleq \clo{H}{G'}$. This provides an additional justification for considering $\cH$-closures rather than $H$-closures throughout the remainder of the paper.
	\end{remark}
	
	The complementary statement for $\cH$ is as follows.
	
	\begin{lemma} \label{thm:closures:inclusion_preserving_in_H}
		Let $G$, $H$ and $H'$ be three subgroups of $\GL(\Zhat)$ such that $H \leq H'$. Then
		\[
			\clo{\cH}{G} \leq \clo{\cH'}{G}.
		\]
		In particular, if $G$ is $\cH'$-closed, it is also $\cH$-closed.
	\end{lemma}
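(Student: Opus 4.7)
The plan is to exploit the explicit description $\clo{\cH}{G} \subseteq \bigcap_{H'' \in \cH} H'' G$ recorded in the definition of the $\cH$-closure, together with the observation that conjugation by a fixed element $g \in \GL(\Zhat)$ simultaneously sends $H$ and $H'$ into $\cH$ and $\cH'$ respectively and preserves the inclusion $H \leq H'$. To avoid clashing with the hypothesis, I will use $H'' \in \cH$ and $K \in \cH'$ as indexing letters.

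First I will establish the set-theoretic inclusion
\[
	\bigcap_{H'' \in \cH} H'' G \;\subseteq\; \bigcap_{K \in \cH'} K G.
\]
To see this, fix an arbitrary $K \in \cH'$ and write $K = g H' g^{-1}$ for some $g \in \GL(\Zhat)$. Then $g H g^{-1} \in \cH$ and $g H g^{-1} \leq K$, whence $(g H g^{-1}) G \subseteq K G$, and in particular the intersection on the left is contained in $K G$. Since $K \in \cH'$ was arbitrary, the claimed inclusion follows. From here the main inequality is immediate: by the definition, $\clo{\cH}{G}$ is an overgroup of $G$ contained in the left-hand intersection, hence also in $\bigcap_{K \in \cH'} K G$; by the maximality property characterising $\clo{\cH'}{G}$ as the largest overgroup of $G$ inside this latter intersection, I conclude that $\clo{\cH}{G} \leq \clo{\cH'}{G}$.

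The second assertion then follows formally: if $G$ is $\cH'$-closed, then $\clo{\cH'}{G} = G$, and the chain $G \leq \clo{\cH}{G} \leq \clo{\cH'}{G} = G$ forces $\clo{\cH}{G} = G$, so $G$ is $\cH$-closed. I do not expect a genuine obstacle here; the only point requiring care is the indexing of conjugacy classes, since for a given $K \in \cH'$ one must choose the specific conjugate of $H$ produced by the \emph{same} conjugating element $g$ that realises $K = g H' g^{-1}$, rather than an unrelated element of $\cH$.
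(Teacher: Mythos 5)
Your proof is correct and follows essentially the same approach as the paper's: both derive $\bigcap_{H'' \in \cH} H''G \subseteq \bigcap_{K \in \cH'} KG$ from the inclusion $H \leq H'$ applied conjugate-by-conjugate, then invoke the maximality characterization of $\clo{\cH'}{G}$. The only difference is that the paper indexes the intersections directly by the conjugating element $g \in \GL(\Zhat)$ (writing $H^g G \subseteq H'^g G$), which sidesteps the bookkeeping subtlety you correctly flag at the end about choosing the matching conjugate of $H$.
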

	
	\begin{proof}
		Since $H \leq H'$, we have
		\[
			G \leq \clo{\cH}{G} \subseteq \bigcap_{g \in \GL(\Zhat)} H^{g} G \subseteq \bigcap_{g \in \GL(\Zhat)} H'^{g} G. 
		\]
		By the maximality of $\clo{\cH'}{G}$, it follows that $\clo{\cH}{G} \leq \clo{\cH'}{G}$.
		
		For the second part, note that if $G$ is $\cH'$-closed, then $\clo{\cH'}{G} = G$. Therefore, by the above, we obtain
		\[
			G \leq \clo{\cH}{G} \leq \clo{\cH'}{G} = G,
		\]
		and so $\clo{\cH}{G} = G$. Therefore, $G$ is also $\cH$-closed.
	\end{proof}
	
	Given knowledge of the subgroup $H$, we can construct explicit overgroups of $G$ which are $\cH$-equivalent to $G$, as the following result shows.
	
	\begin{lemma} \label{thm:closures:normal_product_equivalent}
		Let $G$ and $H$ be two subgroups of $\GL(\Zhat)$, and let $N$ be a normal subgroup of $\GL(\Zhat)$ such that $N \leq H$. Then $N G$ is a subgroup of $\GL(\Zhat)$ which is $\cH$-equivalent to $G$.
	\end{lemma}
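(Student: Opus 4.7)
The plan is to verify first that $NG$ is in fact a subgroup, and then check $\cH$-equivalence directly from the double coset characterization given in the definition. The key observation that makes both parts work is that, since $N$ is normal in $\GL(\Zhat)$, every conjugate $H' \in \cH$ satisfies $N = kNk^{-1} \leq kHk^{-1} = H'$, so $N$ is contained in every member of $\cH$, not just in $H$.

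For the first step, normality of $N$ gives $gN = Ng$ for every $g \in \GL(\Zhat)$, so $NG$ is closed under multiplication and inversion, hence is a subgroup of $\GL(\Zhat)$ (it equals $GN$). For the second step, I would fix an arbitrary $H' \in \cH$ and an arbitrary $g \in \GL(\Zhat)$, and compute
\[
    H' g (NG) = H' (gN) G = H' (Ng) G = (H'N) g G = H' g G,
\]
where the middle equality is normality of $N$, and the last uses $N \leq H'$ established above. This shows $HgG = Hg(NG)$ for every $g$ and every $H \in \cH$, which is precisely the double coset characterization of $\cH$-equivalence given in the definition.

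There is no real obstacle; the whole argument is just a careful chase of cosets. The only subtle point to highlight is the preliminary observation that normality of $N$ in $\GL(\Zhat)$ upgrades the hypothesis $N \leq H$ to the stronger statement that $N$ is contained in every conjugate of $H$, which is exactly what is needed in order to conclude $\cH$-equivalence and not merely $H$-equivalence.
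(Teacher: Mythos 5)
Your proof is correct and takes essentially the same approach as the paper: both verify the double-coset identity by a one-line coset chase that slides $N$ past $g$ via normality and then absorbs it into the Borel-type subgroup. The only cosmetic difference is that you quantify over all conjugates $H' \in \cH$ (using the equivalence ``$H'$-equivalent for all $H' \in \cH$''), while the paper fixes $H$ and inserts $g^{-1}g$ to use $gNg^{-1}=N$ directly; these are the same computation packaged slightly differently.
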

	
	\begin{proof}
		As $N$ is a normal subgroup of $\GL(\Zhat)$, it follows that $N G$ is a subgroup of $\GL(\Zhat)$. Fix an element $g \in \GL(\Zhat)$. Then, we have that
		\[
			H g N G = H g N g^{-1} g G = H N g G = H g G,
		\]
		where the last equality stems from the fact that $N \leq H$. As the above holds for all $g \in \GL(\Zhat)$, it follows that $G$ and $N G$ are $\cH$-equivalent.
	\end{proof}
	
	While seemingly innocuous, this result has very strong implications for the structure of $\cH$-closed subgroups. For instance, if $H$ is an open subgroup of $\GL(\Zhat)$, then the $\cH$-closure of any subgroup is also open.
	
	\begin{corollary} \label{thm:closures:open_closures_preserve_level}
		Let $H$ be an open subgroup of $\GL(\Zhat)$ of level $n$, and let $G$ be a subgroup of $\GL(\Zhat)$. Then, the $\cH$-closure of $G$ is an open subgroup of level dividing $n$. In particular, there are finitely many $\cH$-closed subgroups of $\GL(\Zhat)$.
	\end{corollary}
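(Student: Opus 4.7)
The plan is to exhibit an explicit open normal subgroup contained in every $\cH$-closure, thereby forcing the closure to be open of bounded level. The key input will be Lemma \ref{thm:closures:normal_product_equivalent}, together with the fact that the kernel of the reduction map $\pi_{n}$ is a normal subgroup of $\GL(\Zhat)$.

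First I would observe that, since $H$ has level $n$, by definition we have $\ker(\pi_{n}) \leq H$. Moreover, $\ker(\pi_{n})$ is a normal subgroup of $\GL(\Zhat)$, because it is the kernel of the group homomorphism $\pi_{n}$. Applying Lemma \ref{thm:closures:normal_product_equivalent} with $N = \ker(\pi_{n})$, the product $\ker(\pi_{n}) \cdot G$ is a subgroup of $\GL(\Zhat)$ which is $\cH$-equivalent to $G$. Since this product contains $G$, the characterization of $\clo{\cH}{G}$ as the largest overgroup of $G$ which is $\cH$-equivalent to $G$ yields
\[
    \clo{\cH}{G} \geq \ker(\pi_{n}) \cdot G \geq \ker(\pi_{n}).
\]
Therefore $\clo{\cH}{G}$ contains $\ker(\pi_{n})$, which is precisely the condition that $\clo{\cH}{G}$ is an open subgroup of $\GL(\Zhat)$ of level dividing $n$.

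For the final statement, I would note that every $\cH$-closed subgroup $G$ of $\GL(\Zhat)$ is equal to its own $\cH$-closure $\clo{\cH}{G}$, and hence by the argument above contains $\ker(\pi_{n})$. Under the bijection between open subgroups of $\GL(\Zhat)$ of level dividing $n$ and subgroups of the finite group $\GL(\Z[n])$, the set of $\cH$-closed subgroups injects into the finite set of subgroups of $\GL(\Z[n])$, and is thus itself finite.

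I do not anticipate any real obstacle here: the entire argument is a direct application of Lemma \ref{thm:closures:normal_product_equivalent} once one identifies $\ker(\pi_{n})$ as the correct normal subgroup to feed in. The only point requiring a moment's care is the distinction between "$\cH$-equivalent overgroup" and "$\cH$-closure", which is handled cleanly by the maximality characterization already recorded in the definition.
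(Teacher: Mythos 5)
Your proof is correct and takes essentially the same approach as the paper's: identify $\ker(\pi_n)$ as a normal subgroup contained in $H$, apply Lemma \ref{thm:closures:normal_product_equivalent} to conclude $\ker(\pi_n)G$ is $\cH$-equivalent to $G$, deduce $\ker(\pi_n) \leq \clo{\cH}{G}$, and invoke the bijection with subgroups of $\GL(\Z[n])$ for finiteness.
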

	
	\begin{proof}
		Since the subgroup $H$ has level $n$, it contains the kernel $K = \ker(\pi_{n})$ of the reduction mod-$n$ map $\pi_{n} : \GL(\Zhat) \to \GL(\Z[n])$. Since $K$ is a normal subgroup, by Lemma \ref{thm:closures:normal_product_equivalent}, we have that $G$ and $K G$ are $\cH$-equivalent subgroups of $\GL(\Zhat)$. In particular, the $\cH$-closure of $G$ contains the product $KG$, and in particular, contains the kernel $K$ of $\pi_{n}$. Therefore, the $\cH$-closure of $G$ has level dividing $n$.
		
		The second statement follows from the fact that the subgroups of $\GL(\Zhat)$ of level dividing $n$ are in bijection with the subgroups of $\GL(\Z[n])$, of which there are finitely many.
	\end{proof}
	
	Similarly, given knowledge of the subgroup $G$, we can place restrictions on its $\cH$-closure $\clo{\cH}{G}$. For instance, by definition, the $\cH$-closure of an open subgroup $G$ of $\GL(\Zhat)$ contains $G$, and therefore is itself an open subgroup. Assuming some conditions on the subgroup $H$, we can obtain much stronger results, such as the following.
	
	\begin{lemma} \label{thm:closures:determinant_gives_sl2_level}
		Let $H$ be a subgroup of $\GL(\Zhat)$ such that $\det(H \cap \ker(\pi_{m})) = 1 + m\Zhat$ for all $m \geq 1$. Let $G$ be a subgroup of $\GL(\Zhat)$ with $\SL$-level $n$. Then the $\cH$-closure of $G$ is an open subgroup with $\GL$-level dividing $n$.
	\end{lemma}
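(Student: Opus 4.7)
The plan is to prove directly that $\clo{\cH}{G}$ contains the full kernel $K_n := \ker(\pi_n)$ of the reduction map $\pi_n : \GL(\Zhat) \to \GL(\Z[n])$, which suffices since $K_n \leq \clo{\cH}{G}$ immediately implies that $\clo{\cH}{G}$ is open of $\GL$-level dividing $n$. To do this, I would use the explicit description of the $\cH$-closure given in Remark \ref{rmk:closures:explicit_description}, which identifies $\clo{\cH}{G}$ with the largest overgroup of $G$ contained in the intersection $\bigcap_{H' \in \cH} H' G = \bigcap_{g \in \GL(\Zhat)} g H g^{-1} G$.

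The first key observation is that the $\SL$-level hypothesis on $G$ translates to the inclusion $K_n \cap \SL(\Zhat) \leq G$. The second key observation is that the hypothesis on determinants is conjugation-invariant: because $K_n$ is a normal subgroup of $\GL(\Zhat)$, for any $g \in \GL(\Zhat)$ we have $g H g^{-1} \cap K_n = g (H \cap K_n) g^{-1}$, so $\det(g H g^{-1} \cap K_n) = \det(H \cap K_n) = 1 + n \Zhat$. These two facts combine cleanly: given any $k \in K_n$ and any $g \in \GL(\Zhat)$, one chooses $h \in g H g^{-1} \cap K_n$ with $\det(h) = \det(k)$, so that $h^{-1} k \in K_n \cap \SL(\Zhat) \leq G$. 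Then $k = h \cdot (h^{-1} k) \in (g H g^{-1}) \cdot G$, establishing $K_n \subseteq \bigcap_{g \in \GL(\Zhat)} g H g^{-1} G$.

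Finally, since $K_n$ is normal in $\GL(\Zhat)$, the product $K_n G$ is a subgroup of $\GL(\Zhat)$ containing $G$; and from the previous step any element $kg \in K_n G$ lies in $g H g^{-1} G \cdot G = g H g^{-1} G$ for each $g$, so $K_n G$ is contained in $\bigcap_{H' \in \cH} H' G$. By the maximality in the definition of the $\cH$-closure, $K_n G \leq \clo{\cH}{G}$, hence $K_n \leq \clo{\cH}{G}$, as required.

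I do not anticipate a genuine obstacle here; the only subtle point is ensuring that the determinant-surjectivity hypothesis is used at every conjugate $g H g^{-1}$, not just at $H$ itself, which is precisely why the normality of $K_n$ (rather than just of $K_n \cap \SL(\Zhat)$) is invoked. One could alternatively try to package this via Lemma \ref{thm:closures:normal_product_equivalent} applied to the normal subgroup $K_n \cap \SL(\Zhat) \leq G$, but this only recovers the $\SL$-level bound and does not directly yield control of the $\GL$-level; the det-surjectivity hypothesis is exactly what bridges the two.
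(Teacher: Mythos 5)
Your proof is correct and takes essentially the same approach as the paper: both arguments reduce to showing $\ker(\pi_n) \leq \clo{\cH}{G}$ by combining the inclusion $\ker(\pi_n) \cap \SL(\Zhat) \leq G$ with the determinant-surjectivity hypothesis to factor any element of $\ker(\pi_n)$ through a conjugate of $H$ and an element of $G$. The paper packages the key step as the group equality $(H \cap \ker(\pi_n))(\ker(\pi_n)\cap\SL(\Zhat)) = \ker(\pi_n)$ and then conjugates, whereas you carry out the equivalent element-wise factorization $k = h(h^{-1}k)$ directly at each conjugate; the content is the same.
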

	
	\begin{proof}
		Since $G$ has $\SL$-level $n$, $G$ contains the subgroup $K = \ker(\pi_{n}) \cap \SL(\Zhat) \leq \GL(\Zhat)$. Note that $K$ is a normal subgroup of $\GL(\Zhat)$, and so
		\[
		\bigcap_{g \in \GL(\Zhat)} H^{g} G = \bigcap_{g \in \GL(\Zhat)} H^{g} K G = \bigcap_{g \in \GL(\Zhat)} (H K)^{g} G.
		\]
		By assumption, we have $\det(H \cap \ker(\pi_{n})) = 1 + n \Zhat = \det(\ker(\pi_{n}))$. Therefore, we have
		\begin{align*}
			H K \cap \ker(\pi_{n}) &= (H \cap \ker(\pi_{n})) K = \ker(\pi_{n}).
		\end{align*}
		Therefore, we have
		\[
			\bigcap_{g \in \GL(\Zhat)} H^{g} G = \bigcap_{g \in \GL(\Zhat)} (H K)^{g} G \supseteq \bigcap_{g \in \GL(\Zhat)} (\ker(\pi_{n}))^{g} G = \ker(\pi_{n}) G.
		\]
		The product $\ker(\pi_{n}) G$ is a subgroup of $\GL(\Zhat)$, and so the $\cH$-closure of $G$ contains the product $\ker(\pi_{n}) G$. In particular, we have $\ker(\pi_{n}) \leq \clo{\cH}{G}$, and so the $\GL$-level of $\clo{\cH}{G}$ divides $n$.
	\end{proof}
	
	While all of the results above consider a single subgroup $H$, our main objective will be to determine the possible $\cB_{0}(n)$- or $\cB_{1}(n)$ closures of $\cG_{j}$, for all $n \geq 1$. The subgroups $B_{0}(n)$ or $B_{1}(n)$ are closely related as $n$ varies, and this relationship translates to the corresponding closures. More precisely, we have the following result. Recall that the group $H(m)$ is defined to be the product $H \ker \pi_{m}$, for any subgroup $H$ of $\GL(\Zhat)$.
	
	\begin{lemma} \label{thm:closures:family_closures}
		Fix $m \geq 1$. Let $H$ be a subgroup of $\GL(\Zhat)$, and let $k$ be the level of $H(m)$. Let $G$ be an open $\cH$-closed subgroup of $\GL(\Zhat)$ of level $n$. Then $G$ is $\cH(m)$-closed if and only if $n$ divides $k$.
	\end{lemma}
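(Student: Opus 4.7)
The plan is to prove the two directions separately, with both directions ultimately reducing to an analysis of how the kernel $\ker \pi_m$ interacts with $G$, exploiting the fact that $H(m) = H \ker \pi_m$ by definition.

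For the forward direction, suppose $G$ is $\cH(m)$-closed. I would first observe that $H(m)$ is an open subgroup of $\GL(\Zhat)$, since it contains $\ker \pi_m$, and by hypothesis it has level $k$. Applying Corollary \ref{thm:closures:open_closures_preserve_level} with $H$ replaced by $H(m)$, the $\cH(m)$-closure of any subgroup has level dividing $k$. Since $G = \clo{\cH(m)}{G}$ by assumption, its level $n$ divides $k$.

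For the backward direction, suppose $n \mid k$. Since $k$ is the level of $H(m)$ and $H(m) \supseteq \ker \pi_m$, we have $k \mid m$, and hence $n \mid m$. This means $\ker \pi_m \subseteq \ker \pi_n \subseteq G$, where the last inclusion holds because $G$ has level $n$. Now, for any $g \in \GL(\Zhat)$, using the normality of $\ker \pi_m$ in $\GL(\Zhat)$ together with the inclusion $\ker \pi_m \subseteq G$, I would compute
\[
    H(m)^g G = (H \ker \pi_m)^g G = H^g \ker \pi_m G = H^g G.
\]
Intersecting over all $g \in \GL(\Zhat)$ gives
\[
    \bigcap_{g \in \GL(\Zhat)} H(m)^g G = \bigcap_{g \in \GL(\Zhat)} H^g G,
\]
so by the explicit description in Remark \ref{rmk:closures:explicit_description} (or directly from the definition as the largest overgroup of $G$ contained in the above intersection), we obtain $\clo{\cH(m)}{G} = \clo{\cH}{G}$. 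Since $G$ is $\cH$-closed, the right-hand side equals $G$, so $G$ is $\cH(m)$-closed.

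The argument is largely formal once one notices the key observation that $n \mid k$ forces $\ker \pi_m \subseteq G$, and I do not anticipate a substantial obstacle. The only mild subtlety is recognizing that the two inputs to the argument — the level $k$ of $H(m)$ divides $m$, and $n \mid k$ implies $n \mid m$ — combine to put $\ker \pi_m$ inside $G$, which then collapses $H(m)^g G$ to $H^g G$ and reduces everything to the hypothesis that $G$ is already $\cH$-closed.
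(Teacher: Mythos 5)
Your proof is correct and follows essentially the same strategy as the paper: the forward direction via Lemma~\ref{thm:closures:open_closures_preserve_level} is identical, and the backward direction hinges on the same key observation that containment of the relevant kernel in $G$ collapses $H(m)^{g}G$ to $H^{g}G$. Your execution is actually a bit cleaner in two places: you obtain $k \mid m$ immediately from $\ker\pi_{m} \subseteq H(m)$, whereas the paper derives it more indirectly through the identity $H(m) = H\ker\pi_{(m,k)}$; and you argue directly that the defining intersection $\bigcap_{g}H(m)^{g}G$ equals $\bigcap_{g}H^{g}G$, hence $\clo{\cH(m)}{G} = \clo{\cH}{G}$, whereas the paper fixes an arbitrary $\cH(m)$-equivalent overgroup $G'$ and manipulates double cosets to show $G' = G$. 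Both are sound; yours avoids some bookkeeping.
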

	
	\begin{proof}
		Suppose first that $G$ is $\cH(m)$-closed. Therefore, by Lemma \ref{thm:closures:open_closures_preserve_level}, the level of $\clo{\cH(m)}{G}$ divides $k$. Since $G$ is $\cH(m)$-closed, it follows that $G = \clo{\cH(m)}{G}$, and so $n$ divides $k$.
		
		Suppose now that $n$ divides $k$. Let $G'$ be a subgroup of $\GL(\Zhat)$ such that $G \leq G'$ and $G'$ is $\cH(m)$-equivalent to $G$. As $G$ and $G'$ are $\cH(m)$-equivalent, we have that
		\[
			H(m) g G = H(m) g G',
		\]
		for all $g \in \GL(\Zhat)$. Note that since $H(m)$ has level $k$, we have
		\[
			H(m) = H(m) \ker(\pi_{k}) = H \ker(\pi_{m}) \ker(\pi_{k}) = H \ker(\pi_{(m, k)}),
		\]
		where the last equality follows from \cite[Lemma 3.5]{terao2025isolated}. As $H(m)$ has level exactly $k$, we must have that $(m, k)$ is a multiple of $k$, and so $(m, k) = k$. Thus,
		\[
			H(m) = H \ker(\pi_{k}),
		\]
		and so
		\[
			H \ker(\pi_{k}) g G = H \ker(\pi_{k}) g G',
		\]
		for all $g \in \GL(\Zhat)$. As the kernel $\ker \pi_{k}$ is a normal subgroup of $\GL(\Zhat)$, it follows that
		\[
			H g \ker(\pi_{k}) G = H g \ker(\pi_{k}) G',
		\]
		for all $g \in \GL(\Zhat)$. Since $G$ has level $n$ dividing $k$, we have that $\ker(\pi_{k}) \leq G$ and $\ker(\pi_{k}) G = G$. Since $G \leq G'$, it follows that $\ker(\pi_{k}) \leq G'$, and similarly, $\ker(\pi_{k}) G' = G'$. Thus, we have
		\[
			H g G = H g G',
		\]
		for all $g \in \GL(\Zhat)$, and so $G$ and $G'$ are $\cH$-equivalent. Since $G$ is $\cH$-closed, it follows that $G = G'$. Therefore, $G$ is $\cH(m)$-closed, as desired.
	\end{proof}
	
	\pagebreak
	
	When defining the notions of $H$-equivalence and $H$-closure, we have so far considered the set of double cosets $H \backslash {\GL(\Zhat)} / G$ as the set of orbits of $H \backslash {\GL(\Zhat)}$ under right multiplication by $G$. However, we can equally consider the set of double cosets $H \backslash {\GL(\Zhat)} / G$ as the set of orbits of ${\GL(\Zhat)} / G$ under left multiplication by $H$. Under this duality, the notion of $H$-equivalence can still be interpreted as giving strong properties on the set of orbits, such as the following equality concerning the sizes of the respective orbits.
	
	\begin{lemma} \label{thm:closures:equivalent_degree_formula}
		Let $G$ be a subgroup of $\GL(\Zhat)$, and let $H$ and $H'$ be two open subgroups of $\GL(\Zhat)$ such that $H \leq H'$. Fix $g \in \GL(\Zhat)$. Then, $H$ and $H'$ are $g G g^{-1}$-equivalent if and only if
		\[
			|(H g) \cdot G| = [H' : H] |(H' g) \cdot G|.
		\]
	\end{lemma}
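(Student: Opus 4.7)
The plan is to translate both conditions into equalities of subgroup indices and show they coincide via a counting argument.

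First I would compute the orbit sizes via orbit-stabilizer. The stabilizer of the coset $Hg$ under the right action of $G$ is
\[
    \operatorname{Stab}_{G}(Hg) = \{x \in G : Hgx = Hg\} = \{x \in G : gxg^{-1} \in H\} = G \cap g^{-1}Hg,
\]
so $|(Hg) \cdot G| = [G : G \cap g^{-1}Hg]$. Conjugating by $g$ (an automorphism of $\GL(\Zhat)$) gives the more convenient form $[gGg^{-1} : gGg^{-1} \cap H]$, and similarly $|(H'g) \cdot G| = [gGg^{-1} : gGg^{-1} \cap H']$. Using the subgroup tower $gGg^{-1} \cap H \leq gGg^{-1} \cap H' \leq gGg^{-1}$, the desired equation $|(Hg)\cdot G| = [H':H]\,|(H'g)\cdot G|$ is equivalent to
\[
    [gGg^{-1} \cap H' : gGg^{-1} \cap H] = [H' : H].
\]

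Next I would unpack the equivalence hypothesis. By the definition of $K$-equivalence applied with $K = gGg^{-1}$, the subgroups $H$ and $H'$ are $gGg^{-1}$-equivalent if and only if $(gGg^{-1})H = (gGg^{-1})H'$. The inclusion $H \leq H'$ makes one containment automatic, so the condition reduces to $H' \subseteq (gGg^{-1}) \cdot H$, or equivalently $H' = (gGg^{-1} \cap H') \cdot H$ (the intersection with $H'$ being forced once we write any $h' \in H'$ as $xh$ with $h \in H \leq H'$).

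To close the loop, consider the map of sets
\[
    \varphi : gGg^{-1} \cap H' \longrightarrow H \backslash H', \qquad x \mapsto Hx.
\]
Its fibers are left cosets of $gGg^{-1} \cap H$ in $gGg^{-1} \cap H'$, so the image has cardinality $[gGg^{-1} \cap H' : gGg^{-1} \cap H]$. Surjectivity of $\varphi$ is exactly the statement $H' = H \cdot (gGg^{-1} \cap H')$, which (by the symmetric variant of the computation in the previous paragraph, or by the counting identity $|H \cdot S| = |S \cdot H|$ for $S$ a union of $(H \cap \text{-})$-cosets) is equivalent to $H' = (gGg^{-1} \cap H') \cdot H$. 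Thus the index equality $[gGg^{-1} \cap H' : gGg^{-1} \cap H] = [H' : H]$ from the first step is equivalent to the $gGg^{-1}$-equivalence of $H$ and $H'$ from the second step, completing the proof.

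The main obstacle is purely bookkeeping: keeping straight which side the action is on and converting freely between left and right coset formulations. The underlying mathematics is elementary orbit-stabilizer combined with the fact that $H \cdot S = H \cdot (S \cap H')$ whenever $S \subseteq H'$ and $H \leq H'$.
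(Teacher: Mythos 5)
Your proof is correct, and it takes a route that is genuinely different from the paper's. The paper translates $gGg^{-1}$-equivalence into an equality of double cosets $H g G = H' g G$ (via left multiplication by $g^{-1}$ and then inversion), and then counts $H$-cosets versus $H'$-cosets inside that common double coset; it relegates the reverse implication to ``analogous.'' You instead invoke orbit--stabilizer to express both orbit sizes as indices inside $gGg^{-1}$, so that the displayed equation collapses to the index condition $[gGg^{-1} \cap H' : gGg^{-1} \cap H] = [H' : H]$, and you then identify this condition with the $gGg^{-1}$-equivalence through the explicit map $\varphi : gGg^{-1} \cap H' \to H \backslash H'$ and the observation that $H' = (gGg^{-1}\cap H')H$ iff $H' = H(gGg^{-1}\cap H')$. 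Your argument has two advantages: it produces the reusable orbit-size formula $|(Hg)\cdot G| = [gGg^{-1} : gGg^{-1}\cap H]$ as a byproduct, and it treats both directions of the equivalence in a single chain of bi-implications rather than deferring one direction. The paper's proof, by contrast, stays entirely at the level of coset decompositions of $\GL(\Zhat)$ and avoids the detour through stabilizers, which some may find more direct. One minor terminological slip: the fibers of $\varphi$ are the right cosets $(gGg^{-1}\cap H)x$ rather than left cosets, but this does not affect the count.
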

	
	\begin{proof}
		As $H$ and $H'$ are $g G g^{-1}$-equivalent, we have that the double cosets $(g G g^{-1}) H$ and $(g G g^{-1}) H'$ are equal. As left multiplication by $g \in \GL(\Zhat)$ is a bijection on $\GL(\Zhat)$, it follows that the double cosets $G g^{-1} H$ and $G g^{-1} H'$ are equal. Moreover, taking inverses, we obtain that the double cosets $H g G$ and $H' g G$ are also equal. Since $H \leq H'$, we have that each right coset of $H'$ is a union of $[H' : H]$ right cosets of $H$. Thus, we have
		\begin{align*}
			|(H g) \cdot G| &= |\{ H g' \in H \backslash {\GL(\Zhat)} : H g' \subset H g G \}| \\
			&= |\{ H g' \in H \backslash {\GL(\Zhat)} : H g' \subset H' g G \}| \\
			&= \sum_{H' g' \subset H' g G} |\{ H g'' \in H \backslash {\GL(\Zhat)} : H g'' \subset H' g' \}| \\
			&= \sum_{H' g' \subset H' g G} [H' : H] \\
			&= [H' : H] |(H' g) \cdot G|.
		\end{align*}
		The other direction is analogous.
	\end{proof}
	
	This result gives the connection, detailed in Section \ref{sec:introduction:connections}, between the notions of $H$-equivalence and that of primitive points on modular curves.
	
	\begin{corollary} \label{thm:closures:primitive_points_connection}
		Let $H$ and $H'$ be open subgroups of $\GL(\Zhat)$ such that $H \leq H'$, and let $f$ denote the inclusion morphism $f : X_{H} \to X_{H'}$. Let $j \in \Qbar$ be such that $j \notin \{0, 1728\}$, let $E / \Q(j)$ be an elliptic curve such that $j(E) = j$, and let $\alpha$ be a profinite level structure on $E$. Let $x \in X_{H}$ be the closed point corresponding to the $G_{\Q}$-orbit of the geometric point $[(E, [\alpha]_{H})] \in X_{H}(\Qbar)$. Then, we have that
		\[
			\deg(x) = \deg(f) \cdot \deg(f(x))
		\]
		if and only if $\pm H$ and $\pm H'$ are $G_{E, \alpha}$-equivalent. In particular, we have
		\[
			\ddeg_{X_{H}}(\overline{j}) = \ldblbrace d \cdot \deg(f) : d \in \ddeg_{X_{H'}}(\overline{j}) \rdblbrace
		\]
		if and only if $\pm H$ and $\pm H'$ are $\cG_{j}$-equivalent.
	\end{corollary}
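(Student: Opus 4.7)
The plan is to derive both equivalences from the orbit-theoretic description of degrees provided by Corollary \ref{thm:preliminaries:modular_curve_point_degrees}, combined with the criterion of Lemma \ref{thm:closures:equivalent_degree_formula}.

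For the first equivalence, since $j \notin \{0, 1728\}$ forces $A_{E,\alpha} = \{\pm I\}$, Theorem \ref{thm:preliminaries:galois_action_bijection} provides a $G_{E,\alpha}$-equivariant bijection between $\{y \in X_{H}(\Qbar) : j(y) = j\}$ and $\pm H \backslash \GL(\Zhat)$ under which $[(E, [\alpha]_{H})]$ is sent to the identity coset. Hence $x$ corresponds to the $G_{E,\alpha}$-orbit of $\pm H$, and Corollary \ref{thm:preliminaries:modular_curve_point_degrees} gives $\deg(x) = [\Q(j):\Q]\,|(\pm H) \cdot G_{E,\alpha}|$; the analogous computation on $X_{H'}$ yields $\deg(f(x)) = [\Q(j):\Q]\,|(\pm H') \cdot G_{E,\alpha}|$. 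Since $\deg(f) = [\pm H' : \pm H]$, the equality $\deg(x) = \deg(f)\deg(f(x))$ rewrites as $|(\pm H) \cdot G_{E,\alpha}| = [\pm H' : \pm H]\,|(\pm H') \cdot G_{E,\alpha}|$, which by Lemma \ref{thm:closures:equivalent_degree_formula} applied with $g = 1$ is exactly the statement that $\pm H$ and $\pm H'$ are $G_{E,\alpha}$-equivalent.

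For the fiber statement I would fix a representative $G \in \cG_{j}$ and consider the natural surjection $\pi : \pm H \backslash \GL(\Zhat) / G \to \pm H' \backslash \GL(\Zhat) / G$. Unfolding the definition of $\cG_{j}$-equivalence (and inverting to match the conventions of Section \ref{sec:preliminaries:gl2}), $\pm H$ and $\pm H'$ are $\cG_{j}$-equivalent precisely when $\pm H g G = \pm H' g G$ as subsets of $\GL(\Zhat)$ for every $g$, equivalently when every $\pm H'$-double coset coincides with a single $\pm H$-double coset, equivalently when $\pi$ is a bijection. When this holds, each upstairs orbit $\Omega$ coincides with its image $\Omega'$ as a subset of $\GL(\Zhat)$, and counting $\pm H$-cosets inside $\pm H'$-cosets gives $|\Omega| = [\pm H' : \pm H]\,|\Omega'|$; the multiset equality then follows from Corollary \ref{thm:preliminaries:modular_curve_point_degrees}.

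The converse direction is where the argument requires the most care, and I expect it to be the main obstacle. The clean route is to observe that the two multisets appearing in the fiber identity have cardinalities $|\pm H \backslash \GL(\Zhat) / G|$ and $|\pm H' \backslash \GL(\Zhat) / G|$ respectively, so multiset equality forces these finite cardinalities to agree. Since $\pi$ is a surjection between finite sets, equal cardinalities upgrade it to a bijection, and the previous paragraph then yields the $\cG_{j}$-equivalence of $\pm H$ and $\pm H'$.
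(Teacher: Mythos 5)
Your proposal is correct and follows essentially the same route as the paper: both reduce to the orbit-theoretic description of degrees from Corollary \ref{thm:preliminaries:modular_curve_point_degrees} and apply Lemma \ref{thm:closures:equivalent_degree_formula} at the identity coset for the pointwise equivalence. For the fiber statement, the paper simply remarks that $\cG_{j}$-equivalence is the conjunction of $G_{E,\beta}$-equivalence over all profinite level structures $\beta$ and leaves the multiset bookkeeping implicit; you instead work directly with the natural surjection $\pi : \pm H \backslash \GL(\Zhat) / G \to \pm H' \backslash \GL(\Zhat) / G$ and close the converse with the cardinality argument, which is a more explicit and self-contained rendering of the same underlying idea rather than a genuinely different method.
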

	
	\begin{proof}
		By the proofs of Theorem \ref{thm:preliminaries:galois_action_bijection} and Corollary \ref{thm:preliminaries:modular_curve_point_degrees}, we have that
		\[
			\deg(x) = [\Q(j) : \Q] |(\pm H) \cdot G_{E, \alpha}|,
		\]
		and
		\[
			\deg(f(x)) = [\Q(j) : \Q] |(\pm H') \cdot G_{E, \alpha}|.
		\]
		By Lemma \ref{thm:closures:equivalent_degree_formula}, we have that $\pm H$ and $\pm H'$ are $G_{E, \alpha}$-equivalent if and only if
		\[
			|(\pm H) \cdot G_{E, \alpha}| = [\pm H' : \pm H] |(\pm H') \cdot G_{E, \alpha}|.
		\]
		Therefore, we obtain that $\pm H$ and $\pm H'$ are $G_{E, \alpha}$-equivalent if and only if
		\begin{align*}
			\deg(x) &= [\Q(j) : \Q] [\pm H' : \pm H] |(\pm H') \cdot G_{E, \alpha}| \\
			&= [\pm H' : \pm H] \deg(f(x)) \\
			&= \deg(f) \cdot \deg(f(x)).
		\end{align*}
		The second statement of the theorem follows from the fact that $\pm H$ and $\pm H'$ are $\cG_{j}$-equivalent if and only if they are $G$-equivalent for all $G \in \cG_{j}$; that is to say, they are $G_{E, \beta}$-equivalent for all profinite level structures $\beta$ on $E$.
	\end{proof}
	
	\section{Degrees of rational fibers on \texorpdfstring{$X_{0}(n)$}{X\_0(n)} and \texorpdfstring{$X_{1}(n)$}{X\_1(n)} which occur infinitely often} \label{sec:infinite_degrees}
	
	Now that the notion of $\cH$-closed subgroups has been established and thoroughly studied, we turn our attention to the main subject of this paper. In this section, we determine the degrees of the fibers with rational $j$-invariant on the modular curves $X_{0}(n)$ and $X_{1}(n)$ which occur infinitely often. As a consequence, we then obtain Theorems \ref{thm:introduction:infinite_x0_degrees} and \ref{thm:introduction:infinite_x1_degrees}.
	
	We divide the problem into three main steps. Firstly, building on Corollary \ref{thm:closures:modular_curve_point_degrees}, we show that it suffices to determine the conjugacy classes of subgroups which occur as the $\cB_{0}(n)$- or $\cB_{1}(n)$-closure of $\cG_{j}$ for infinitely many rational $j$-invariants $j \in \Q$. Secondly, we determine the set of conjugacy classes $\cG$ of $B_{0}(n)$- and $B_{1}(n)$-closed open subgroups of $\GL(\Zhat)$ such that the modular curve $X_{\cG}$ has infinitely many rational points. Finally, we determine which such conjugacy classes occur as the $\cB_{0}(n)$- or $\cB_{1}(n)$-closure of $\cG_{j}$ for infinitely many rational $j$-invariants $j \in \Q$, and compute the degrees of the corresponding fibers of $X_{0}(n)$ and $X_{1}(n)$.
	
	\subsection{Reducing to \texorpdfstring{$\cB_{0}(n)$}{B\_0(n)}- and \texorpdfstring{$\cB_{1}(n)$}{B\_1(n)}-closures which occur infinitely often} \label{sec:infinite_degrees:reduction_to_infinite_closures}
	
	As explained above, the first step is to show that it suffices to determine the conjugacy classes of subgroups of $\GL(\Zhat)$ which occur as the $\cB_{0}(n)$- or $\cB_{1}(n)$-closure of $\cG_{j}$ for infinitely many rational $j$-invariants $j \in \Q$. This is achieved by the following result.
	
	\begin{theorem} \label{thm:infinite_degrees:reduction_to_infinite_closures}
		Let $n \geq 1$. Then, we have
		\begin{align*}
			&\{M : \exists^{\infty} j \in \Q \text{ with } M = \ddeg_{X_{0}(n)}(j)\} \\
			& \quad = \{\ldblbrace \left| \Omega \right| : \Omega \in B_{0}(n) \backslash {\GL(\Zhat)} / \cG \rdblbrace : \exists^{\infty} j \in \Q \text{ with } \clo{\cB_{0}(n)}{\cG_{j}} = \cG\},
		\end{align*}
		and
		\begin{align*}
			&\{M : \exists^{\infty} j \in \Q \text{ with } M = \ddeg_{X_{1}(n)}(j)\} \\
			& \quad = \{\ldblbrace \left| \Omega \right| : \Omega \in B_{1}(n) \backslash {\GL(\Zhat)} / \cG \rdblbrace : \exists^{\infty} j \in \Q \text{ with } \clo{\cB_{1}(n)}{\cG_{j}} = \cG\}.
		\end{align*}
	\end{theorem}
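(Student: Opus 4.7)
The plan is to derive the theorem as a direct consequence of Corollary~\ref{thm:closures:modular_curve_point_degrees} combined with a pigeonhole argument, once we know that there are only finitely many possible $\cB_{0}(n)$- and $\cB_{1}(n)$-closures. Since the proof is essentially bookkeeping, I expect no substantive obstacle; the real difficulty of the section is deferred to the subsequent task of actually identifying which conjugacy classes occur.

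First, I would note that $-I \in B_{0}(n)$ and $-I \in B_{1}(n)$, so $\pm B_{i}(n) = B_{i}(n)$ for $i = 0, 1$, and that for any $j \in \Q$ we have $[\Q(j) : \Q] = 1$. Plugging these into Corollary~\ref{thm:closures:modular_curve_point_degrees} yields
\[
	\ddeg_{X_{0}(n)}(j) = \ldblbrace |\Omega| : \Omega \in B_{0}(n) \backslash {\GL(\Zhat)} / \clo{\cB_{0}(n)}{\cG_{j}} \rdblbrace,
\]
and analogously for $X_{1}(n)$. In particular, the multiset $\ddeg_{X_{0}(n)}(j)$ depends on $j \in \Q$ only through the conjugacy class $\clo{\cB_{0}(n)}{\cG_{j}}$ (and likewise for $X_{1}(n)$), since the multiset of orbit sizes on $B_{0}(n) \backslash {\GL(\Zhat)}$ is invariant under conjugation of the acting group.

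From this, the inclusion $\supseteq$ is immediate: if infinitely many $j \in \Q$ share a common closure $\cG = \clo{\cB_{0}(n)}{\cG_{j}}$, then those same $j$ share the multiset $\ldblbrace |\Omega| : \Omega \in B_{0}(n) \backslash {\GL(\Zhat)} / \cG \rdblbrace$. For the reverse inclusion $\subseteq$, the key observation is that by Corollary~\ref{thm:closures:open_closures_preserve_level} applied to the open subgroup $B_{0}(n)$ of level $n$, every $\cB_{0}(n)$-closure of a subgroup of $\GL(\Zhat)$ is open of level dividing $n$. Via the bijection with subgroups of $\GL(\Z[n])$, there are only finitely many such subgroups, and hence finitely many conjugacy classes thereof. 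Thus, if a multiset $M$ arises as $\ddeg_{X_{0}(n)}(j)$ for infinitely many $j \in \Q$, the pigeonhole principle produces a single conjugacy class $\cG$ with $\clo{\cB_{0}(n)}{\cG_{j}} = \cG$ for infinitely many $j$, and by the displayed equality the associated multiset of orbit sizes is exactly $M$.

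The argument for $X_{1}(n)$ is the same, with $B_{0}(n)$ and $\cB_{0}(n)$ replaced throughout by $B_{1}(n)$ and $\cB_{1}(n)$. As indicated, no step is hard here: the theorem is a structural reformulation whose utility is that it reduces the classification of infinitely-occurring fiber degrees to the purely group-theoretic problem of classifying the $\cB_{0}(n)$- and $\cB_{1}(n)$-closures of $\cG_{j}$ that arise for infinitely many rational $j$, which is the subject of the remaining subsections.
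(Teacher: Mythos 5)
Your proof is correct and follows essentially the same route as the paper: rewrite $\ddeg_{X_i(n)}(j)$ via Corollary~\ref{thm:closures:modular_curve_point_degrees}, note that there are only finitely many $\cB_i(n)$-closed subgroups by Corollary~\ref{thm:closures:open_closures_preserve_level}, and pigeonhole. The only imprecision is that you apply Corollary~\ref{thm:closures:modular_curve_point_degrees} to ``any $j \in \Q$'' whereas it is stated only for $j \notin \{0, 1728\}$; the paper handles this by first passing to $\Q \setminus \{0, 1728\}$, which is harmless since $\exists^\infty$ is insensitive to removing a finite set, and you should make this step explicit.
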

	
	\begin{proof}
		We solely consider the case of $X_{0}(n)$, as the argument for $X_{1}(n)$ is entirely analogous.
		
		As the set $\{0, 1728\}$ is finite, it suffices to prove that
		\begin{align*}
			&\{M : \exists^{\infty} j \in \Q \setminus \{0, 1728\} \text{ with } M = \ddeg_{X_{0}(n)}(j)\} \\
			& \quad = \{\ldblbrace \left| \Omega \right| : \Omega \in B_{0}(n) \backslash {\GL(\Zhat)} / \cG \rdblbrace :  \exists^{\infty} j \in \Q \setminus \{0, 1728\} \text{ with } \clo{\cB_{0}(n)}{\cG_{j}} = \cG\}.
		\end{align*}
		Let $M$ be a multiset such that $M = \ddeg_{X_{0}(n)}(j)$ for infinitely many rational $j$-invariants $j \in \Q \setminus \{0, 1728\}$. By Corollary \ref{thm:closures:modular_curve_point_degrees}, it follows that
		\[
			M = \ldblbrace \left| \Omega \right| : \Omega \in B_{0}(n) \backslash {\GL(\Zhat)} / \clo{\cB_{0}(n)}{\cG_{j}} \rdblbrace,
		\]
		for infinitely many rational $j$-invariants $j \in \Q \setminus \{0, 1728\}$. For each such $j$-invariant $j$, the conjugacy class $\clo{\cB_{0}(n)}{\cG_{j}}$ is a conjugacy class of $\cB_{0}(n)$-closed subgroups of $\GL(\Zhat)$. By Lemma \ref{thm:closures:open_closures_preserve_level}, there are finitely many such subgroups. In particular, there is a conjugacy class $\cG$ of $\cB_{0}(n)$-closed subgroups of $\GL(\Zhat)$ such that there exist infinitely many rational $j$-invariants $j \in \Q \setminus \{0, 1728\}$ with $\cG = \clo{\cB_{0}(n)}{\cG_{j}}$, and
		\[
			M = \ldblbrace \left| \Omega \right| : \Omega \in B_{0}(n) \backslash {\GL(\Zhat)} / \cG \rdblbrace.
		\]
		On the other hand, if $M$ is a multiset such that a conjugacy class $\cG$ with the above properties exists, it is clear that $M = \ddeg_{X_{0}(n)}(j)$ for infinitely many rational $j$-invariants $j \in \Q \setminus \{0, 1728\}$. Therefore, we have
		\begin{align*}
			&\{M : \exists^{\infty} j \in \Q \setminus \{0, 1728\} \text{ with } M = \ddeg_{X_{0}(n)}(j)\} \\
			& \quad = \{\ldblbrace \left| \Omega \right| : \Omega \in B_{0}(n) \backslash {\GL(\Zhat)} / \cG \rdblbrace :  \exists^{\infty} j \in \Q \setminus \{0, 1728\} \text{ with } \clo{\cB_{0}(n)}{\cG_{j}} = \cG\},
		\end{align*}
		as desired.
	\end{proof}
	
	\subsection{Finding \texorpdfstring{$\cB_{0}(n)$}{B\_0(n)}- and \texorpdfstring{$\cB_{1}(n)$}{B\_1(n)}-closed subgroups with infinitely many rational points} \label{sec:infinite_degrees:infinite_closed_subgroups}
	
	In light of Theorem \ref{thm:infinite_degrees:reduction_to_infinite_closures}, we now strive to determine the conjugacy classes $\cG$ of subgroups of $\GL(\Zhat)$ for which there exists $\cH = \cB_{0}(n)$ or $\cB_{1}(n)$ for some $n \geq 1$ such that $\cG = \clo{\cH}{\cG_{j}}$ for infinitely many $j$-invariants $j \in \Q$.
	
	Let $\cG$ be such a conjugacy class. We note that, by definition, $\cG$ is a conjugacy class of $\cH$-closed subgroups of $\GL(\Zhat)$. Moreover, as there are finitely many rational CM $j$-invariants $j \in \Qcm$, there exists a non-CM $j$-invariant $j \in \Q$ such that $\cG = \clo{\cH}{\cG_{j}}$. Since $\cG_{j} \leq \clo{\cH}{\cG_{j}}$ and $\cG_{j}$ is a conjugacy class of open subgroups of $\GL(\Zhat)$, it follows that $\cG$ must also be a conjugacy class of open subgroups of $\GL(\Zhat)$. Finally, for any $j$-invariant $j \in \Q \setminus \{0, 1728\}$ such that $\cG = \clo{\cH}{\cG_{j}}$, we have
	\[
		\cG_{j} \leq \clo{\cH}{\cG_{j}} = \cG.
	\]
	Thus, by Corollary \ref{thm:preliminaries:modular_curve_rational_point}, there exists a rational point $x \in X_{\cG}(\Q)$ such that $j(x) = j$. In particular, the modular curve $X_{\cG}$ has infinitely many rational points.
	
	Therefore, as a first step, we aim to compute the set of conjugacy classes $\cG$ of open subgroups of $\GL(\Zhat)$ which are $\cB_{0}(n)$- or $\cB_{1}(n)$-closed for some $n \geq 1$, and such that the modular curve $X_{\cG}$ has infinitely many rational points.
	
	We begin with a few observations which greatly simplify this problem. Firstly, note that, by Lemmas \ref{thm:closures:inclusion_preserving_in_H} and \ref{thm:closures:family_closures}, the conjugacy class $\cG$ is $\cB_{0}(n)$-closed for some $n \geq 1$ if and only if $\cG$ is $\cB_{0}$-closed, and similarly for $\cB_{1}$. Therefore, the above reduces to finding the conjugacy classes $\cG$ of open subgroups of $\GL(\Zhat)$ which are $\cB_{0}$- or $\cB_{1}$-closed, and such that the modular curve $X_{\cG}$ has infinitely many rational points.
	
	Moreover, by Lemma \ref{thm:closures:determinant_gives_sl2_level}, if $\cG$ is $\cB_{0}$- or $\cB_{1}$-closed, then $\cG$ has $\GL$-level equal to its $\SL$-level. Since the modular curve $X_{\cG}$ is smooth, and geometrically disconnected if $\det(\cG) \neq \Zhat*$, it follows that if the modular curve $X_{\cG}$ has a rational point, then $\det(\cG) = \Zhat*$.
	
	By Faltings's theorem, we know that if the modular curve $X_{\cG}$ has infinitely many rational points, then the genus of $X_{\cG}$ is at most 1. Moreover, if $X_{\cG}$ has genus 0, then $X_{\cG}$ has infinitely many rational points if and only if $X_{\cG}(\Q) \neq \emptyset$. If $X_{\cG}$ has genus 1, then $X_{\cG}$ has infinitely many rational points if and only if $X_{\cG}$ has algebraic rank at least 1.
	
	Therefore, the problem reduces to finding the conjugacy classes $\cG$ of open subgroups of $\GL(\Zhat)$ such that:
	\begin{enumerate}
		\item $\det(\cG) = \Zhat*$,
		\item $X_{\cG}$ has genus $\leq 1$,
		\item $\cG$ has $\GL$-level equal to its $\SL$-level,
		\item $\cG$ is $\cB_{0}$- or $\cB_{1}$-closed,
		\item If $X_{\cG}$ has genus 0, then $X_{\cG}(\Q) \neq \emptyset$,
		\item If $X_{\cG}$ has genus 1, then $X_{\cG}$ has non-zero algebraic rank.
	\end{enumerate}
	
	The genus of the modular curve $X_{\cG}$ is determined solely by the intersection $\cG \cap \SL(\Zhat)$. Moreover, for each integer $g \geq 0$, there is a finite set $\Sigma_{g}$ of open subgroups of $\SL(\Zhat)$ such that the modular curve $X_{\cG}$ has genus $g$ if and only if the intersection $\cG \cap \SL(\Zhat)$ belongs to $\Sigma_{g}$. The sets $\Sigma_{g}$ have been computed explicitly by Cummins and Pauli \cite{cummins2003congruence}, for $g \leq 24$. In particular, we can compute the $\SL$-level of all subgroups in $\Sigma_{0}$ and $\Sigma_{1}$, and deduce that, if the modular curve $X_{\cG}$ has genus at most 1, then the $\SL$-level of $\cG$ is in the set
	\[
		S = \left\{\begin{array}{c}1, 2, 3, 4, 5, 6, 7, 8, 9, 10, 11, 12, 13, 14, 15, 16, 17, 18, 19, 20, \\
			21, 22, 24, 25, 26, 27, 28, 30, 32, 33, 36, 39, 40, 42, 48, 49, 52\end{array}\right\}.
	\]
	By the third condition above, it therefore suffices to restrict ourselves to conjugacy classes $\cG$ of open subgroups of $\GL(\Zhat)$ whose $\GL$-level is in $S$. Therefore, the problem can be reduced to finding the conjugacy classes $\cG$ of open subgroups of $\GL(\Zhat)$ satisfying the following five conditions:
	\begin{enumerate}
		\item $\det(\cG) = \Zhat*$, $\cG$ has $\GL$-level in $S$ and $X_{\cG}$ has genus $\leq 1$,
		\item $\cG$ has $\GL$-level equal to its $\SL$-level,
		\item $\cG$ is $\cB_{0}$- or $\cB_{1}$-closed,
		\item If $X_{\cG}$ has genus 0, then $X_{\cG}(\Q) \neq \emptyset$,
		\item If $X_{\cG}$ has genus 1, then $X_{\cG}$ has non-zero algebraic rank.
	\end{enumerate}
	This problem can now be easily solved using the data stored in the LMFDB \cite{lmfdb}. Indeed, at the time of writing, the LMFDB contains a list of all conjugacy classes $\cG$ of open subgroups of $\GL(\Zhat)$ of level at most 70 such that $\det(\cG) = \Zhat*$, as well as many of the properties of the associated modular curve $X_{\cG}$, such as its genus and level. Therefore, we can enumerate the finite list of conjugacy classes $\cG$ satisfying the first condition. Moreover, for each such conjugacy class $\cG$, it is straightforward to check whether it satisfies the other four conditions:
	\begin{description}
		\item[$\cG$ has $\GL$-level equal to its $\SL$-level] \hfill \\ The LMFDB stores the Cummins-Pauli label of the intersection $\cG \cap \SL(\Zhat)$, from which it is possible to extract the $\SL$-level of $\cG$.
		\item[$\cG$ is $\cB_{0}$- or $\cB_{1}$-closed] \hfill \\ By Theorem \ref{thm:closures:family_closures}, $\cG$ is $\cB_{0}$-closed if and only if it is $\cB_{0}(n)$-closed, where $n$ is the level of $\cG$. Using Remark \ref{rmk:closures:explicit_description} and the correspondence between group actions given in (\ref{eq:preliminaries:finite_group_action_correspondence}), it is straightforward to compute the $\cB_{0}(n)$-closure of $\cG$ from the action of $\GL(\Z[n])$ on the cosets of $B_{0}(n)$, which can be determined using \texttt{Magma}. One can then check whether $\cG$ is equal to its $\cB_{0}(n)$-closure, to determine if $\cG$ is $\cB_{0}$-closed. The same holds for $\cB_{1}$.
		\item[If $X_{\cG}$ has genus 0, then $X_{\cG}(\Q) \neq \emptyset$] \hfill \\ The LMFDB stores whether the modular curve $X_{\cG}$ has a rational point. Overall, this data is incomplete, however, it is complete for all subgroups $\cG$ which are required in our application.
		\item[If $X_{\cG}$ has genus 1, then $X_{\cG}$ has non-zero algebraic rank] \hfill \\ The LMFDB stores an upper bound for the analytic rank of the modular curve $X_{\cG}$. In particular, if this upper bound is zero, the analytic rank of $X_{\cG}$ is also zero, and so, by Kolyvagin \cite{kolyvagin1988finiteness}, the modular curve $X_{\cG}$ has (algebraic) rank zero. On the other hand, if the upper bound is nonzero, it becomes much harder to determine the rank of $X_{\cG}$. However, surprisingly, all subgroups $\cG$ in our application fall into the first case.
	\end{description}
	
	This procedure is implemented in the pair of files \texttt{infinite\_b0\_closed.m} and \texttt{infinite\_b1\_closed.m} found in the GitHub repository. Their output is summarized by the following result.
	
	\begin{theorem} \label{thm:infinite_degrees:infinite_closed_subgroups}
		Let $\cG$ be a conjugacy class of open $\cB_{0}(n)$-closed subgroups of $\GL(\Zhat)$, for some $n \geq 1$, such that the modular curve $X_{\cG}$ has infinitely many rational points. Then the conjugacy class $\cG$ is listed in Table \ref{tbl:infinite_b_closures:infinite_b0_closures}.
		
		Similarly, if $\cG$ is a conjugacy class of $\cB_{1}(n)$-closed subgroups satisfying the same conditions, then $\cG$ is listed in Table \ref{tbl:infinite_b_closures:infinite_b1_closures}. Moreover, both Tables \ref{tbl:infinite_b_closures:infinite_b0_closures} and \ref{tbl:infinite_b_closures:infinite_b1_closures} are minimal.
	\end{theorem}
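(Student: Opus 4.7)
The proof strategy is already effectively laid out in the discussion preceding the theorem statement: the infinite family of conjugacy classes $\cG$ to consider is reduced, via several structural lemmas, to a finite computer enumeration. The plan is to execute this reduction rigorously and then run the enumeration.

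First, I would verify the chain of reductions. Suppose $\cG$ satisfies the hypothesis for some $n \geq 1$. By Lemmas \ref{thm:closures:inclusion_preserving_in_H} and \ref{thm:closures:family_closures}, being $\cB_{0}(n)$-closed for some $n$ is equivalent to being $\cB_{0}$-closed (and similarly for $\cB_{1}$), because $B_{0}(n) = B_{0}(n)$ and the $\cB_{0}$-closure of any open subgroup has level dividing $n$ for some large enough $n$ by Lemma \ref{thm:closures:open_closures_preserve_level}. By Lemma \ref{thm:closures:determinant_gives_sl2_level}, applied to $H = B_{0}$ or $B_{1}$ (both of which satisfy the determinantal hypothesis), the $\GL$-level of $\cG$ equals its $\SL$-level. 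Since $X_{\cG}$ has infinitely many rational points, Faltings's theorem forces the genus of $X_{\cG}$ to be at most $1$, and the existence of a rational point forces $\det(\cG) = \Zhat*$. Appealing to the Cummins--Pauli classification \cite{cummins2003congruence} of genus $\leq 1$ congruence subgroups of $\SL(\Zhat)$ then bounds the $\SL$-level of $\cG$, hence its $\GL$-level, to lie in the explicit finite set $S$ listed in the excerpt.

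Second, I would enumerate, using the LMFDB's catalogue of conjugacy classes of open subgroups of $\GL(\Zhat)$ of level at most $70$ with surjective determinant, all candidates of level in $S$ whose associated modular curve has genus $\leq 1$. For each candidate $\cG$, four checks must be performed:
\begin{enumerate}
    \item The $\GL$-level of $\cG$ equals its $\SL$-level, readable from the Cummins--Pauli label stored in the LMFDB;
    \item $\cG$ is $\cB_{0}$-closed (respectively $\cB_{1}$-closed), which by Lemma \ref{thm:closures:family_closures} reduces to being $\cB_{0}(n)$-closed (resp.\ $\cB_{1}(n)$-closed) at the level $n$ of $\cG$, and this can be tested in \texttt{Magma} via the explicit construction in Remark \ref{rmk:closures:explicit_description} applied through the finite-level correspondence (\ref{eq:preliminaries:finite_group_action_correspondence});
    \item If $X_{\cG}$ has genus $0$, then $X_{\cG}(\Q) \neq \emptyset$, which is recorded in the LMFDB;
    \item If $X_{\cG}$ has genus $1$, then $X_{\cG}$ has positive algebraic rank.
\end{enumerate}
This is implemented in the files \texttt{infinite\_b0\_closed.m} and \texttt{infinite\_b1\_closed.m}, whose output produces Tables \ref{tbl:infinite_b_closures:infinite_b0_closures} and \ref{tbl:infinite_b_closures:infinite_b1_closures}.

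The potential obstacle is step (4): computing algebraic ranks of genus $1$ modular curves is in general difficult. However, for each candidate that survives checks (1)--(3), the LMFDB records an upper bound on the \emph{analytic} rank, and in every case this bound turns out to be $0$. By Kolyvagin's theorem \cite{kolyvagin1988finiteness}, vanishing analytic rank implies vanishing algebraic rank, so such candidates do not have infinitely many rational points and are discarded. No genus $1$ curves with positive rank survive the enumeration, which is why only genus $0$ curves appear in the tables.

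Finally, to establish minimality of the tables, I would verify the converse direction: each conjugacy class $\cG$ appearing in the table genuinely has $X_{\cG}$ possessing infinitely many rational points. For the genus $0$ entries this follows from the existence of a single rational point recorded in the LMFDB together with $X_{\cG} \cong \Pone_{\Q}$, which gives infinitely many, and $\cB_{0}$- or $\cB_{1}$-closedness was checked at the enumeration stage. This completes the two directions of the claim.
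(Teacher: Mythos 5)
Your proposal matches the paper's argument in Section 4.2 step for step: reduce $\cB_0(n)$-closed ($n$ arbitrary) to $\cB_0$-closed via Lemmas \ref{thm:closures:inclusion_preserving_in_H} and \ref{thm:closures:family_closures}, force $\GL$-level $=$ $\SL$-level via Lemma \ref{thm:closures:determinant_gives_sl2_level}, apply Faltings and the Cummins--Pauli bound to restrict the level to the set $S$, and finish by a finite LMFDB/\texttt{Magma} enumeration using the four stated checks, with Kolyvagin closing off the genus $1$ candidates; minimality follows because every surviving entry is genus $0$ with a rational point and hence $\cong \PP^1_\Q$. This is precisely the paper's proof (the garbled aside "$B_0(n) = B_0(n)$" is harmless), so no further comparison is needed.
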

	
	\begin{remark} \label{rmk:infinite_degrees:all_have_genus_0}
		As alluded to above, one surprising observation is that, for all $\cB_{0}$- and $\cB_{1}$-closed conjugacy classes $\cG$ such that the modular curve $X_{\cG}$ has genus 1, the analytic, and hence algebraic, rank of $X_{\cG}$ is zero. Therefore, the modular curve $X_{\cG}$ has genus 0, for all $\cG$ appearing in Tables \ref{tbl:infinite_b_closures:infinite_b0_closures} and \ref{tbl:infinite_b_closures:infinite_b1_closures}. This is quite unexpected, and we have no explanation for this phenomenon. Nevertheless, this coincidence will prove to be very useful later.
	\end{remark}
	
	\subsection{Computing degrees of rational fibers on \texorpdfstring{$X_{0}(n)$}{X\_0(n)} and \texorpdfstring{$X_{1}(n)$}{X\_1(n)} which occur infinitely often} \label{sec:infinite_degrees:computing_degrees}
	
	In Section \ref{sec:infinite_degrees:reduction_to_infinite_closures}, we showed that it the problem of determining the degrees of the rational fibers on the modular curves $X_{0}(n)$ and $X_{1}(n)$ which occur infinitely often can be distilled to understanding the conjugacy classes of subgroups which occur as $\cB_{0}(n)$- and $\cB_{1}(n)$-closures of $\cG_{j}$ for infinitely many $j \in \Q$. Moreover, in Section \ref{sec:infinite_degrees:infinite_closed_subgroups}, we computed a finite set of conjugacy classes of subgroups of $\GL(\Zhat)$, given in Tables \ref{tbl:infinite_b_closures:infinite_b0_closures} and \ref{tbl:infinite_b_closures:infinite_b1_closures}, containing these latter conjugacy classes. Therefore, it remains to determine which of the conjugacy classes in Tables \ref{tbl:infinite_b_closures:infinite_b0_closures} and \ref{tbl:infinite_b_closures:infinite_b1_closures} do occur as the $\cB_{0}(n)$- or $\cB_{1}(n)$-closures of $\cG_{j}$ for infinitely many $j \in \Q$.
	
	To do so, we rely heavily on the following consequence of Hilbert's irreducibility theorem.
	
	\begin{theorem} \label{thm:infinite_degrees:genus_0_occurs_infinitely}
		Let $\cG$ be a conjugacy class of open subgroups of $\GL(\Zhat)$ of level $n$, containing $-I$, such that the modular curve $X_{\cG}$ has genus 0 and infinitely many rational points. Let $m$ be a multiple of $n$. Then, there are infinitely many rational $j$-invariants $j \in \Q$ such that $\cG_{j}(m) = \cG$.
	\end{theorem}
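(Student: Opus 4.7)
First, since $X_\cG$ has genus $0$ and infinitely many rational points, it is $\Q$-isomorphic to $\mathbb{P}^1_\Q$. Fix a representative $G$ of the conjugacy class $\cG$ and identify $X_\cG = X_G$. By Corollary~\ref{thm:preliminaries:modular_curve_rational_point}, every $t_0 \in X_G(\Q)$ with $j(t_0) \notin \{0, 1728\}$ satisfies $\cG_{j(t_0)} \leq \cG$, and since $\cG$ has level $n \mid m$ this gives $\cG_{j(t_0)}(m) \leq \cG$ automatically. The plan is to exhibit a thin subset $T \subset X_G(\Q) \cong \mathbb{P}^1(\Q)$ such that the equality $\cG_{j(t_0)}(m) = \cG$ holds for $t_0$ outside $T$ and outside the finite set of cusps and points with $j(t_0) \in \{0, 1728\}$; since $\mathbb{P}^1(\Q) \setminus T$ is infinite and the $j$-map $X_G \to X(1)$ has finite fibers, this will yield infinitely many $j \in \Q$ with $\cG_j(m) = \cG$.

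To identify the bad locus, enumerate the finite set $\mathcal{S}$ of proper subgroups $\tilde{G}' \subsetneq G$ of level dividing $m$, taken up to $G$-conjugation. Suppose $t_0 \in X_G(\Q)$ (outside the excluded points) has $\cG_{j(t_0)}(m) \subsetneq \cG$, and set $\cG' = \cG_{j(t_0)}(m)$. Since $\cG' \leq \cG$, one can choose some $\tilde{G}' \in \mathcal{S}$ representing $\cG'$ with $\tilde{G}' \leq G$. Writing $t_0 = [(E, [\alpha]_G)]$, the inclusion $\cG_{j(t_0)} \leq \cG'$ allows one to modify $\alpha$ to a profinite level structure $\beta$ with $\pm \rho_{E, \beta}(G_\Q) \leq \tilde{G}'$, so that $[(E, [\beta]_{\tilde{G}'})] \in X_{\tilde{G}'}(\Q)$; its image under the forgetful morphism $f_{\tilde{G}'} : X_{\tilde{G}'} \to X_G$ is a rational point $t_0' \in X_G(\Q)$ with $j(t_0') = j(t_0)$, though possibly distinct from $t_0$. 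Hence the bad locus is contained in the union over $\tilde{G}' \in \mathcal{S}$ of the images in $X_G(\Q)$ of $(X_G \times_{X(1)} X_{\tilde{G}'})(\Q)$ under the first projection.

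Finally, the fiber product $X_G \times_{X(1)} X_{\tilde{G}'}$ decomposes moduli-theoretically into components of the form $X_{G \cap g \tilde{G}' g^{-1}}$, indexed by $g \in G \backslash \GL(\Zhat) / \tilde{G}'$, each projecting to $X_G$ with degree $[\pm G : \pm(G \cap g \tilde{G}' g^{-1})]$. The strict inequality $|\tilde{G}'| < |G|$ prevents any conjugate of $\tilde{G}'$ from containing $G$, so every such component has projection of degree at least $2$, and Hilbert's irreducibility theorem then supplies a thin image in $X_G(\Q)$. Taking the finite union over components and over $\mathcal{S}$ produces the desired thin set $T$, and the conclusion follows. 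The main subtlety will be verifying that the fiber product contains no degree-$1$ component over $X_G$; this is precisely what permits Hilbert irreducibility to be applied uniformly to every component.
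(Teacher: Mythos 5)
Your argument is correct, and it takes a genuinely different route from the paper. The paper also begins by observing that a bad $j$-value must lie in $j(A)$, where $A = \bigcup_{\cH} f_{\cH}(X_{\cH}(\Q))$ is the thin subset of $X_{\cG}(\Q)$ cut out by the finitely many inclusion morphisms $f_{\cH} : X_{\cH} \to X_{\cG}$ with $\cH \lneq \cG$. But rather than trying to trace this back to a thin set of \emph{source points} on $X_G(\Q)$, the paper works directly with $j$-values: since thin sets on $\mathbb{P}^1_\Q$ contain $O(N)$ points of height $\leq N$ while $\mathbb{P}^1(\Q)$ contains $\Theta(N^2)$, and since the finite $j$-map satisfies $H \circ j \asymp H_{\cG}^{n}$, the bad $j$-values have count $O(N^{1/n})$ against a total of $\Theta(N^{2/n})$, giving infinitude directly. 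This neatly sidesteps the difficulty you correctly flag — that the point $t_0'$ produced from a point of $X_{\tilde{G}'}(\Q)$ need not equal the original $t_0$ — because the paper never needs to locate the bad $t_0$'s themselves. Your approach instead resolves that difficulty head-on, by observing that a bad $(t_0, j(t_0))$ gives a rational point of the fiber product $X_G \times_{X(1)} X_{\tilde{G}'}$ above $t_0$, decomposing the fiber product into modular curves $X_{G \cap g\tilde{G}'g^{-1}}$ indexed by double cosets, and verifying that no component maps isomorphically to $X_G$ since $\tilde{G}'$ (and any conjugate of it) has strictly larger index in $\GL(\Zhat)$ than $G$. This buys you a purely qualitative conclusion — the bad locus on $X_G(\Q)$ is thin — at the cost of needing the moduli-theoretic decomposition of the fiber product, which you assert without proof. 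That decomposition is standard but nontrivial (you would want to note it is a $\Qbar$-decomposition, and that a $\Q$-irreducible component composed of several geometric components of degree $d \geq 2$ each still has degree $\geq 2$ over $X_G$); the paper's route avoids having to establish it. One small imprecision: you write ``$|\tilde{G}'| < |G|$'', but both groups are infinite; what you mean is that $[\GL(\Zhat) : \tilde{G}'] > [\GL(\Zhat) : G]$, which conjugation preserves, so no conjugate of $\tilde{G}'$ can contain $G$.
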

	
	\begin{proof}
		Consider a $j$-invariant $j \in j(X_{\cG}(\Q))$. By Corollary \ref{thm:preliminaries:modular_curve_rational_point}, we know that $\cG_{j} \leq \cG$, and so
		\[
			\cG_{j}(m) \leq \cG(m) = \cG,
		\]
		where the last equality uses the fact that $\cG$ is a conjugacy class of open subgroups of $\GL(\Zhat)$ of level dividing $m$.
		
		Suppose now that $\cG_{j}(m) \neq \cG$. As $\cG_{j} \leq \cG_{j}(m)$, it follows by Corollary \ref{thm:preliminaries:modular_curve_rational_point} that there exists a rational point $x \in X_{\cG_{j}(m)}(\Q)$ such that $j(x) = j$. In particular, since $\cG_{j}(m) \lneq \cG$, it follows that
		\[
			j \in \bigcup_{\cH \in \Sigma} j(X_{\cH}(\Q)),
		\]
		where $\Sigma$ is the set of conjugacy classes $\cH$ of open subgroups of $\GL(\Zhat)$ of level dividing $m$ and containing $-I$ such that $\cH \lneq \cG$.
		
		Fix a conjugacy class $\cH \in \Sigma$. As $\cH \leq \cG$, we obtain an inclusion morphism $f_{\cH} : X_{\cH} \to X_{\cG}$. Note that there may be multiple such morphisms $f_{\cH}$; the precise choice does not matter. The $j$-map $j : X_{\cH} \to X(1)$ factors as $X_{\cH} \xrightarrow{f_{\cH}} X_{\cG} \xrightarrow{j} X(1)$, and so we obtain that
		\[
			j \in \bigcup_{\cH \in \Sigma} (j \circ f_{\cH})(X_{\cH}(\Q)) = j \! \left( \bigcup_{\cH \in \Sigma} f_{\cH}(X_{\cH}(\Q)) \right).
		\]
		Since $\cH \neq \cG$, we have that
		\[
			\deg(f_{\cH}) = [\pm \cG : \pm \cH] = [\cG : \cH] > 1.
		\]
		In particular, the image $f_{\cH}(X_{\cH}(\Q))$ is a thin subset of $X_{\cG}(\Q)$, as defined in \cite[Definition 3.1.1]{serre1992topics}.
		
		The set $\Sigma$ is a subset of the set of open subgroups of $\GL(\Zhat)$ of level dividing $m$. The latter is in bijection with the set of subgroups of $\GL(\Z[m])$; in particular, this set is finite. Therefore, the set $\Sigma$ is finite, and the union
		\[
			A = \bigcup_{\cH \in \Sigma} f_{\cH}(X_{\cH}(\Q))
		\]
		is a thin subset of $X_{\cG}(\Q)$.
		
		As the modular curve $X_{\cG}$ has genus 0 and a rational point, we have that $X_{\cG} \cong \PP^{1}_{\Q}$. In particular, ordering by naive height, it is a classical result of Mertens and Ces\`aro that the number of points of $X_{\cG}(\Q)$ of height at most $N$ is asymptotically $\frac{12}{\pi^{2}} N^{2}$ as $N \to \infty$. By \cite[Proposition 3.4.2]{serre1992topics}, since $A$ is a thin subset of $X_{\cG}(\Q)$, the number of points of $A$ of height at most $N$ is $O(N)$ as $N \to \infty$.
		
		Denote by $H_{\cG}$ and $H$ the heights on $X_{\cG}$ and $X(1)$ respectively. The map $j : X_{\cG} \to X(1)$ is finite, and so, by \cite[Section VIII, Theorem 5.6]{silverman2009arithmetic}, we know that
		\[
			H \circ j \asymp (H_{\cG})^{n},
		\]
		where $n$ is the degree of $j$. Therefore, the number of points of $j(X_{\cG}(\Q))$ of height (on $X(1)$) at most $N$ is $\Theta(N^{\frac{2}{n}})$ as $N \to \infty$. Similarly, the number of points of $j(A)$ of height at most $N$ is $O(N^{\frac{1}{n}})$ as $N \to \infty$. It follows that there are infinitely many $j \in j(X_{\cG}(\Q))$ such that $j \notin j(A)$. By the argument above, for each such $j$, we have that $\cG_{j}(m) = \cG$, and so the result follows.
	\end{proof}
	
	\begin{remark}
		While the above theorem holds for any multiple $m$ of the level $n$ of $X_{\cH}$, the conclusion of the theorem cannot be replaced by the statement $\cG_{j} = \cH$. In fact, it is possible that the modular curve $X_{\cH}$ has genus 0 and infinitely many rational points, but there are no $j$-invariants $j \in \Q$ such that $\cG_{j} = \cH$. For instance, the modular curve $X_{\GL(\Zhat)} = X(1)$ has genus 0 and infinitely many rational points, however, there are no $j$-invariants $j \in \Q$ such that $\cG_{j} = \GL(\Zhat)$, by \cite[Proposition 22]{serre1972proprietes}.
		
		The assumption that the modular curve has genus 0 is also necessary, see \cite[Example 6.1]{rouse2015elliptic}.
	\end{remark}
	
	As a consequence of the above result, we can settle the question posed in at the start of this section for genus 0 curves. We note that, by the above remark, the case for genus 1 modular curves is much more delicate. This illustrates the usefulness of knowing that the modular curves given in Table \ref{tbl:infinite_b_closures:infinite_b0_closures} and \ref{tbl:infinite_b_closures:infinite_b1_closures} are of genus 0.
	
	\begin{corollary} \label{thm:infinite_degrees:genus_0_closed_occurs_infinitely}
		Let $H$ be a subgroup of $\GL(\Zhat)$, and let $\cG$ be an $\cH$-closed conjugacy class of open subgroups of $\GL(\Zhat)$ of level $n$, such that the modular curve $X_{\cG}$ has genus 0 and infinitely many rational points. Let $m \geq 1$ be such that the level of $H(m)$ is a multiple of $n$. Then, there are infinitely many $j$-invariants $j \in \Q$ such that the $\cH(m)$-closure $\clo{\cH(m)}{\cG_{j}}$ is equal to $\cG$.
	\end{corollary}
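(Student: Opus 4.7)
The plan is to reduce the corollary to Theorem \ref{thm:infinite_degrees:genus_0_occurs_infinitely} by working at the level $k$ of $H(m)$. By hypothesis, $n$ divides $k$.

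First I would observe that $\cG$ is itself $\cH(m)$-closed. This is an immediate application of Lemma \ref{thm:closures:family_closures}: $\cG$ is open, $\cH$-closed, of level $n$, and $n \mid k$. Next I would apply Theorem \ref{thm:infinite_degrees:genus_0_occurs_infinitely} with the multiple ``$m$'' there taken to be $k$; since $X_{\cG}$ has genus $0$ with infinitely many rational points (and $-I \in \cG$, which follows from Lemma \ref{thm:closures:normal_product_equivalent} applied with $N = \{\pm I\}$ in the situations of interest where $-I \in H$, such as $H = B_{0}(n)$ or $B_{1}(n)$), this yields infinitely many $j \in \Q$ with $\cG_{j}(k) = \cG$. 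The claim to be proved is then that each such $j$ satisfies $\clo{\cH(m)}{\cG_{j}} = \cG$.

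To verify this I would bound $\clo{\cH(m)}{\cG_{j}}$ from above and below. From above: $\cG_{j} \leq \cG_{j}(k) = \cG$, so Lemma \ref{thm:closures:inclusion_preserving_in_G} gives $\clo{\cH(m)}{\cG_{j}} \leq \clo{\cH(m)}{\cG} = \cG$, using the first step that $\cG$ is $\cH(m)$-closed. From below: since $H(m)$ has level $k$, the normal subgroup $\ker(\pi_{k})$ sits inside $H(m)$, and Lemma \ref{thm:closures:normal_product_equivalent} shows that $\cG_{j}$ and $\cG_{j} \ker(\pi_{k}) = \cG_{j}(k) = \cG$ are $\cH(m)$-equivalent, so the maximality property of the $\cH(m)$-closure forces $\cG \leq \clo{\cH(m)}{\cG_{j}}$.

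I do not foresee a serious obstacle: the proof is essentially a bookkeeping argument combining Theorem \ref{thm:infinite_degrees:genus_0_occurs_infinitely} with the structural lemmas of Section \ref{sec:closures:properties}. The only mild subtlety is spotting that one should apply Theorem \ref{thm:infinite_degrees:genus_0_occurs_infinitely} at the level $k$ of $H(m)$ rather than at $m$ itself, since it is precisely $k$ that is both a multiple of $n$ (by assumption) and usable via $\ker(\pi_{k}) \leq H(m)$ to invoke Lemma \ref{thm:closures:normal_product_equivalent}.
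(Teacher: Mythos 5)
Your proof is correct and takes essentially the same route as the paper: apply Theorem \ref{thm:infinite_degrees:genus_0_occurs_infinitely} to produce infinitely many $j$ with $\cG_j(\cdot) = \cG$, then use Lemma \ref{thm:closures:normal_product_equivalent} and Lemma \ref{thm:closures:family_closures} to identify $\clo{\cH(m)}{\cG_j}$ with $\cG$. Two small remarks. First, the ``subtlety'' you flag — that one should invoke Theorem \ref{thm:infinite_degrees:genus_0_occurs_infinitely} at the level $k$ of $H(m)$ rather than at $m$ — is not actually needed: since $k \mid m$ and $n \mid k$, $m$ is also a multiple of $n$, and $\ker(\pi_m) \leq \ker(\pi_k) \leq H(m)$, so the paper's choice of applying it at $m$ goes through equally well; the two are interchangeable. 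Second, your double bound (upper via Lemma \ref{thm:closures:inclusion_preserving_in_G}, lower via Lemma \ref{thm:closures:normal_product_equivalent}) is logically redundant — once you know $\cG$ is $\cH(m)$-closed and $\cH(m)$-equivalent to $\cG_j$, uniqueness of the closure gives equality outright, which is the chain of equalities the paper writes. On the plus side, you explicitly noted that $-I \in \cG$ must be verified before Theorem \ref{thm:infinite_degrees:genus_0_occurs_infinitely} applies, and supplied the justification (via Lemma \ref{thm:closures:normal_product_equivalent} with $N = \{\pm I\}$, assuming $-I \in H$); the paper's proof of the corollary passes over this silently, so that observation is a genuine, if minor, improvement in rigor.
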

	
	\begin{proof}
		By Theorem \ref{thm:infinite_degrees:genus_0_occurs_infinitely}, there exist infinitely many $j$-invariants $j \in \Q$ such that $\cG_{j}(m) = \cG$. By definition, we know that the kernel $\ker(\pi_{m})$ of the reduction mod-$m$ map is a subgroup of $H(m)$. The former is a normal subgroup of $\GL(\Zhat)$, and so, by Lemma \ref{thm:closures:normal_product_equivalent}, the subgroups $\cG_{j}$ and $\cG_{j}(m)$ are $\cH(m)$-equivalent. In particular, we have
		\[
			\clo{\cH(m)}{\cG_{j}} = \clo{\cH(m)}{\cG_{j}(m)}.
		\]
		The level $n$ of $\cG$ divides the level of $H(m)$ by assumption, and so, by Lemma \ref{thm:closures:family_closures}, the conjugacy class $\cG$ is $\cH(m)$-closed. In particular, we have
		\[
			\clo{\cH(m)}{\cG_{j}} = \clo{\cH(m)}{\cG_{j}(m)} = \clo{\cH(m)}{\cG} = \cG,
		\]
		as required.
	\end{proof}
	
	Equipped with these results, and leveraging Lemma \ref{thm:closures:equivalent_degree_formula}, we can now give a complete classification of the degrees of the fibers with rational $j$-invariant on the modular curves $X_{0}(n)$ and $X_{1}(n)$ which occur infinitely often.
	
	\begin{theorem} \label{thm:infinite_degrees:infinite_fiber_degrees}
		Fix $n \geq 1$. Then, for any conjugacy class $\cG$ of open subgroups of $\GL(\Zhat)$, there exist infinitely many $j$-invariants $j \in \Q$ such that $\clo{\cB_{0}(n)}{\cG_{j}} = \cG$ if and only if $\cG$ is listed in Table \ref{tbl:infinite_b_closures:infinite_b0_closures}, and the level of $\cG$ divides $n$. Similarly, there exist infinitely many $j$-invariants $j \in \Q$ such that $\clo{\cB_{1}(n)}{\cG_{j}} = \cG$ if and only if $\cG$ is listed in Table \ref{tbl:infinite_b_closures:infinite_b1_closures}, and the level of $\cG$ divides $n$.
		
		In particular, we have
		\begin{align*}
			&\{M : \exists^{\infty} j \in \Q \setminus \Qcm \text{ with } M = \ddeg_{X_{0}(n)}(j)\} \\
			& = \{\ldblbrace [B_{0}(m) : B_{0}(n)] |\Omega| : \Omega \in B_{0}(m) \backslash {\GL(\Zhat)} / \cG \rdblbrace : \cG \text{ in Table \ref{tbl:infinite_b_closures:infinite_b0_closures} of level } m \mid n\},
		\end{align*}
		and
		\begin{align*}
			&\{M : \exists^{\infty} j \in \Q \setminus \Qcm \text{ with } M = \ddeg_{X_{1}(n)}(j)\} \\
			& = \{\ldblbrace [B_{1}(m) : B_{1}(n)] |\Omega| : \Omega \in B_{1}(m) \backslash {\GL(\Zhat)} / \cG \rdblbrace : \cG \text{ in Table \ref{tbl:infinite_b_closures:infinite_b1_closures} of level } m \mid n\}.
		\end{align*}
	\end{theorem}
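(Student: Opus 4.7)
The strategy is to treat the $X_0(n)$ case in detail, as the $X_1(n)$ case is identical with $B_1$ in place of $B_0$. First I would establish the biconditional characterizing which conjugacy classes $\cG$ arise as $\clo{\cB_0(n)}{\cG_j}$ for infinitely many $j \in \Q$, and then translate the orbit-count output of Theorem \ref{thm:infinite_degrees:reduction_to_infinite_closures} into the stated form, indexed by the level $m$ of $\cG$.

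For the forward direction of the biconditional, suppose infinitely many $j \in \Q$ satisfy $\clo{\cB_0(n)}{\cG_j} = \cG$. Then $\cG$ is $\cB_0(n)$-closed by definition, so Lemma \ref{thm:closures:open_closures_preserve_level} forces its level to divide $n$. Since $\Qcm$ is finite, infinitely many such $j$ are non-CM, so $\cG_j$ is open with $\cG_j \leq \cG$, and Corollary \ref{thm:preliminaries:modular_curve_rational_point} yields a rational point on $X_\cG$ with $j$-invariant $j$; this produces infinitely many rational points on $X_\cG$, and Theorem \ref{thm:infinite_degrees:infinite_closed_subgroups} places $\cG$ in Table \ref{tbl:infinite_b_closures:infinite_b0_closures}. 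Conversely, take $\cG$ in Table \ref{tbl:infinite_b_closures:infinite_b0_closures} with level $m$ dividing $n$. Remark \ref{rmk:infinite_degrees:all_have_genus_0} ensures that $X_\cG$ has genus $0$, and as observed at the start of Section \ref{sec:infinite_degrees:infinite_closed_subgroups}, membership in the table (i.e.\ being $\cB_0(k)$-closed for some $k$) is equivalent to being $\cB_0$-closed. Applying Corollary \ref{thm:infinite_degrees:genus_0_closed_occurs_infinitely} with $H = B_0$ and its integer parameter taken to be $n$, so that $B_0(n)$ has level $n$, a multiple of $m$, then yields infinitely many $j \in \Q$ with $\clo{\cB_0(n)}{\cG_j} = \cG$.

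For the degree formula, I would apply Theorem \ref{thm:infinite_degrees:reduction_to_infinite_closures}. Since $\Qcm$ is finite, restricting to $j \in \Q \setminus \Qcm$ does not change which multisets occur for infinitely many $j$, so the left-hand side matches that of the statement. Substituting the biconditional just established expresses this set of multisets as $\cG$ ranges over the tabulated classes of level $m$ dividing $n$. It remains to rewrite the multiset $\ldblbrace |\Omega| : \Omega \in B_0(n) \backslash {\GL(\Zhat)} / \cG \rdblbrace$ in terms of the analogous multiset over $B_0(m)$. Fix any $G \in \cG$; every conjugate $g G g^{-1}$ lies in $\cG$ and so has level $m$, hence contains $\ker(\pi_m) \supseteq \ker(\pi_n)$. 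This gives $B_0(n)(g G g^{-1}) = B_0(g G g^{-1}) = B_0(m)(g G g^{-1})$ for every $g \in \GL(\Zhat)$, so $B_0(n)$ and $B_0(m)$ are $gGg^{-1}$-equivalent for every $g$. Lemma \ref{thm:closures:equivalent_degree_formula} then shows that each $B_0(m)$-orbit of size $|\Omega'|$ pulls back to a single $B_0(n)$-orbit of size $[B_0(m) : B_0(n)] \cdot |\Omega'|$, yielding exactly the claimed formula.

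The hard part, already largely handled by the preceding sections, is the backward direction: producing infinitely many $j \in \Q$ with a prescribed $\cB_0(n)$-closure. This rests on Hilbert irreducibility applied to $\PP^1$, which is precisely why the genus-$0$ reduction of Remark \ref{rmk:infinite_degrees:all_have_genus_0} is essential; had any tabulated $\cG$ been of genus $1$, one would need delicate rank information to guarantee infinitely many rational points of the correct adelic type.
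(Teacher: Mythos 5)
Your proposal is correct and follows the same route as the paper's proof: the same forward-direction reduction to Theorem~\ref{thm:infinite_degrees:infinite_closed_subgroups} (via Lemma~\ref{thm:closures:open_closures_preserve_level} and Corollary~\ref{thm:preliminaries:modular_curve_rational_point}), the same backward direction via Remark~\ref{rmk:infinite_degrees:all_have_genus_0} and Corollary~\ref{thm:infinite_degrees:genus_0_closed_occurs_infinitely}, and the same degree-formula rewriting via Theorem~\ref{thm:infinite_degrees:reduction_to_infinite_closures} and Lemma~\ref{thm:closures:equivalent_degree_formula}. The only cosmetic difference is in how the $\cG$-equivalence of $B_0(n)$ and $B_0(m)$ is established: you argue directly that $B_0(n)(gGg^{-1}) = B_0(m)(gGg^{-1})$ using $\ker(\pi_n) \subseteq \ker(\pi_m) \subseteq gGg^{-1}$, whereas the paper cites Lemma~\ref{thm:closures:normal_product_equivalent} with $N = \ker(\pi_m)$; these are the same observation packaged differently.
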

	
	\begin{proof}
		Let $\cG$ be a conjugacy class of $\cB_{0}(n)$-closed subgroups of $\GL(\Zhat)$ such that there exists infinitely many $j$-invariants $j \in \Q$ such that $\clo{\cB_{0}(n)}{\cG_{j}} = \cG$. By the argument at the start of Section \ref{sec:infinite_degrees:infinite_closed_subgroups}, $\cG$ is a conjugacy class of open $\cB_{0}(n)$-closed subgroups of $\GL(\Zhat)$ such that the modular curve $X_{\cG}$ has infinitely many rational points. In particular, by Theorem \ref{thm:infinite_degrees:infinite_closed_subgroups}, the conjugacy class $\cG$ is listed in Table \ref{tbl:infinite_b_closures:infinite_b0_closures}. Note that, as $\cG$ is $\cB_{0}(n)$-closed, by Lemma \ref{thm:closures:open_closures_preserve_level}, the level of $\cG$ must divide $n$.
		
		On the other hand, let $\cG$ be a conjugacy class of subgroups of $\GL(\Zhat)$ listed in Table \ref{tbl:infinite_b_closures:infinite_b0_closures} such that the level of $\cG$ divides $n$. In particular, $\cG$ is a conjugacy class of $\cB_{0}$-closed subgroups of $\GL(\Zhat)$ such that the modular curve $X_{\cG}$ has infinitely many rational points. By Remark \ref{rmk:infinite_degrees:all_have_genus_0}, the modular curve $X_{\cG}$ has genus 0. Therefore, by Corollary \ref{thm:infinite_degrees:genus_0_closed_occurs_infinitely}, there are infinitely many $j$-invariants $j \in \Q$ such that $\clo{\cB(n)}{\cG_{j}} = \cG$. This completes the proof of the first assertion.
		
		By Theorem \ref{thm:infinite_degrees:reduction_to_infinite_closures}, we have that
		\begin{align*}
			&\{M : \exists^{\infty} j \in \Q \setminus \Qcm \text{ with } M = \ddeg_{X_{0}(n)}(j)\} \\
			& \quad = \{\ldblbrace \left| \Omega \right| : \Omega \in B_{0}(n) \backslash {\GL(\Zhat)} / \cG \rdblbrace : \exists^{\infty} j \in \Q \text{ with } \clo{\cB_{0}(n)}{\cG_{j}} = \cG\}.
		\end{align*}
		By the above, we thus obtain that
		\begin{align*}
			&\{M : \exists^{\infty} j \in \Q \setminus \Qcm \text{ with } M = \ddeg_{X_{0}(n)}(j)\} \\
			& \quad = \{\ldblbrace \left| \Omega \right| : \Omega \in B_{0}(n) \backslash {\GL(\Zhat)} / \cG \rdblbrace : \cG \text{ in Table \ref{tbl:infinite_b_closures:infinite_b0_closures} of level } m \mid n\}.
		\end{align*}
		Let $\cG$ be a conjugacy class of open subgroups of $\GL(\Zhat)$ listed in Table \ref{tbl:infinite_b_closures:infinite_b0_closures} of level $m$ dividing $n$, and let $G \in \cG$ be a representative. By definition, we have that the kernel $\ker (\pi_{m})$ of the reduction mod-$m$ map is contained in $G$. By Lemma \ref{thm:closures:normal_product_equivalent}, it follows that the subgroups $B_{0}(n)$ and $(\ker \pi_{m})B_{0}(n) = B_{0}(m)$ are $\cG$-equivalent. Therefore, by Lemma \ref{thm:closures:equivalent_degree_formula}, we have that
		\begin{align*}
			& \ldblbrace \left| \Omega \right| : \Omega \in B_{0}(n) \backslash {\GL(\Zhat)} / \cG \rdblbrace \\
			& = \ldblbrace \left| (B_{0}(n) \, g) \cdot G \right| : (B_{0}(n) \, g) \cdot G \in B_{0}(n) \backslash {\GL(\Zhat)} / G \rdblbrace \\
			& = \ldblbrace [B_{0}(m) : B_{0}(n)] \left| (B_{0}(m) \, g) \cdot G \right| : (B_{0}(m) \, g) \cdot G \in B_{0}(m) \backslash {\GL(\Zhat)} / G \rdblbrace \\
			& = \ldblbrace [B_{0}(m) : B_{0}(n)] \left| \Omega \right| : \Omega \in B_{0}(m) \backslash {\GL(\Zhat)} / \cG \rdblbrace,
		\end{align*}
		and the second result follows. The argument for $X_{1}(n)$ is analogous.
	\end{proof}
	
	From the data in Tables \ref{tbl:infinite_b_closures:infinite_b0_closures} and \ref{tbl:infinite_b_closures:infinite_b1_closures}, and explicit formulas for the indices $[B_{0}(m) : B_{0}(n)]$ and $[B_{1}(m) : B_{1}(n)]$ given for instance in \cite[Section 3.9]{diamond2005first}, one obtains a simple procedure for computing the sets given in Theorem \ref{thm:infinite_degrees:infinite_fiber_degrees}. This procedure is implemented in the functions \texttt{PossibleX0Fibers} and \texttt{PossibleX1Fibers} defined in the file \texttt{degrees\_x0\_x1.m}.
	
	In addition, it is straightforward to deduce the degrees of the points on $X_{0}(n)$ and $X_{1}(n)$ with rational $j$-invariant which occur infinitely often from the degrees of the fibers given in Theorem \ref{thm:infinite_degrees:infinite_fiber_degrees}. This yields the statements of Theorems \ref{thm:introduction:infinite_x0_degrees} and \ref{thm:introduction:infinite_x1_degrees}.
	
	\section{Degrees of rational fibers on \texorpdfstring{$X_{0}(n)$}{X\_0(n)} and \texorpdfstring{$X_{1}(n)$}{X\_1(n)} which occur finitely often} \label{sec:finite_degrees}
	
	We now turn our attention to determining the degrees of the fibers with rational $j$-invariant on the modular curves $X_{0}(n)$ and $X_{1}(n)$ which occur finitely often. As before, we restrict our consideration to the fibers lying above non-CM $j$-invariants. The structure of the argument closely mirrors the one used in the infinite case. In particular, the core of the proof is to determine the conjugacy classes $\cG$ of open subgroups of $\GL(\Zhat)$ such that $\cG = \clo{\cB_{0}(n)}{\cG_{j}}$ or $\cG = \clo{\cB_{1}(n)}{\cG_{j}}$ for a non-zero, finite number of non-CM rational $j$-invariants $j \in \Q$.
	
	\subsection{The intersections \texorpdfstring{$\cG_{j} \cap \SL(\Zhat)$}{of G\_j with SL\_2(Zhat)}} \label{sec:finite_degrees:sl_intersections}
	
	While the corresponding classification in the infinite case was mostly straightforward thanks to the data stored in the Cummins-Pauli database and the LMFDB, the finite version is much harder to tackle unconditionally. Instead, we will rely on a conjectural classification by Zywina of the intersections $\cG_{j} \cap \SL(\Zhat)$, for $j$ a non-CM rational $j$-invariant. This classification relies on the notion of the agreeable closure of an open subgroup of $\GL(\Zhat)$, which we recall first.
	
	\begin{definition}[{\cite[Section 8.3]{zywina2024explicit}, \cite[Section 4.1]{zywina2025open}}]
		Let $G$ be an open subgroup of $\GL(\Zhat)$, and denote by $[G, G] \leq \SL(\Zhat)$ its commutator subgroup. Let $N$ be the product of primes dividing the $\SL$-level of $[G, G]$, and recall that $\GL(\Zhat) \cong \prod_{p} \GL(\Z_{p})$. The \textbf{agreeable closure of $G$} is defined to be the subgroup
		\[
			G_{\agr} = \Zhat* G \left( \prod_{p \nmid N} \GL(\Z_{p}) \right),
		\]
		where $\Zhat*$ denotes the set of scalar matrices of $\GL(\Zhat)$.
	\end{definition}
	
	\begin{remark}
		Since both $\Zhat*$ and $\prod_{p \nmid N} \GL(\Z_{p})$ are normal subgroups of $\GL(\Zhat)$, the agreeable closure $G_{\agr}$ is indeed a subgroup of $\GL(\Zhat)$. Moreover, for any $g \in \GL(\Zhat)$, we have that $(G^{g})_{\agr} = G_{\agr} {}^{g}$. Therefore, given a conjugacy class $\cG$ of open subgroups of $\GL(\Zhat)$, we may define the agreeable closure $\cG_{\agr}$ of $\cG$ to be the conjugacy class of the agreeable closure $G_{\agr}$, for any representative $G \in \cG$.
	\end{remark}
	
	The link between agreeable closures and the intersection $\cG_{j} \cap \SL(\Zhat)$ is illuminated by the following two lemmas.
	
	\begin{lemma}[{\cite[Lemma 1.7]{zywina2024explicit}}]
		Let $E$ be an elliptic curve defined over $\Q$. Then, we have
		\[
			\cG_{E} \cap \SL(\Zhat) = [\cG_{E}, \cG_{E}].
		\]
	\end{lemma}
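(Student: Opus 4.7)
The plan is to prove the lemma representative-wise: fix an elliptic curve $E/\Q$ and a profinite level structure $\alpha$ on $E$, set $G = \rho_{E, \alpha}(G_{\Q}) \in \cG_{E}$, and show the equality $G \cap \SL(\Zhat) = [G, G]$ as closed subgroups of $\GL(\Zhat)$. Since both operations commute with conjugation in $\GL(\Zhat)$, this will yield the claim at the level of conjugacy classes. The inclusion $[G, G] \leq G \cap \SL(\Zhat)$ is immediate, because any commutator has determinant $1$. So the content of the lemma is the reverse inclusion.

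The strategy for the reverse inclusion is to identify the abelianization $G / [G, G]$ with $\Zhat*$ via the determinant map. The two key inputs are the following. First, by properties of the Weil pairing, the composition
\[
	\det \circ \rho_{E, \alpha} : G_{\Q} \to \GL(\Zhat) \to \Zhat*
\]
is the cyclotomic character $\chi_{\mathrm{cyc}}$. Second, by the Kronecker--Weber theorem, $\Qab = \Q^{\mathrm{cyc}}$, so $\chi_{\mathrm{cyc}}$ is precisely the projection $G_{\Q} \twoheadrightarrow G_{\Q}^{\mathrm{ab}} \cong \Zhat*$.

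With these inputs, I would argue as follows. The surjection $\rho_{E, \alpha}: G_{\Q} \twoheadrightarrow G$ induces a surjection $G_{\Q}^{\mathrm{ab}} \twoheadrightarrow G / [G, G]$, which fits into the commutative diagram
\[
	\begin{tikzcd}
		G_{\Q} \ar[r, twoheadrightarrow] \ar[d, twoheadrightarrow] & G \ar[d, twoheadrightarrow] \ar[rd, "\det"] & \\
		G_{\Q}^{\mathrm{ab}} \ar[r, twoheadrightarrow] & G / [G, G] \ar[r, "\det"] & \Zhat*.
	\end{tikzcd}
\]
The composite along the bottom row equals $\chi_{\mathrm{cyc}}$ viewed on $G_{\Q}^{\mathrm{ab}}$, which by Kronecker--Weber is the identity on $\Zhat*$. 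Hence the induced map $G_{\Q}^{\mathrm{ab}} \to G/[G, G]$ is injective, and the determinant map $G / [G, G] \to \Zhat*$ is an isomorphism. Consequently $\ker(\det|_{G}) = [G, G]$, i.e.\ $G \cap \SL(\Zhat) = [G, G]$.

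The step that deserves the most care is the topological one: since $G$ is a closed subgroup of $\GL(\Zhat)$, the commutator subgroup $[G, G]$ must be interpreted as the closed commutator subgroup so that the quotient $G / [G, G]$ is a Hausdorff profinite group and the diagram chase above is meaningful. I do not expect a genuine obstacle here, as continuity of $\det$ and of $\rho_{E, \alpha}$ makes the induced maps between profinite groups automatically continuous; the algebraic argument then transfers directly to the profinite setting.
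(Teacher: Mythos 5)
The paper cites this lemma directly from Zywina and supplies no proof of its own, so there is no in-paper argument to compare against. Your argument is correct and is the standard one — using that $\det \circ \rho_{E, \alpha}$ is the cyclotomic character together with Kronecker--Weber to identify $G_{\Q}^{\mathrm{ab}} \cong \Zhat*$, which forces the induced surjection $G_{\Q}^{\mathrm{ab}} \twoheadrightarrow G/[G,G]$ and then $\det : G/[G,G] \to \Zhat*$ to be isomorphisms, so $[G,G] = \ker(\det|_{G}) = G \cap \SL(\Zhat)$ — and this matches the argument in the cited reference.
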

	
	\begin{lemma}[{\cite[Proposition 8.1]{zywina2024explicit}, \cite[Lemma 4.1]{zywina2025open}}]
		Let $G$ be an open subgroup of $\GL(\Zhat)$. Then,
		\[
			[G, G] = [G_{\agr}, G_{\agr}].
		\]		
	\end{lemma}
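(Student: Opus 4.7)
The plan is to handle the two inclusions separately. The forward inclusion $[G, G] \leq [G_{\agr}, G_{\agr}]$ is immediate from $G \leq G_{\agr}$, so all of the content is in the reverse inclusion.

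The key structural input I intend to exploit is that the $\SL$-level of $[G, G]$ is, by definition of $N$, supported only on primes dividing $N$. Using the decomposition $\GL(\Zhat) \cong \prod_{p} \GL(\Z_p)$, I define the closed normal subgroup
\[
    K \;=\; \prod_{p \nmid N} \SL(\Z_p) \;\leq\; \GL(\Zhat).
\]
Writing $M$ for the $\SL$-level of $[G, G]$, the definition of $\SL$-level gives $\ker(\SL(\Zhat) \to \SL(\Z[M])) \leq [G, G]$, and since every prime divisor of $M$ lies in $N$, this kernel contains $K$. In particular $K \leq [G, G]$.

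I then pass to the quotient
\[
    Q \;=\; \GL(\Zhat)/K \;\cong\; \prod_{p \mid N} \GL(\Z_p) \;\times\; \prod_{p \nmid N} \Z_p^{\times}.
\]
In $Q$, the image of $\prod_{p \nmid N} \GL(\Z_p)$ collapses to $\prod_{p \nmid N} \Z_p^{\times}$, which is central, and the image of the scalar subgroup $\Zhat^{\times}$ is also central. Hence the image of $G_{\agr}$ equals the image of $G$ multiplied by a central subgroup of $Q$, from which I read off $[\overline{G_{\agr}}, \overline{G_{\agr}}] = [\overline{G}, \overline{G}]$. Pulling back through the quotient map yields $[G_{\agr}, G_{\agr}] \cdot K = [G, G] \cdot K$, and since $K \leq [G, G]$ the right-hand side is simply $[G, G]$. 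Therefore $[G_{\agr}, G_{\agr}] \leq [G, G]$, completing the proof.

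The main subtlety I anticipate is one of conventions: $[G, G]$ should be interpreted in the profinite (closed) sense so that its $\SL$-level is meaningful, and the identity $\pi([H_1, H_2]) = [\pi(H_1), \pi(H_2)]$ used above should be applied to the corresponding closed commutator subgroups through the continuous quotient $\pi : \GL(\Zhat) \to Q$. Both are standard facts about profinite groups, so beyond careful bookkeeping I expect no real obstacle.
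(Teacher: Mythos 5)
The paper does not supply a proof of this lemma; it is quoted directly from Zywina's work, so there is no in-paper argument to compare against. Your derivation is correct and self-contained: the inclusion $K = \prod_{p \nmid N}\SL(\Z_p) \leq [G,G]$ follows exactly as you say because $N$ is by construction the radical of the $\SL$-level of $[G,G]$; passing to $Q = \GL(\Zhat)/K \cong \prod_{p\mid N}\GL(\Z_p) \times \prod_{p\nmid N}\Z_p^\times$ makes both $\Zhat^\times$ and $\prod_{p\nmid N}\GL(\Z_p)$ map into the center, so $\overline{G_{\agr}}$ is $\overline{G}$ times a central subgroup and the commutator subgroups of the images coincide; and pulling back via $[G_{\agr},G_{\agr}]K = [G,G]K = [G,G]$ gives the nontrivial inclusion. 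The one subtlety you flag — that these are closed commutator subgroups and that $\pi$ intertwines the closure operations — is the right thing to worry about and is handled by compactness of the source, so the argument is complete. This is the natural route and is, to the best of my knowledge, essentially the same idea as in Zywina's cited proof.
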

	
	Combining both results shows that, for any elliptic curve $E / \Q$, the intersection $\cG_{E} \cap \SL(\Zhat)$ is equal to the commutator subgroup $[(\cG_{E})_{\agr}, (\cG_{E})_{\agr}]$. Therefore, in order to determine the possible intersections $\cG_{E} \cap \SL(\Zhat)$, as $E$ varies through all non-CM elliptic curves over $\Q$, it suffices to classify the possible agreeable closures $(\cG_{E})_{\agr}$, for the same elliptic curves $E$. This is precisely the work undertaken by Zywina in \cite{zywina2024explicit}, which gives a conjectural classification for such agreeable closures. We summarize this classification below.
	
	Note that, for non-CM elliptic curves $E / \Q$ with $j$-invariant $j$, we have that $\cG_{j} = \pm \cG_{E}$. Therefore, while the work of Zywina gives a classification of the intersections $\cG_{E} \cap \SL(\Zhat)$, it is straightforward to convert this to a classification of the intersections $\cG_{j} \cap \SL(\Zhat)$. We present the latter, as it will be easier to apply in later sections.
	
	\begin{conjecture}[{\cite[Section 14]{zywina2024explicit}}] \label{conj:finite_degrees:sl_intersections}
		Let $j \in \Q$ be a non-CM rational $j$-invariant. If the modular curve $X_{(\cG_{j})_{\agr}}$ has infinitely many rational points, then the intersection $\cG_{j} \cap \SL(\Zhat)$ is one of the conjugacy classes given in Table \ref{tbl:finite_b_closures:infinite_sl_intersections_zywina}. On the other hand, if the modular curve $X_{(\cG_{j})_{\agr}}$ has finitely many rational points, then $j$ and $\cG_{j}$ are given in Table \ref{tbl:finite_b_closures:exceptional_j_invariants}.
	\end{conjecture}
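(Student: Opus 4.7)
The plan is to leverage the two lemmas stated immediately before the conjecture in order to reduce the problem to a classification of the agreeable closures $(\cG_{j})_{\agr}$. Since $\cG_{j} = \pm \cG_{E}$ for any non-CM elliptic curve $E/\Q$ with $j(E) = j$, and since $\cG_{E} \cap \SL(\Zhat) = [\cG_{E}, \cG_{E}] = [(\cG_{E})_{\agr}, (\cG_{E})_{\agr}]$, the intersection $\cG_{j} \cap \SL(\Zhat)$ is completely determined by the agreeable closure $(\cG_{j})_{\agr}$. It therefore suffices to classify the possible conjugacy classes of agreeable closures arising from non-CM rational $j$-invariants, and to extract the commutator subgroup from each.

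The first step is to enumerate, up to conjugation, all agreeably closed open subgroups $G$ of $\GL(\Zhat)$ with $\det(G) = \Zhat*$ that can plausibly arise as $(\cG_{j})_{\agr}$ for some non-CM $j \in \Q$. Since an agreeably closed $G$ contains the scalar matrices $\Zhat*$ and surjects onto $\GL(\Z_{p})$ for every prime $p$ outside a finite set depending on $G$, such a $G$ is determined by its image in $\GL(\Z/N\Z)$ for a suitable integer $N$, and so the classification decomposes across a finite set of primes. One also imposes the necessary local obstructions, such as the existence of a complex conjugation element. For each candidate $G$ one then computes the commutator $[G, G]$, producing the list of possible intersections $\cG_{j} \cap \SL(\Zhat)$ conditional on $G$ arising.

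Next I would separate the candidates according to whether the modular curve $X_{G}$ has infinitely or finitely many rational points. For the former, by Hilbert irreducibility in the spirit of Theorem \ref{thm:infinite_degrees:genus_0_occurs_infinitely}, the $j$-map sends generic rational points of $X_{G}$ to non-CM $j$-invariants whose adelic image has agreeable closure exactly $G$, and these contribute to Table \ref{tbl:finite_b_closures:infinite_sl_intersections_zywina}. For the latter case, one must explicitly compute the non-cuspidal, non-CM rational points of $X_{G}$, and for each such point compute $\cG_{j}$ precisely (and not merely its agreeable closure) to populate Table \ref{tbl:finite_b_closures:exceptional_j_invariants}.

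The main obstacle is this second case: a rigorous determination of the non-cuspidal, non-CM rational points on every such $X_{G}$ with finitely many rational points. For genera up to roughly two this is feasible by Chabauty, the Mordell--Weil sieve, and quadratic Chabauty methods, as developed for example by Bilu--Parent--Rebolledo and Balakrishnan et al. However, for higher genera, and in particular for the normalizers of non-split Cartan subgroups $X_{\mathrm{ns}}^{+}(p)$ at large primes $p$, proving the absence of sporadic rational points is essentially Serre's uniformity conjecture, which remains open. This is precisely why Zywina's statement must be formulated as a conjecture rather than a theorem, and why the present paper proceeds by assuming it.
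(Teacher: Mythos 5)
This statement is a conjecture, not a theorem, and the paper does not prove it: it is cited from Zywina's work \cite{zywina2024explicit} and assumed throughout. Your final paragraph correctly identifies the crux: the finite-rational-point case would require, among other things, ruling out sporadic rational points on normalizer-of-non-split-Cartan curves $X_{\mathrm{ns}}^{+}(p)$ for all primes $p$, which is Serre's uniformity conjecture, and additionally determining the rational points on a number of other high-genus modular curves. The remark following the conjecture in the paper says exactly this, splitting the dependence into Serre's uniformity conjecture and a further conjecture about rational points on specific high-genus curves.

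Your outline of what a proof would require is a reasonable sketch of Zywina's framework: reduce $\cG_{j} \cap \SL(\Zhat)$ to $[(\cG_{j})_{\agr}, (\cG_{j})_{\agr}]$ via the two cited lemmas, enumerate agreeably closed subgroups with full determinant satisfying the local conditions, split according to whether $X_{G}$ has infinitely or finitely many rational points, and handle the infinite case via Hilbert irreducibility (as in Theorem \ref{thm:infinite_degrees:genus_0_occurs_infinitely}) and the finite case via explicit determination of rational points. One small caution: for the infinite case you would also need to verify that the enumeration of agreeably closed subgroups of a given level is actually finite and exhaustive (Zywina's analysis involves bounding the levels that can occur, which is itself non-trivial), and that every candidate in the infinite case is genuinely realized by a non-CM $j$ with that exact agreeable closure; Hilbert irreducibility handles the realization once the genus-zero and rank considerations are in place. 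But since you correctly conclude this cannot be proven unconditionally and must remain a conjecture, your assessment is accurate.
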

	
	\begin{remark}
		While we have stated this as a single conjecture, the validity of this result is dependent on two distinct conjectures. The first is Serre's uniformity conjecture \cite[Conjecture 1.2]{zywina2024explicit}, while the second is a conjecture on the set of rational points on a number of modular curves of high genus. We note that the first statement in the conjecture is solely dependent on Serre's uniformity conjecture, while the second statement is dependent on both. For more information, we refer the reader to \cite{zywina2024explicit}.
	\end{remark}
	
	\subsection{Agreeable closures with infinitely many rational points} \label{sec:finite_degrees:infinite_agreeable}
	
	Given this conjecture on the intersections $\cG_{j} \cap \SL(\Zhat)$, we can now turn our attention towards determining the $\cB_{0}(n)$- and $\cB_{1}(n)$-closures of the extended adelic Galois image $\cG_{j}$, for all non-CM $j$-invariants $j \in \Q$. To do so, we split into the two cases given in Conjecture \ref{conj:finite_degrees:sl_intersections}, namely whether the modular curve $X_{(\cG_{j})_{\agr}}$ contains infinitely many rational points.
	
	In this section, we tackle the case where the modular curve $X_{(\cG_{j})_{\agr}}$ contains infinitely many rational points. Throughout, we let $j \in \Q$ be a non-CM $j$-invariant, and suppose that the modular curve $X_{(\cG_{j})_{\agr}}$ has infinitely many rational points. Our goal is now to classify the possible $\cB_{0}(n)$- and $\cB_{1}(n)$-closures of the extended adelic Galois image $\cG_{j}$. 
	
	The case of $\cB_{0}(n)$-closures is much easier than the case of $\cB_{1}(n)$-closures, as the $\cB_{0}(n)$-closure must contain the agreeable closure, as the following result shows.
	
	\begin{lemma} \label{thm:finite_degrees:b0_contains_agreeable}
		Let $G$ be an open subgroup of $\GL(\Zhat)$. Then, we have
		\[
			G_{\agr} \leq \clo{\cB_{0}}{G}.
		\]
	\end{lemma}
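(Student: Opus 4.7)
The plan is to show that $G_{\agr}$ is $\cB_{0}$-equivalent to $G$; combined with the evident inclusion $G \leq G_{\agr}$, this will yield $G_{\agr} \leq \clo{\cB_{0}}{G}$ by definition of the $\cB_{0}$-closure. Equivalently, it suffices to verify that $G_{\agr} \subseteq k B_{0} k^{-1} \cdot G$ for every $k \in \GL(\Zhat)$.

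Write $U := \prod_{p \nmid N} \GL(\Z_{p})$ and $V := \prod_{p \nmid N} \SL(\Z_{p})$, viewed as subgroups of $\GL(\Zhat) \cong \prod_{p} \GL(\Z_{p})$, so that $G_{\agr} = \Zhat* \cdot G \cdot U$. The key preliminary observation is that $V \leq G$. Indeed, if $n$ denotes the $\SL$-level of $[G, G]$, then $N$ is by definition the product of primes dividing $n$, and $\ker(\pi_{n}) \cap \SL(\Zhat) \leq [G, G] \leq G$; under the product decomposition this kernel contains $\prod_{p \nmid n} \SL(\Z_{p}) = V$.

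At every prime $p$ there is a decomposition $\GL(\Z_{p}) = B_{0}(\Z_{p}) \cdot \SL(\Z_{p})$, obtained by writing $g = \begin{bsmallmatrix} \det g & 0 \\ 0 & 1 \end{bsmallmatrix} \cdot s$ with $s \in \SL(\Z_{p})$. Applied prime-by-prime this gives $U \subseteq \big( \prod_{p \nmid N} B_{0}(\Z_{p}) \big) \cdot V \subseteq B_{0} \cdot V$. Since $\Zhat* \leq B_{0}$ (scalar matrices are upper triangular) and both $\Zhat*$ and $U$ are normal in $\GL(\Zhat)$, reordering yields
\[
    G_{\agr} = \Zhat* \cdot U \cdot G \subseteq B_{0} \cdot V \cdot G = B_{0} \cdot G,
\]
where the last equality uses $V \leq G$. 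For an arbitrary conjugate $k B_{0} k^{-1}$ of $B_{0}$, the normality of $\Zhat*$, $U$ and $V$ gives $k^{-1} (\Zhat* \cdot U) k = \Zhat* \cdot U \subseteq B_{0} \cdot V$, hence $\Zhat* \cdot U \subseteq k B_{0} k^{-1} \cdot k V k^{-1} = k B_{0} k^{-1} \cdot V$, and so $G_{\agr} = \Zhat* \cdot U \cdot G \subseteq k B_{0} k^{-1} \cdot V \cdot G = k B_{0} k^{-1} \cdot G$, as required.

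The main technical point is extracting the containment $V \leq G$ from the definition of the agreeable closure; once this is in place, the remainder of the argument is a routine manipulation with normal subgroups together with the local decomposition $\GL(\Z_{p}) = B_{0}(\Z_{p}) \cdot \SL(\Z_{p})$.
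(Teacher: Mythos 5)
Your proof is correct. It reaches the same conclusion as the paper's by a route that is self-contained rather than modular. The paper first applies Lemma \ref{thm:closures:determinant_gives_sl2_level} (with $n$ the $\SL$-level of $G$, not of $[G,G]$) to get $\ker(\pi_{n}) \leq \clo{\cB_{0}}{G}$, then observes $\prod_{p \nmid N} \GL(\Z_{p}) \leq \ker(\pi_{n})$, and separately invokes Lemma \ref{thm:closures:normal_product_equivalent} for $\Zhat^{\times}G \leq \clo{\cB_{0}}{G}$, before combining the three pieces of $G_{\agr}$ using the fact that $\clo{\cB_{0}}{G}$ is a group. You instead verify the defining containment $G_{\agr} \subseteq kB_{0}k^{-1}\cdot G$ directly for all $k$, by (i) extracting $V = \prod_{p\nmid N}\SL(\Z_{p}) \leq [G,G] \leq G$ from the level of the commutator, (ii) proving the Iwasawa-type local decomposition $\GL(\Z_{p}) = B_{0}(\Z_{p})\cdot\SL(\Z_{p})$ to deduce $U \subseteq B_{0}V$, and (iii) shuffling normal subgroups. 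Step (ii) is in effect a concrete re-derivation of the determinant condition $\det(B_{0}\cap\ker(\pi_{m})) = 1+m\Zhat$ that underlies Lemma \ref{thm:closures:determinant_gives_sl2_level}, so the two proofs share the same mechanism; yours trades the reuse of general lemmas for an elementary, explicit local argument, and incidentally sidesteps the small bookkeeping step of relating the $\SL$-level of $G$ to that of $[G,G]$, since you work directly with the latter. What your route loses is that, unlike the paper's Lemma \ref{thm:closures:determinant_gives_sl2_level}, it does not simultaneously produce the statement that $\clo{\cB_{0}}{G}$ is open of level dividing the $\SL$-level of $G$, which the paper uses elsewhere.
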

	
	\begin{proof}
		Let $n$ be the $\SL$-level of $G$. By Lemma \ref{thm:closures:determinant_gives_sl2_level}, the $\cB_{0}$-closure $\clo{\cB_{0}}{G}$ is an open subgroup of $\GL$-level dividing $n$. In particular, we have that
		\[
			\ker(\pi_{n}) \leq \clo{\cB_{0}}{G}.
		\]
		Let $N$ be the product of the primes dividing the $\SL$-level of the commutator subgroup $[G, G]$. Since $[G, G] \leq G$, it follows that each prime dividing $n$ also divides $N$. In particular, we have
		\[
			\prod_{p \nmid N} \GL(\Z_{p}) \leq \prod_{p \nmid n} \GL(\Z_{p}) \leq \ker(\pi_{n}) \leq \clo{\cB_{0}}{G}.
		\]
		Denote by $\Zhat*$ the set of scalar matrices of $\GL(\Zhat)$, and note that $\Zhat*$ is a normal subgroup of $\GL(\Zhat)$. Since $\Zhat* \leq \cB_{0}$, it follows by Lemma \ref{thm:closures:normal_product_equivalent} that the groups $G$ and $\Zhat* G$ are $\cB_{0}$-equivalent. In particular, we have $\clo{\cB_{0}}{G} = \clo{\cB_{0}}{(\Zhat* G)}$, and so $\Zhat* G \leq \clo{\cB_{0}}{G}$. Thus, we obtain that
		\[
			G_{\agr} = \Zhat* G \left( \prod_{p \nmid N} \GL(\Z_{p}) \right) \leq \clo{\cB_{0}}{G},
		\]
		as required.
	\end{proof}
	
	As a corollary, we obtain that the $\cB_{0}(n)$-closure of $\cG_{j}$ must be one of the conjugacy classes computed in Section \ref{sec:infinite_degrees:infinite_closed_subgroups}.
	
	\begin{corollary} \label{thm:finite_degrees:b0_closure_infinite_agreeable}
		Let $j \in \Q$ be a non-CM $j$-invariant, and suppose that the modular curve $X_{(\cG_{j})_{\agr}}$ has infinitely many rational points. Then the conjugacy class $\clo{\cB_{0}(n)}{\cG_{j}}$ is one of the conjugacy classes given in Table \ref{tbl:infinite_b_closures:infinite_b0_closures}, for all $n \geq 1$.
	\end{corollary}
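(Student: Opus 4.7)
The plan is to reduce this to Theorem \ref{thm:infinite_degrees:infinite_closed_subgroups} by verifying that $\clo{\cB_{0}(n)}{\cG_{j}}$ satisfies the two hypotheses of that theorem, namely that it is a conjugacy class of open $\cB_{0}(n)$-closed subgroups of $\GL(\Zhat)$, and that the associated modular curve has infinitely many rational points. The first condition is essentially automatic: openness follows from Corollary \ref{thm:closures:open_closures_preserve_level} applied with $H = B_{0}(n)$, while $\cB_{0}(n)$-closedness holds by construction of the $\cB_{0}(n)$-closure. The entire content of the argument is therefore to verify the second condition.

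The idea for the second condition is to exhibit a chain of inclusions linking $(\cG_{j})_{\agr}$ to $\clo{\cB_{0}(n)}{\cG_{j}}$, and then push rational points forward along the induced morphism of modular curves. First, Lemma \ref{thm:finite_degrees:b0_contains_agreeable} gives $(\cG_{j})_{\agr} \leq \clo{\cB_{0}}{\cG_{j}}$. Second, since $B_{0} = \bigcap_{n \geq 1} B_{0}(n) \leq B_{0}(n)$, Lemma \ref{thm:closures:inclusion_preserving_in_H} yields $\clo{\cB_{0}}{\cG_{j}} \leq \clo{\cB_{0}(n)}{\cG_{j}}$. Concatenating these gives the inclusion
\[
	(\cG_{j})_{\agr} \leq \clo{\cB_{0}(n)}{\cG_{j}},
\]
which induces a finite (in particular surjective) morphism of smooth projective curves $f : X_{(\cG_{j})_{\agr}} \to X_{\clo{\cB_{0}(n)}{\cG_{j}}}$ defined over $\Q$.

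Since $f$ is defined over $\Q$, it sends $\Q$-rational points to $\Q$-rational points. If the target had only finitely many rational points, then by finiteness of the fibers of $f$, the source would also have only finitely many rational points, contradicting the hypothesis that $X_{(\cG_{j})_{\agr}}$ has infinitely many rational points. Hence $X_{\clo{\cB_{0}(n)}{\cG_{j}}}$ has infinitely many rational points, and Theorem \ref{thm:infinite_degrees:infinite_closed_subgroups} places $\clo{\cB_{0}(n)}{\cG_{j}}$ in Table \ref{tbl:infinite_b_closures:infinite_b0_closures}. There is no serious obstacle here: the result is essentially a formal corollary of Lemma \ref{thm:finite_degrees:b0_contains_agreeable} combined with the monotonicity Lemma \ref{thm:closures:inclusion_preserving_in_H} and the classification already established in Theorem \ref{thm:infinite_degrees:infinite_closed_subgroups}.
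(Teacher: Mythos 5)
Your proof is correct and follows essentially the same route as the paper: apply Lemma~\ref{thm:finite_degrees:b0_contains_agreeable} and Lemma~\ref{thm:closures:inclusion_preserving_in_H} to get the chain $(\cG_{j})_{\agr} \leq \clo{\cB_{0}}{\cG_{j}} \leq \clo{\cB_{0}(n)}{\cG_{j}}$, push infinitely many rational points forward along the induced morphism, and invoke Theorem~\ref{thm:infinite_degrees:infinite_closed_subgroups}. Your additional remark that openness follows from Corollary~\ref{thm:closures:open_closures_preserve_level} is correct but left implicit in the paper.
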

	
	\begin{proof}
		Fix $n \geq 1$. By Lemma \ref{thm:finite_degrees:b0_contains_agreeable}, we have that $(\cG_{j})_{\agr} \leq \clo{\cB_{0}}{\cG_{j}}$. Therefore, by Lemma \ref{thm:closures:inclusion_preserving_in_H}, we have that
		\[
			(\cG_{j})_{\agr} \leq \clo{\cB_{0}}{\cG_{j}} \leq \clo{\cB_{0}(n)}{\cG_{j}}.
		\]
		Thus, we obtain an inclusion morphism of modular curves $X_{(\cG_{j})_{\agr}} \to X_{\clo{\cB_{0}(n)}{\cG_{j}}}$. As the modular curve $X_{(\cG_{j})_{\agr}}$ has infinitely many rational points, it follows that the modular curve $X_{\clo{\cB_{0}(n)}{\cG_{j}}}$ also has infinitely many rational points. Therefore, we obtain that $\clo{\cB_{0}(n)}{\cG_{j}}$ is a conjugacy class of open $\cB_{0}(n)$-closed subgroups of $\GL(\Zhat)$ such that the modular curve $X_{\clo{\cB_{0}(n)}{\cG_{j}}}$ has infinitely many rational points. By Theorem \ref{thm:infinite_degrees:infinite_closed_subgroups}, it follows that the conjugacy class $\clo{\cB_{0}(n)}{\cG_{j}}$ is listed in Table \ref{tbl:infinite_b_closures:infinite_b0_closures}.
	\end{proof}
	
	The case of $\cB_{1}(n)$-closures is much more difficult, as the $\cB_{1}(n)$-closure of $\cG_{j}$ does not necessarily contain the agreeable closure, since $B_{1}(n)$ does not contain all of the scalar matrices $\Zhat*$. Therefore, it is possible for the modular curve $X_{\clo{\cB_{1}}{\cG_{j}}}$ to contain finitely many rational points while the modular curve $X_{(\cG_{j})_{\agr}}$ contains infinitely many.

	Instead, we aim to utilize the knowledge of $\cG_{j} \cap \SL(\Zhat)$ given in Conjecture \ref{conj:finite_degrees:sl_intersections} to classify the possible $\cB_{1}(n)$-closures $\clo{\cB_{1}(n)}{\cG_{j}}$. We obtain the following result, whose proof will occupy the remainder of this section.
	
	\begin{theorem} \label{thm:finite_degrees:b1_closure_infinite_agreeable}
		Fix $n \geq 1$. Let $j \in \Q$ be a non-CM $j$-invariant such that the intersection $\cG_{j} \cap \SL(\Zhat)$ is one of the conjugacy classes listed in Table \ref{tbl:finite_b_closures:infinite_sl_intersections_zywina}. Then, one of the following holds:
		\begin{itemize}
			\item The $\cB_{1}(n)$-closure of the conjugacy class $\cG_{j}$ is given in Table \ref{tbl:infinite_b_closures:infinite_b1_closures}.
			\item The $j$-invariant $j$ and the conjugacy class $\cG_{j}$ are given in one of the unshaded rows of Table \ref{tbl:finite_b_closures:exceptional_j_infinite_sl}.
		\end{itemize}
	\end{theorem}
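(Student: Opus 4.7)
My plan is to enumerate, at a bounded level, all conjugacy classes $\cG$ of open subgroups of $\GL(\Zhat)$ that could arise as $\cG_{j}$ under the hypothesis that $\cG_{j} \cap \SL(\Zhat)$ is listed in Table \ref{tbl:finite_b_closures:infinite_sl_intersections_zywina}, compute the $\cB_{1}(n)$-closure of each candidate, and then classify the outcomes according to whether this closure appears in Table \ref{tbl:infinite_b_closures:infinite_b1_closures}.

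The reduction to a finite level proceeds as follows. Using the diagonal matrices $\begin{bsmallmatrix}1 & 0 \\ 0 & d\end{bsmallmatrix}$ for $d \in \Zhat*$ with $d \equiv 1 \pmod{m}$, one checks directly that $B_{1}(n)$ (and hence every $\GL(\Zhat)$-conjugate) satisfies the determinant hypothesis of Lemma \ref{thm:closures:determinant_gives_sl2_level}. Letting $k$ denote the $\SL$-level of $\cG_{j} \cap \SL(\Zhat)$, which is finite and can be read off Table \ref{tbl:finite_b_closures:infinite_sl_intersections_zywina}, Lemma \ref{thm:closures:determinant_gives_sl2_level} then gives $\ker(\pi_{k}) \leq \clo{\cB_{1}(n)}{\cG_{j}}$. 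Consequently, by Lemma \ref{thm:closures:normal_product_equivalent}, $\cG_{j}$ and $\cG_{j}(k)$ have the same $\cB_{1}(n)$-closure, so the problem can be transported to $\GL(\Z[k])$. For each class $\cS$ listed in Table \ref{tbl:finite_b_closures:infinite_sl_intersections_zywina}, I would then enumerate all conjugacy classes of subgroups of $\GL(\Z[k])$ containing $-I$, with full determinant $\Z*[k]$, and with $\SL(\Z[k])$-intersection conjugate to $\pi_{k}(\cS)$; this is a finite computation.

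For each such candidate $\cG$, the $\cB_{1}(n)$-closure $\clo{\cB_{1}(n)}{\cG}$ can be computed via the explicit prescription of Remark \ref{rmk:closures:explicit_description}, applied through the finite-level correspondence \eqref{eq:preliminaries:finite_group_action_correspondence} at level $\operatorname{lcm}(k, n)$, which is well-suited to a \texttt{Magma} implementation. If the resulting closure is one of the conjugacy classes in Table \ref{tbl:infinite_b_closures:infinite_b1_closures}, the first alternative of the theorem holds. Otherwise, set $\cG' := \clo{\cB_{1}(n)}{\cG}$; since $\cG'$ is $\cB_{1}(n)$-closed but is not listed in Table \ref{tbl:infinite_b_closures:infinite_b1_closures}, Theorem \ref{thm:infinite_degrees:infinite_closed_subgroups} forces the modular curve $X_{\cG'}$ to carry only finitely many rational points.

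The main obstacle is to determine, for each such exceptional $\cG'$, the finite list of pairs $(j, \cG_{j})$ that actually realize it. Any such $j$ satisfies $\cG_{j} \leq \cG'$, so by Corollary \ref{thm:preliminaries:modular_curve_rational_point} it corresponds to a rational point on $X_{\cG'}$; and to pin down $\cG_{j}$ precisely among the subgroups of $\cG'$ at level $k$ one must further examine rational points on the intermediate modular curves $X_{\cG}$, several of which have positive genus. This explicit rational-point computation on the resulting finite collection of modular curves is precisely the content of Section \ref{sec:rational_points} and its treatment of the 160 curves referenced there. Once those rational points are determined, reading off their $j$-invariants and matching them with the candidate conjugacy classes produces exactly the unshaded rows of Table \ref{tbl:finite_b_closures:exceptional_j_infinite_sl}, completing the proof.
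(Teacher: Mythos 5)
Your broad strategy is the same as the paper's: reduce to a finite enumeration at a bounded level, compute $\cB_{1}(n)$-closures via Remark \ref{rmk:closures:explicit_description} and the correspondence \eqref{eq:preliminaries:finite_group_action_correspondence}, and invoke the rational-point computations of Section \ref{sec:rational_points} for the positive-genus closures. The reduction of the $\GL$-level to the $\SL$-level via Lemma \ref{thm:closures:determinant_gives_sl2_level} is also the same idea that underlies the paper's argument. However, the specific enumeration criterion you propose has a gap. You wish to enumerate candidates for $\cG_{j}(k)$, and you impose the condition that each candidate have $\SL(\Z[k])$-intersection conjugate to $\pi_{k}(\cS)$. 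But passing from $G$ to $G(k) = G\ker(\pi_{k})$ does not preserve the $\SL$-intersection: $\cG_{j}(k) \cap \SL(\Zhat)$ can strictly contain $\cS = \cG_{j} \cap \SL(\Zhat)$. Already for a Serre curve, $\cS$ is the entry 2A0 of Table \ref{tbl:finite_b_closures:infinite_sl_intersections_zywina} (index 2 in $\SL(\Zhat)$, $\SL$-level $k=2$), while $\cG_{j}(2) = \GL(\Zhat)$ has $\SL$-intersection all of $\SL(\Zhat)$. Thus $\cG_{j}(k)$ need not appear in your list, and you have no guarantee that the closure $\clo{\cB_{1}(n)}{\cG_{j}} = \clo{\cB_{1}(n)}{\cG_{j}(k)}$ is among those you compute. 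The paper avoids this entirely by enumerating the candidate \emph{closures} rather than the pre-closure groups: $\clo{\cB_{1}(n)}{\cG_{j}}$ is $\cB_{1}$-closed by Lemma \ref{thm:closures:inclusion_preserving_in_H} and contains $\cS$, so it lies in the finite set of $\cB_{1}$-closed conjugacy classes containing some entry of the table (2651 classes), a condition involving only the \emph{containment} $\cS \leq \cG$, not equality of $\SL$-parts. Relaxing your equality to containment would repair the logic, and restricting further to $\cB_{1}$-closed overgroups as the paper does is a helpful pruning of what would otherwise be a very large search.

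A smaller point: Lemma \ref{thm:closures:normal_product_equivalent} does not directly yield that $\cG_{j}$ and $\cG_{j}(k)$ have the same $\cB_{1}(n)$-closure, since that lemma requires the normal subgroup $N$ to lie inside $H = B_{1}(n)$, and $\ker(\pi_{k}) \leq B_{1}(n)$ would force $n \mid k$. The conclusion you want is nevertheless correct: once you know $\ker(\pi_{k}) \leq \clo{\cB_{1}(n)}{\cG_{j}}$ (from Lemma \ref{thm:closures:determinant_gives_sl2_level}), one has $\cG_{j} \leq \cG_{j}(k) \leq \clo{\cB_{1}(n)}{\cG_{j}}$, and the claim then follows from the monotonicity of Lemma \ref{thm:closures:inclusion_preserving_in_G} together with the idempotence of the closure operation, rather than from Lemma \ref{thm:closures:normal_product_equivalent}.
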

	
	\begin{proof}
		By Lemma \ref{thm:closures:inclusion_preserving_in_H}, we know that $\clo{\cB_{1}(n)}{\cG_{j}}$ is a conjugacy class of $\cB_{1}$-closed subgroups of $\GL(\Zhat)$, and that $\clo{\cB_{1}}{\cG_{j}} \leq \clo{\cB_{1}(n)}{\cG_{j}}$. In addition, by Lemma \ref{thm:closures:inclusion_preserving_in_G}, we have
		\[
			\clo{\cB_{1}}{(\cG_{j} \cap \SL(\Zhat))} \leq \clo{\cB_{1}}{\cG_{j}} \leq \clo{\cB_{1}(n)}{\cG_{j}}.
		\]
		By Lemma \ref{thm:closures:determinant_gives_sl2_level}, the conjugacy class $\clo{\cB_{1}}{(\cG_{j} \cap \SL(\Zhat))}$ is a conjugacy class of open subgroups of $\GL(\Zhat)$. In particular, there are finitely many conjugacy classes $\cG$ of subgroups of $\GL(\Zhat)$ such that $\clo{\cB_{1}}{(\cG_{j} \cap \SL(\Zhat))} \leq \cG$. Therefore, we know that $\clo{\cB_{1}(n)}{\cG_{j}}$ is one of the finitely many $\cB_{1}$-closed conjugacy class of subgroups of $\GL(\Zhat)$ containing one of the conjugacy classes listed in Table \ref{tbl:finite_b_closures:infinite_sl_intersections_zywina}.
		
		We enumerate these conjugacy classes using \texttt{Magma}; the computation is implemented in the file \texttt{maximal\_finite\_b1\_closed.m}. We find 2651 such conjugacy classes, which are displayed in Figure \ref{fig:finite_b_closures:finite_b_closures}. Each node in the graph represents one of the 2651 conjugacy classes, while the edges indicate containment: there is an edge $\cG \to \cG'$ if and only if $\cG' \leq \cG$ and $\cG'$ is maximal in this property. The graph is topologically sorted, so that each edge goes from top to bottom, and the arrowheads are omitted. The vertices are grouped by the level of the associated conjugacy class, while the opacity of edges between conjugacy classes of differing levels has been reduced for clarity.
		
		\begin{figure}
			\centering
			\begin{sideways}
				\includegraphics[width=0.9\textheight, keepaspectratio]{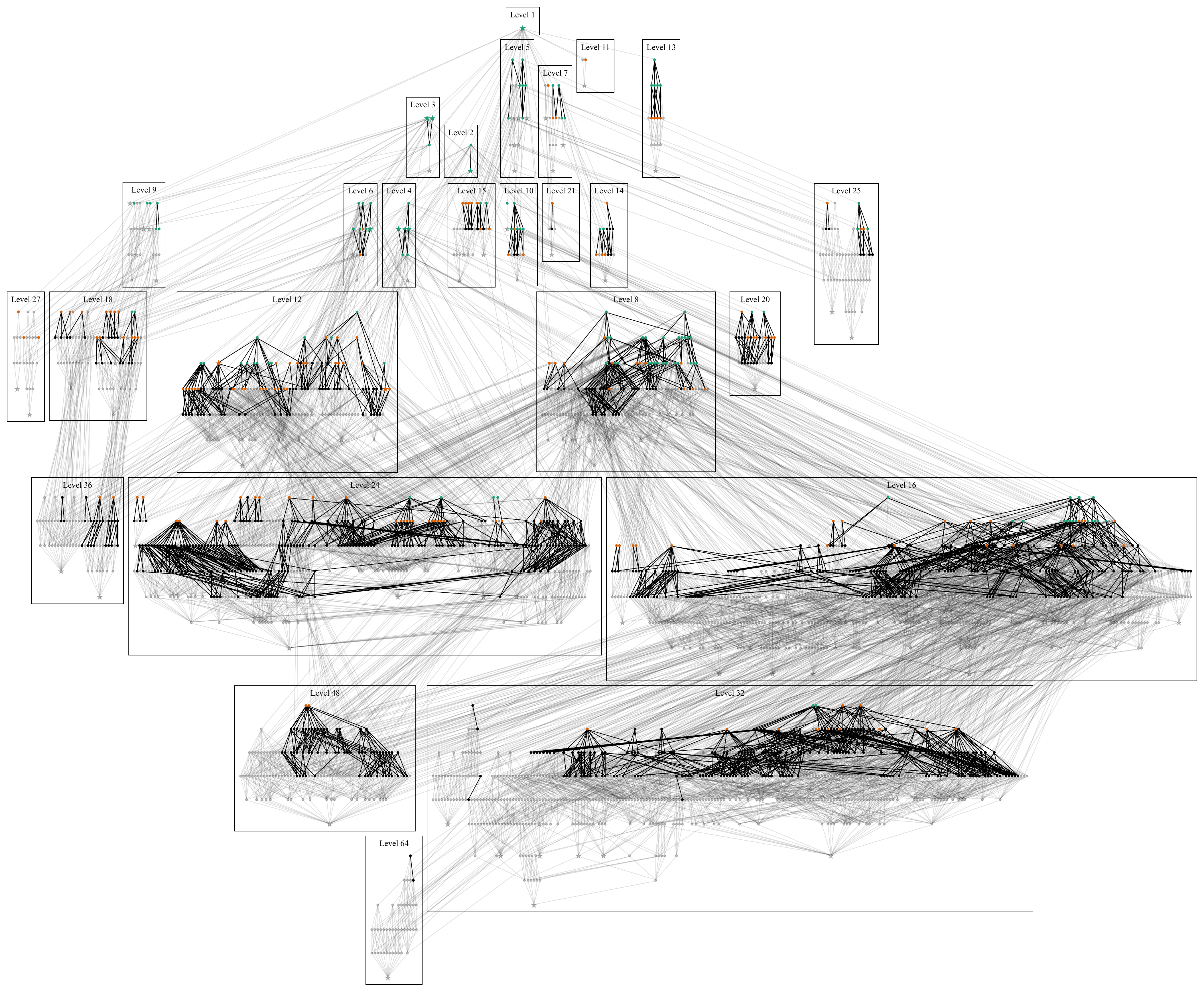}
			\end{sideways}
			\caption{The graph of the $\cB_{1}$-closed conjugacy classes of subgroups of $\GL(\Zhat)$ containing one of the conjugacy classes listed in Table \ref{tbl:finite_b_closures:infinite_sl_intersections_zywina}, ordered by inclusion.} \label{fig:finite_b_closures:finite_b_closures}
		\end{figure}
		
		We now know that the $\cB_{1}(n)$-closure of $\cG_{j}$ must be one of the 2651 conjugacy classes given in Figure \ref{fig:finite_b_closures:finite_b_closures}. Since the $\cB_{1}(n)$-closure of $\cG_{j}$ contains $\cG_{j}$ by definition, it follows from Corollary \ref{thm:preliminaries:modular_curve_rational_point} that the modular curve $X_{\clo{\cB_{1}(n)}{\cG_{j}}}$ has a rational point with $j$-invariant $j$. In particular, we must have $\det(\clo{\cB_{1}(n)}{\cG_{j}}) = \Zhat$. Computing the image of each of the 2651 aforementioned conjugacy classes under the determinant map, we find that 1573 conjugacy classes do not have full determinant. These conjugacy classes correspond to the gray vertices in Figure \ref{fig:finite_b_closures:finite_b_closures}. Thus, we know that the $\cB_{1}(n)$-closure of $\cG_{j}$ must be one of the 1078 remaining conjugacy classes.
		
		If the modular curve $X_{\clo{\cB_{1}(n)}{\cG_{j}}}$ has genus zero, then, as it has a rational point, it must be isomorphic to $\PP^{1}_{\Q}$. In particular, the modular curve $X_{\clo{\cB_{1}(n)}{\cG_{j}}}$ has infinitely many rational points. By Theorem \ref{thm:infinite_degrees:infinite_closed_subgroups}, it follows that the conjugacy class $\clo{\cB_{1}(n)}{\cG_{j}}$ is given in Table \ref{tbl:infinite_b_closures:infinite_b1_closures}, and so the first statement of Theorem \ref{thm:finite_degrees:b1_closure_infinite_agreeable} holds.
		
		Therefore, we may assume that the modular curve $X_{\clo{\cB_{1}(n)}{\cG_{j}}}$ has non-zero genus. Computing the genus of the modular curves associated to the conjugacy classes of Figure \ref{fig:finite_b_closures:finite_b_closures}, we find that 121 conjugacy classes have genus 0. These are drawn in green on Figure \ref{fig:finite_b_closures:finite_b_closures}. In particular, the $\cB_{1}(n)$-closure of $\cG_{j}$ must be one of the remaining 957 conjugacy classes.
		
		Since the modular curve $X_{\clo{\cB_{1}(n)}{\cG_{j}}}$ has a rational point with $j$-invariant $j$, it follows that $j$ must be the $j$-invariant of some non-cuspidal, non-CM rational point on the modular curve corresponding to one of the 957 conjugacy classes described above. Therefore, the result follows from the following classification of these rational points.
	\end{proof}
		
	\begin{theorem}
		Let $\cH$ be one of the 957 conjugacy classes of subgroups of $\GL(\Zhat)$ listed in Figure \ref{fig:finite_b_closures:finite_b_closures} such that $\det(\cH) = \Zhat*$ and the modular curve $X_{\cH}$ has positive genus. Let $x \in X_{\cH}$ be a non-cuspidal, non-CM rational point. Then $\cH$ and $j(x)$ are listed in Table \ref{tbl:finite_b_closures:exceptional_j_infinite_sl}.
	\end{theorem}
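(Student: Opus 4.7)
The plan is to exploit the inclusion structure of the $957$ conjugacy classes, as displayed in Figure~\ref{fig:finite_b_closures:finite_b_closures}, to reduce the problem to computing rational points on a much smaller collection of ``minimal'' modular curves. Whenever $\cH \leq \cH'$ are two conjugacy classes in the list, the associated inclusion morphism $f : X_{\cH} \to X_{\cH'}$ sends non-cuspidal, non-CM rational points to non-cuspidal, non-CM rational points. Hence, if the rational points on every $X_{\cH'}$ strictly above a given $X_{\cH}$ in the graph have already been classified, the rational points on $X_{\cH}$ can be obtained by pulling back each known non-cuspidal, non-CM $j$-invariant through the $j$-map and checking which geometric preimages are defined over $\Q$. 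Walking down the graph in this fashion reduces the computation to the $160$ modular curves mentioned in Section~\ref{sec:introduction:main_results}, corresponding to the ``bottom'' of Figure~\ref{fig:finite_b_closures:finite_b_closures}.

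For each of these $160$ minimal curves I would proceed by combining several techniques, implemented in \texttt{Magma}. First, I would consult the LMFDB, which already records rational points and defining models for many modular curves of interest. For the remaining curves, the plan divides along genus: for genus~$1$ curves, compute the Mordell--Weil group over $\Q$ directly after passing to a Weierstrass model; for curves of genus $\geq 2$ whose Jacobian has Mordell--Weil rank strictly less than the genus, apply Chabauty--Coleman together with the Mordell--Weil sieve; and for the harder cases, either factor the $j$-map through a lower-genus quotient whose rational points are already known, or apply elliptic curve Chabauty on a cover to an elliptic curve of small positive rank. Throughout, the images of cusps and of CM $j$-invariants from $\Qcm$ are systematically identified and excluded. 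Once the rational points on the $160$ minimal curves are enumerated, the inclusion data of Figure~\ref{fig:finite_b_closures:finite_b_closures} is used to propagate the classification back up through all $957$ conjugacy classes, producing Table~\ref{tbl:finite_b_closures:exceptional_j_infinite_sl}.

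The main obstacle will be those minimal modular curves whose Jacobians have Mordell--Weil rank comparable to, or exceeding, the genus, so that Chabauty--style methods are not directly applicable. For these curves, the plan is to exploit the additional structure available for modular curves, in particular Atkin--Lehner involutions and other natural automorphisms, to produce quotients of smaller genus whose rational points can be enumerated, and then to classify the rational lifts. A careful bookkeeping of this large case analysis, including verification that each computed rational point is non-cuspidal and non-CM, will form the bulk of the work of Section~\ref{sec:rational_points}.
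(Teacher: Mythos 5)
Your proposal takes essentially the same approach as the paper: the key reduction from $957$ to $160$ modular curves via the inclusion structure of Figure~\ref{fig:finite_b_closures:finite_b_closures}, followed by a case-by-case computation of rational points using a mixture of Mordell--Weil, Chabauty--Coleman, quotient maps to smaller-genus curves, and LMFDB data, is exactly what Section~\ref{sec:rational_points} does. Two small points of divergence are worth flagging. First, you refer to the $160$ curves as ``minimal'' and as lying at the ``bottom'' of the figure, but the reduction lands on the \emph{maximal} conjugacy classes among the $957$, which sit at the \emph{top} of the figure as drawn (larger subgroups appear above smaller ones); these maximal classes give the smallest-degree covers of $X(1)$ and hence the smallest-genus curves, which is precisely why the reduction is useful. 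Your described recursion (classify a curve only after all curves with strictly larger group have been classified, using the push-forward of rational points) is correct and implies the hard work happens at the top, not the bottom. Second, your list of techniques omits local solubility checks, which the paper uses to rule out rational points on roughly twenty of the $160$ curves without any genus computation; on the other hand, you suggest elliptic curve Chabauty, which the paper does not in fact need (the one genuinely hard curve, a genus $3$ curve of level $15$, is handled by a bespoke Coleman integration argument exploiting an involution, but the Jacobian rank is $1 < 3$, so it is still within Chabauty range). Finally, your extra step of pulling $j$-invariants back through the $j$-map and checking which preimages are defined over $\Q$ is not needed for the theorem as stated: the push-forward inclusion $j(X_{\cG}(\Q)) \subseteq j(X_{\cG'}(\Q))$ for $\cG \leq \cG'$ already constrains the $j$-invariants of all $957$ curves to those appearing on the $160$ maximal ones, which is all the later argument requires.
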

	
	We postpone the proof of this result to Section \ref{sec:rational_points}.
	
	\subsection{Computing degrees of rational fibers on \texorpdfstring{$X_{0}(n)$}{X\_0(n)} and \texorpdfstring{$X_{1}(n)$}{X\_1(n)}} \label{sec:finite_degrees:computing_degrees}
	
	Building on the two previous sections, we can now give a conjecturally complete classification of the degrees of the fibers with rational $j$-invariant on the modular curve $X_{0}(n)$ and $X_{1}(n)$. We split the cases of $X_{0}(n)$ and $X_{1}(n)$, as the argument proceeds slightly differently in each case.
	
	Firstly, for the modular curves $X_{0}(n)$, we obtain the following result.
	
	\begin{theorem} \label{thm:finite_degrees:x0_fiber_degrees}
		Suppose that Conjecture \ref{conj:finite_degrees:sl_intersections} holds. Fix $n \geq 1$. Then, for any conjugacy class $\cG$ of open subgroups of $\GL(\Zhat)$, there exists a non-CM rational $j$-invariant $j \in \Q \setminus \Qcm$ such that $\clo{\cB_{0}(n)}{\cG_{j}} = \cG$ if and only if $\cG$ is listed in Table \ref{tbl:infinite_b_closures:infinite_b0_closures} or \ref{tbl:finite_b_closures:finite_b0_closures}, and the level of $\cG$ divides $n$. In particular, we have that
		\begin{align*}
			&\{\ddeg_{X_{0}(n)}(j) : j \in \Q \setminus \Qcm\} \\
			& \quad = \left\{\ldblbrace [B_{0}(m) : B_{0}(n)] |\Omega| : \Omega \in B_{0}(m) \backslash {\GL(\Zhat)} / \cG \rdblbrace : \begin{array}{l}\cG \text{ in Table \ref{tbl:infinite_b_closures:infinite_b0_closures} or \ref{tbl:finite_b_closures:finite_b0_closures}} \\ \text{of level } m \mid n\end{array}\right\}.
		\end{align*}
	\end{theorem}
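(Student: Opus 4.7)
The plan is to mirror the structure of the proof of Theorem \ref{thm:infinite_degrees:infinite_fiber_degrees}, but split according to the dichotomy in Conjecture \ref{conj:finite_degrees:sl_intersections}. First I would argue the ``only if'' direction. Let $j \in \Q \setminus \Qcm$ be a non-CM rational $j$-invariant. Apply Conjecture \ref{conj:finite_degrees:sl_intersections}. If $X_{(\cG_{j})_{\agr}}$ has infinitely many rational points, then Corollary \ref{thm:finite_degrees:b0_closure_infinite_agreeable} directly places $\clo{\cB_{0}(n)}{\cG_{j}}$ in Table \ref{tbl:infinite_b_closures:infinite_b0_closures}, and Lemma \ref{thm:closures:open_closures_preserve_level} forces its level to divide $n$. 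Otherwise, $j$ and $\cG_{j}$ appear in Table \ref{tbl:finite_b_closures:exceptional_j_invariants}; here I would compute the $\cB_{0}(n)$-closure of each such $\cG_{j}$ explicitly using Remark \ref{rmk:closures:explicit_description} together with the correspondence (\ref{eq:preliminaries:finite_group_action_correspondence}), and collect the outcomes into Table \ref{tbl:finite_b_closures:finite_b0_closures}. By Lemma \ref{thm:closures:family_closures}, $\clo{\cB_{0}(n)}{\cG_{j}} = \clo{\cB_{0}(m)}{\cG_{j}}$ whenever $m$ is divisible by the common level, so this finite computation really does cover all $n$ at once.

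Next I would handle the ``if'' direction. For $\cG$ listed in Table \ref{tbl:infinite_b_closures:infinite_b0_closures} of level dividing $n$, Theorem \ref{thm:infinite_degrees:infinite_fiber_degrees} already produces infinitely many $j \in \Q$ with $\clo{\cB_{0}(n)}{\cG_{j}} = \cG$, and at most finitely many of those lie in $\Qcm$, so a non-CM witness exists. For $\cG$ listed in Table \ref{tbl:finite_b_closures:finite_b0_closures}, the construction of that table via the finite-agreeable case guarantees a corresponding exceptional $j$-invariant, which by definition is not in $\Qcm$.

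Finally, for the displayed equality of sets of multisets, I would argue exactly as at the end of the proof of Theorem \ref{thm:infinite_degrees:infinite_fiber_degrees}. Given $\cG$ of level $m$ dividing $n$, the kernel $\ker \pi_{m}$ is a normal subgroup of $\GL(\Zhat)$ contained in $\cG$, so Lemma \ref{thm:closures:normal_product_equivalent} shows $B_{0}(n)$ and $B_{0}(n) \ker \pi_{m} = B_{0}(m)$ are $\cG$-equivalent. Then Lemma \ref{thm:closures:equivalent_degree_formula} rescales the orbit sizes by the index $[B_{0}(m) : B_{0}(n)]$, converting the fiber-degree formula of Corollary \ref{thm:closures:modular_curve_point_degrees} from $B_{0}(n)$-cosets to $B_{0}(m)$-cosets.

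The main obstacle is the explicit finite computation yielding Table \ref{tbl:finite_b_closures:finite_b0_closures}: one must, for each entry of Table \ref{tbl:finite_b_closures:exceptional_j_invariants}, lift $\cG_{j}$ to a subgroup of $\GL(\Z[n])$ (where $n$ is its level), enumerate the orbits of $B_{0}(n) \backslash \GL(\Z[n])$ under right multiplication, and take the preimage under $\varphi$ from Remark \ref{rmk:closures:explicit_description} to produce the $\cB_{0}(n)$-closure. This is a mechanical but sizeable \texttt{Magma} computation, directly analogous to the one carried out for Theorem \ref{thm:infinite_degrees:infinite_closed_subgroups}, and I would delegate it to a script in the GitHub repository in the same spirit as \texttt{infinite\_b0\_closed.m}.
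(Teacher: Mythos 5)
Your overall structure matches the paper's: split by Conjecture~\ref{conj:finite_degrees:sl_intersections}, reduce to a finite \texttt{Magma} computation for the exceptional $j$-invariants, and transfer the displayed equality via Lemma~\ref{thm:closures:normal_product_equivalent} and Lemma~\ref{thm:closures:equivalent_degree_formula} exactly as in Theorem~\ref{thm:infinite_degrees:infinite_fiber_degrees}. The ``only if'' direction, the reduction of the level, and the final rescaling are all fine.

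The gap is in your ``if'' direction for $\cG$ in Table~\ref{tbl:finite_b_closures:finite_b0_closures}. You assert that ``the construction of that table \dots guarantees a corresponding exceptional $j$-invariant,'' but what the construction actually gives you is a $j$ with $\clo{\cB_{0}(m)}{\cG_{j}} = \cG$, where $m$ is the level of $\cG$. You need $\clo{\cB_{0}(n)}{\cG_{j}} = \cG$ for an arbitrary $n$ with $m \mid n$, and this does not follow automatically: Lemma~\ref{thm:closures:inclusion_preserving_in_H} gives only $\clo{\cB_{0}(n)}{\cG_{j}} \leq \clo{\cB_{0}(m)}{\cG_{j}} = \cG$, and $\cB_{0}(m)$-equivalence does not imply $\cB_{0}(n)$-equivalence in that direction, so a priori $\clo{\cB_{0}(n)}{\cG_{j}}$ could be a proper subgroup of $\cG$. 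Your remark that ``this finite computation really does cover all $n$ at once'' shows $\clo{\cB_{0}(n)}{\cG_{j}} = \clo{\cB_{0}(\gcd(n,k))}{\cG_{j}}$ where $k$ is the level of $\cG_{j}$, but $\gcd(n,k)$ need not equal $m$, so this doesn't close the loop either. The paper supplies the missing step: it first notes $\clo{\cB_{0}(n)}{\cG_{j}} \leq \cG$ and that $X_{\cG}$ has finitely many rational points (since $\cG$ is not in Table~\ref{tbl:infinite_b_closures:infinite_b0_closures}), so $X_{\clo{\cB_{0}(n)}{\cG_{j}}}$ also has finitely many rational points and $\clo{\cB_{0}(n)}{\cG_{j}}$ must itself appear in Table~\ref{tbl:finite_b_closures:finite_b0_closures} by the ``only if'' direction already established; then it invokes the empirical fact (checked from the table) that each $j$-invariant appears in only one row of Table~\ref{tbl:finite_b_closures:finite_b0_closures}, which forces $\clo{\cB_{0}(n)}{\cG_{j}} = \cG$. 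Without this uniqueness-of-$j$-invariants observation, the ``if'' direction is incomplete.
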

	
	\begin{proof}
		Let $j \in \Q$ be a non-CM rational $j$-invariant. If the modular curve $X_{(\cG_{j})_{\agr}}$ has infinitely many points, then, by Corollary \ref{thm:finite_degrees:b0_closure_infinite_agreeable}, the conjugacy class $\clo{\cB_{0}(n)}{\cG_{j}}$ is given in Table \ref{tbl:infinite_b_closures:infinite_b0_closures}.
		
		On the other hand, if the modular curve $X_{(\cG_{j})_{\agr}}$ has finitely many points, then, by Conjecture \ref{conj:finite_degrees:sl_intersections}, the conjugacy class $\cG_{j}$ is given in Table \ref{tbl:finite_b_closures:exceptional_j_invariants}. Note that, since $j$ is non-CM, $\cG_{j}$ is a conjugacy class of open subgroups of $\GL(\Zhat)$. Let $k$ be the level of $\cG_{j}$. Since $\cG_{j} \leq \clo{\cB_{0}(n)}{\cG_{j}}$, the closure $\clo{\cB_{0}(n)}{\cG_{j}}$ has level dividing $k$. In particular, by Lemma \ref{thm:closures:family_closures}, the conjugacy class $\clo{\cB_{0}(n)}{\cG_{j}}$ is equal to the $\cB_{0}((n, k))$-closure of $\cG_{j}$.
		
		We compute the $\cB_{0}(l)$-closure of $\cG_{j}$ for all $j$ given in Table \ref{tbl:finite_b_closures:exceptional_j_invariants} and all $l \geq 1$ dividing the level of $\cG_{j}$ in the file \texttt{finite\_b0\_closures}. The closures which are not given in Table \ref{tbl:infinite_b_closures:infinite_b0_closures} are listed in Table \ref{tbl:finite_b_closures:finite_b0_closures}.
		
		Therefore, we know that the group $\clo{\cB_{0}(n)}{\cG_{j}}$ is given in one of Tables \ref{tbl:infinite_b_closures:infinite_b0_closures} or \ref{tbl:finite_b_closures:finite_b0_closures}. Note that, by Lemma \ref{thm:closures:open_closures_preserve_level}, the level of $\clo{\cB_{0}(n)}{\cG_{j}}$ divides $n$. 
		
		For the opposite direction, let $\cG$ be a conjugacy class of open subgroups of $\GL(\Zhat)$ appearing in Table \ref{tbl:infinite_b_closures:infinite_b0_closures} or \ref{tbl:finite_b_closures:finite_b0_closures}, such that the level $m$ of $\cG$ divides $n$. If $\cG$ is listed in Table \ref{tbl:infinite_b_closures:infinite_b0_closures}, we know by Theorem \ref{thm:infinite_degrees:infinite_fiber_degrees} that there exist infinitely many rational $j$-invariants $j \in \Q$ such that $\clo{\cB_{0}(n)}{\cG_{j}} = \cG$. In particular, there exists a non-CM such $j$-invariant.
		
		On the other hand, if $\cG$ is listed in Table \ref{tbl:finite_b_closures:finite_b0_closures}, then by construction, there exists a non-CM $j$-invariant $j \in \Q$ such that $\clo{\cB_{0}(m)}{\cG_{j}} = \cG$. Since $m$ divides $n$, we have that $\cB_{0}(n) \leq \cB_{0}(m)$, and so, by Lemma \ref{thm:closures:inclusion_preserving_in_H}, we have
		\[
			\clo{\cB_{0}(n)}{\cG_{j}} \leq \clo{\cB_{0}(m)}{\cG_{j}} = \cG.
		\]
		By construction, the conjugacy class $\cG$ is not listed in Table \ref{tbl:infinite_b_closures:infinite_b0_closures}. Therefore, by Theorem \ref{thm:infinite_degrees:infinite_closed_subgroups}, the modular curve $X_{\cG}$ has finitely many rational points. By the above inclusion, we obtain an inclusion morphism $X_{\clo{\cB_{0}(n)}{\cG_{j}}} \to X_{\cG}$. Thus, the former modular curve also has finitely many rational points, and $\clo{\cB_{0}(n)}{\cG_{j}}$ is not listed in Theorem \ref{tbl:infinite_b_closures:infinite_b0_closures}. By construction, it follows that $\clo{\cB_{0}(n)}{\cG_{j}}$ is also given in Table \ref{tbl:finite_b_closures:finite_b0_closures}. However, it is straightforward to check that each $j$-invariant occurs only once in Table \ref{tbl:finite_b_closures:finite_b0_closures}. Thus, we must have that $\clo{\cB_{0}(n)}{\cG_{j}} = \cG$. This concludes the proof of the first assertion.
		
		For the second part, note that, by Theorem \ref{thm:closures:modular_curve_point_degrees},
		\begin{align*}
			&\{\ddeg_{X_{0}(n)}(j) : j \in \Q \setminus \Qcm\} \\
			& \quad = \{\ldblbrace \left| \Omega \right| : \Omega \in B_{0}(n) \backslash {\GL(\Zhat)} / \clo{\cB_{0}(n)}{\cG_{j}} \rdblbrace : j \in \Q \setminus \Qcm\}.
		\end{align*}
		Hence, by the above, we have
		\begin{align*}
			&\{\ddeg_{X_{0}(n)}(j) : j \in \Q \setminus \Qcm\} \\
			& \quad = \{\ldblbrace \left| \Omega \right| : \Omega \in B_{0}(n) \backslash {\GL(\Zhat)} / \cG \rdblbrace : \cG \text{ in Table \ref{tbl:infinite_b_closures:infinite_b0_closures} or \ref{tbl:finite_b_closures:finite_b0_closures} of level } m \mid n\}.
		\end{align*}
		The remainder of the proof now proceeds as for Theorem \ref{thm:infinite_degrees:infinite_fiber_degrees}.
	\end{proof}
	
	We obtain a similar statement for the modular curves $X_{1}(n)$.
	
	\begin{theorem} \label{thm:finite_degrees:x1_fiber_degrees}
		Suppose that Conjecture \ref{conj:finite_degrees:sl_intersections} holds. Fix $n \geq 1$. Then, for any conjugacy class $\cG$ of open subgroups of $\GL(\Zhat)$, there exists a non-CM rational $j$-invariant $j \in \Q \setminus \Qcm$ such that $\clo{\cB_{1}(n)}{\cG_{j}} = \cG$ if and only if $\cG$ is listed in Table \ref{tbl:infinite_b_closures:infinite_b1_closures} or \ref{tbl:finite_b_closures:finite_b1_closures}, and the level of $\cG$ divides $n$. In particular, we have
		\begin{align*}
			&\{\ddeg_{X_{1}(n)}(j) : j \in \Q \setminus \Qcm\} \\
			& \quad = \left\{\ldblbrace [B_{1}(m) : B_{1}(n)] |\Omega| : \Omega \in B_{1}(m) \backslash {\GL(\Zhat)} / \cG \rdblbrace : \begin{array}{l}\cG \text{ in Table \ref{tbl:infinite_b_closures:infinite_b1_closures} or \ref{tbl:finite_b_closures:finite_b1_closures}} \\ \text{of level } m \mid n\end{array}\right\}.
		\end{align*}
	\end{theorem}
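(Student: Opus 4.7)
The plan is to mirror the structure of the proof of Theorem \ref{thm:finite_degrees:x0_fiber_degrees}, but to replace the use of Corollary \ref{thm:finite_degrees:b0_closure_infinite_agreeable} with Theorem \ref{thm:finite_degrees:b1_closure_infinite_agreeable}, which is the substitute we need because $\cB_{1}$ does not contain the scalar matrices and hence the $\cB_{1}$-closure need not contain the agreeable closure. First, let $j \in \Q \setminus \Qcm$ and split into the two cases dictated by Conjecture \ref{conj:finite_degrees:sl_intersections}. If $X_{(\cG_{j})_{\agr}}$ has infinitely many rational points, then the intersection $\cG_{j} \cap \SL(\Zhat)$ is one of the conjugacy classes in Table \ref{tbl:finite_b_closures:infinite_sl_intersections_zywina}, and Theorem \ref{thm:finite_degrees:b1_closure_infinite_agreeable} gives that either $\clo{\cB_{1}(n)}{\cG_{j}}$ is listed in Table \ref{tbl:infinite_b_closures:infinite_b1_closures}, or $\cG_{j}$ itself is one of the finitely many conjugacy classes appearing in the unshaded rows of Table \ref{tbl:finite_b_closures:exceptional_j_infinite_sl}. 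In the second (finite-fiber) case, Conjecture \ref{conj:finite_degrees:sl_intersections} likewise forces $\cG_{j}$ into the finite list of Table \ref{tbl:finite_b_closures:exceptional_j_invariants}.

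In both of the remaining situations we have a finite, explicit list of conjugacy classes $\cG_{j}$ together with representative $j$-invariants, and we must compute $\clo{\cB_{1}(n)}{\cG_{j}}$ for every $n$. The key reduction is Lemma \ref{thm:closures:family_closures}: since the level of $\clo{\cB_{1}(n)}{\cG_{j}}$ divides the level $k$ of $\cG_{j}$, the closure $\clo{\cB_{1}(n)}{\cG_{j}}$ agrees with $\clo{\cB_{1}((n,k))}{\cG_{j}}$, so only the finitely many divisors $l \mid k$ need to be examined. I would carry this out in \texttt{Magma} exactly as in the $X_{0}$ case (the $X_{1}$-analogue of \texttt{finite\_b0\_closures}), collecting all resulting closures that are not already in Table \ref{tbl:infinite_b_closures:infinite_b1_closures} to form Table \ref{tbl:finite_b_closures:finite_b1_closures}. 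Combined with Lemma \ref{thm:closures:open_closures_preserve_level}, this establishes the forward direction.

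For the reverse direction, fix a conjugacy class $\cG$ appearing in Table \ref{tbl:infinite_b_closures:infinite_b1_closures} or \ref{tbl:finite_b_closures:finite_b1_closures} whose level $m$ divides $n$. If $\cG$ is in Table \ref{tbl:infinite_b_closures:infinite_b1_closures}, Theorem \ref{thm:infinite_degrees:infinite_fiber_degrees} immediately supplies infinitely many non-CM $j \in \Q$ with $\clo{\cB_{1}(n)}{\cG_{j}} = \cG$. If instead $\cG$ is in Table \ref{tbl:finite_b_closures:finite_b1_closures}, construction provides a non-CM $j$ with $\clo{\cB_{1}(m)}{\cG_{j}} = \cG$, and because $\cB_{1}(n) \leq \cB_{1}(m)$, Lemma \ref{thm:closures:inclusion_preserving_in_H} gives $\clo{\cB_{1}(n)}{\cG_{j}} \leq \cG$. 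Since $\cG$ is not in Table \ref{tbl:infinite_b_closures:infinite_b1_closures}, the modular curve $X_{\cG}$ has finitely many rational points, and the same holds for any intermediate curve $X_{\clo{\cB_{1}(n)}{\cG_{j}}}$, so $\clo{\cB_{1}(n)}{\cG_{j}}$ must itself be listed in Table \ref{tbl:finite_b_closures:finite_b1_closures}. The equality $\clo{\cB_{1}(n)}{\cG_{j}} = \cG$ then follows provided each $j$-invariant occurs only once in that table, which I would verify directly as part of the computation.

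Finally, the formula for $\{\ddeg_{X_{1}(n)}(j) : j \in \Q \setminus \Qcm\}$ is obtained by feeding the classification into Corollary \ref{thm:closures:modular_curve_point_degrees} and then applying Lemma \ref{thm:closures:equivalent_degree_formula} to rewrite orbit sizes at level $n$ as $[B_{1}(m):B_{1}(n)]$ times orbit sizes at level $m$, exactly as at the end of the proof of Theorem \ref{thm:infinite_degrees:infinite_fiber_degrees}. The main obstacle is not the structural argument, which is a routine adaptation of the $X_{0}$ case, but rather the input to Theorem \ref{thm:finite_degrees:b1_closure_infinite_agreeable}: enumerating the 1078 candidate $\cB_{1}$-closed overgroups with full determinant and positive genus, and then verifying that each of the 957 positive-genus modular curves involved has only the claimed non-cuspidal, non-CM rational points. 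That rational-points computation, carried out in Section \ref{sec:rational_points}, is where essentially all of the real difficulty lies.
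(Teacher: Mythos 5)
Your proposal is correct and follows essentially the same route as the paper: split $j$-invariants according to Conjecture \ref{conj:finite_degrees:sl_intersections}, reduce via Theorem \ref{thm:finite_degrees:b1_closure_infinite_agreeable} to the finite exceptional lists (Tables \ref{tbl:finite_b_closures:exceptional_j_invariants} and \ref{tbl:finite_b_closures:exceptional_j_infinite_sl}), compute $\cB_{1}(n)$-closures by reducing to divisors of the level via Lemma \ref{thm:closures:family_closures}, handle the reverse inclusion by uniqueness of $j$-invariants in Table \ref{tbl:finite_b_closures:finite_b1_closures}, and derive the degree formula via Lemma \ref{thm:closures:equivalent_degree_formula} as in Theorem \ref{thm:infinite_degrees:infinite_fiber_degrees}. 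The only cosmetic deviation is that you case-split on whether $X_{(\cG_{j})_{\agr}}$ has infinitely many rational points, whereas the paper splits on whether $\cG_{j} \cap \SL(\Zhat)$ appears in Table \ref{tbl:finite_b_closures:infinite_sl_intersections_zywina}; these are equivalent under Conjecture \ref{conj:finite_degrees:sl_intersections} and lead to the same finite computation.
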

	
	\begin{proof}
		Let $j \in \Q$ be a non-CM rational $j$-invariant. Suppose first that the intersection $\cG_{j} \cap \SL(\Zhat)$ is one of the conjugacy classes given in Table \ref{tbl:finite_b_closures:infinite_sl_intersections_zywina}. By Theorem \ref{thm:finite_degrees:b1_closure_infinite_agreeable}, either the $\cB_{1}(n)$-closure of $\cG_{j}$ is listed in Table \ref{tbl:infinite_b_closures:infinite_b1_closures}, or the conjugacy class $\cG_{j}$ is given in one of the unshaded rows of Table \ref{tbl:finite_b_closures:exceptional_j_infinite_sl}.
		
		On the other hand, suppose that the intersection $\cG_{j} \cap \SL(\Zhat)$ is not given in Table \ref{tbl:finite_b_closures:infinite_sl_intersections_zywina}. By Conjecture \ref{conj:finite_degrees:sl_intersections}, the conjugacy class $\cG_{j}$ is given in one of the unshaded rows of Table \ref{tbl:finite_b_closures:exceptional_j_invariants}.
		
		Therefore, we have shown that either the $\cB_{1}(n)$-closure of $\cG_{j}$ is given in Table \ref{tbl:infinite_b_closures:infinite_b1_closures}, or $\cG_{j}$ is given in one of the unshaded rows of Tables \ref{tbl:finite_b_closures:exceptional_j_invariants} or \ref{tbl:finite_b_closures:exceptional_j_infinite_sl}. As in the proof of Theorem \ref{thm:finite_degrees:x0_fiber_degrees}, we compute the $\cB_{1}(n)$-closure of all the latter subgroups $\cG_{j}$. This is done in the file \texttt{finite\_b1\_closures}, and the resulting closures not given in Table \ref{tbl:infinite_b_closures:infinite_b1_closures} are listed in Table \ref{tbl:finite_b_closures:finite_b1_closures}.
		
		Therefore, we know that the group $\clo{\cB_{1}(n)}{\cG_{j}}$ is given in one of Tables \ref{tbl:infinite_b_closures:infinite_b1_closures} or \ref{tbl:finite_b_closures:finite_b1_closures}. Note that, by Lemma \ref{thm:closures:open_closures_preserve_level}, the level of $\clo{\cB_{1}(n)}{\cG_{j}}$ divides $n$.
		
		The proof of the remainder of the first assertion, and of the second assertion, are now analogous to the proof of Theorem \ref{thm:finite_degrees:x0_fiber_degrees}, noting once again that each $j$-invariant appears only once in Table \ref{tbl:finite_b_closures:finite_b1_closures}.
	\end{proof}
	
	As was the case for Theorem \ref{thm:infinite_degrees:infinite_fiber_degrees}, Theorems \ref{thm:finite_degrees:x0_fiber_degrees} and \ref{thm:finite_degrees:x1_fiber_degrees} provide explicit procedures for computing the set of degrees of rational fibers on the modular curves $X_{0}(n)$ and $X_{1}(n)$ respectively. As before, these are implemented in the functions \texttt{PossibleX0Fibers} and \texttt{PossibleX1Fibers} defined in the file \texttt{degrees\_x0\_x1.m}.
	
	In addition, it is straightforward to deduce the degrees of the points on $X_{0}(n)$ and $X_{1}(n)$ with rational $j$-invariant from the degrees of the fibers given in Theorem \ref{thm:infinite_degrees:infinite_fiber_degrees}. This yields the statements of Theorems \ref{thm:introduction:finite_x0_degrees} and \ref{thm:introduction:finite_x1_degrees}.
	
	\section{Determining rational points on modular curves} \label{sec:rational_points}
	
	In this section, we tackle the last missing piece in the proof of Theorem \ref{thm:finite_degrees:b1_closure_infinite_agreeable}. Namely, we prove the following.
	
	\begin{theorem} \label{thm:rational_points:main_theorem}
		Let $\cH$ be one of the 957 conjugacy classes of subgroups of $\GL(\Zhat)$ listed in Figure \ref{fig:finite_b_closures:finite_b_closures} such that $\det(\cH) = \Zhat*$ and the modular curve $X_{\cH}$ has positive genus. Let $x \in X_{\cH}$ be a non-cuspidal, non-CM rational point. Then $\cH$ and $j(x)$ are listed in Table \ref{tbl:finite_b_closures:exceptional_j_infinite_sl}.
	\end{theorem}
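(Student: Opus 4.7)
The plan is to compute $X_\cH(\Q)$ for each of the 957 curves in turn, processing the conjugacy classes from largest to smallest with respect to the inclusion partial order depicted in Figure \ref{fig:finite_b_closures:finite_b_closures} so as to reduce most of the work to a small set of ``top'' curves. By Faltings's theorem each $X_\cH(\Q)$ is finite, so in principle the task is simply to produce the finite list and check completeness.

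The downward propagation step is the engine of the argument. Given $\cH \leq \cH'$ in the graph, with an already-computed list of non-cuspidal, non-CM rational $j$-invariants on $X_{\cH'}$, the inclusion morphism $f : X_\cH \to X_{\cH'}$ forces $j(x) \in j(X_{\cH'}(\Q))$ for every $x \in X_\cH(\Q)$. For each such candidate $j$, Corollary \ref{thm:preliminaries:modular_curve_rational_point} reduces the question of whether $j \in j(X_\cH(\Q))$ to checking whether some $G \in \cG_j$ is contained in $\pm \cH$, which is a finite group-theoretic verification once $\cG_j$ is known. Because the extended Galois images $\cG_j$ attached to the exceptional rational $j$-invariants in Zywina's classification are explicit, this step can be automated in \texttt{Magma} and determines $X_\cH(\Q)$ for every $\cH$ strictly below another element of the graph.

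It therefore remains to handle the maximal elements of the subposet of positive-genus, full-determinant, $\cB_1$-closed subgroups in Figure \ref{fig:finite_b_closures:finite_b_closures}. For each such ``top'' curve I would proceed in order of increasing difficulty: first consult the LMFDB, which records rational points on a very large swath of modular curves; next invoke the existing literature on Mazur's Program B, most notably \cite{rouse2022ell-adic, zywina2024explicit, zywina2025open}, which already resolves many of these cases; and finally, for any residual handful, apply the standard toolkit of Chabauty-Coleman (when the Mordell-Weil rank of the Jacobian is strictly less than the genus), elliptic Chabauty on low-degree covers, and the Mordell-Weil sieve.

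The main obstacle should be the small subset of top curves for which neither the LMFDB nor the cited literature supplies the rational points and for which classical Chabauty-Coleman fails because the rank of the Jacobian meets or exceeds its dimension. For these I expect to exhibit an auxiliary map $X_\cH \to C$ to a curve $C$ of smaller genus whose Jacobian has a low-rank factor, or to pass to an elliptic quotient of known Mordell-Weil group and apply elliptic Chabauty over a small number field; such arguments are by now routine in principle but demand case-by-case attention. Once every top curve has been resolved in this way, propagating the resulting lists downward through the graph via the group-theoretic test above produces precisely the collection of pairs $(\cH, j(x))$ recorded in Table \ref{tbl:finite_b_closures:exceptional_j_infinite_sl}.
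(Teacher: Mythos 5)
Your plan is essentially the paper's proof: the key reduction is identical (only the maximal conjugacy classes among the 957 need rational-point computations, since $j(X_{\cH}(\Q)) \subseteq j(X_{\cH'}(\Q))$ for $\cH \leq \cH'$, and the paper arrives at the same set of 160 maximal curves), and the toolkit you list — LMFDB lookups, prior results of Zywina and Rouse--Sutherland--Zureick-Brown, Chabauty--Coleman, and covers to lower-genus or elliptic quotients — is the one the paper deploys. Two small remarks: you omit local solubility, which the paper uses as the cheapest technique to dispose of about twenty curves outright; and your prediction that the residual hard cases would have Jacobian rank at least the genus is off — the one curve requiring a hand-written Chabauty argument (\href{https://beta.lmfdb.org/ModularCurve/Q/15.90.3.c.1}{15.90.3.c.1}) has genus 3 and rank 1, so Chabauty--Coleman applies directly, and the manual work is needed only because \texttt{Magma} lacks an automated genus 3 Chabauty intrinsic.
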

	
	While the reader may have bristled at the thought of computing the rational points on 957 different, potentially high genus, modular curves, we make a simplification which greatly reduces the size of this problem. As evidenced in Figure \ref{fig:finite_b_closures:finite_b_closures}, many of the 957 aforementioned conjugacy classes are not maximal; that is to say, for such a conjugacy class $\cG$, there exists a different conjugacy class $\cG'$, in this same list of 957 classes, such that $\cG \leq \cG'$. As there is an associated inclusion morphism $X_{\cG} \to X_{\cG'}$ which commutes with the $j$-map, it follows that
	\[
		j(X_{\cG}(\Q)) \subseteq j(X_{\cG'}(\Q)).
	\]
	In particular, it suffices to compute the rational points on the modular curves corresponding to the maximal classes amongst the 957 conjugacy classes. There are 160 such maximal conjugacy classes, which are drawn in orange in Figure \ref{fig:finite_b_closures:finite_b_closures} and are listed in Table \ref{tbl:finite_b_closures:maximal_finite_b1_closed}. We have now reduced the proof of Theorem \ref{thm:rational_points:main_theorem} to the computation of rational points on 160 different modular curves.
	
	Rather than go through each modular curve individually, we detail all of the techniques used in this computation, most of which are straightforward to apply using \texttt{Magma} and the data in the LMFDB. Table \ref{tbl:finite_b_closures:maximal_finite_b1_closed} then summarizes the technique used for each curve.
	
	We note that, while finding rational points on curves is often a difficult task, most of the modular curves in Table \ref{tbl:finite_b_closures:maximal_finite_b1_closed} can be tackled using elementary and straightforward methods. This is explained by the fact that these modular curves are maximal $\cB_{1}$-closed of non-zero genus, and so often admit a non-trivial modular cover to a modular curve which is maximal of non-zero genus, but not necessarily $\cB_{1}$-closed. This cover commutes with the $j$-map, and so it suffices to prove the desired result for the covered curve. This latter curve often has very small genus and is straightforward to tackle using existing computational techniques.
	
	\subsection{Prior work} \label{sec:rational_points:prior_work}
	
	A number of the modular curves in Table \ref{tbl:finite_b_closures:maximal_finite_b1_closed} have been handled in previous work.
	
	In \cite{zywina2015possible}, Zywina shows that almost all of the rational points on the modular curves of Table \ref{tbl:finite_b_closures:maximal_finite_b1_closed} with prime level are either cuspidal or CM points. The only exception is the unique rational point on the modular curve \href{https://beta.lmfdb.org/ModularCurve/Q/7.56.1.b.1}{7.56.1.b.1} with $j$-invariant $\frac{2268945}{128}$.
	
	The modular curves of prime power level $2^n$ were considered by Rouse and Zureick-Brown in \cite{rouse2015elliptic}. Their work shows that the rational points on the modular curves of Table \ref{tbl:finite_b_closures:maximal_finite_b1_closed} with level $2^n$ are all cuspidal or CM points, apart from four rational points on the modular curve \href{https://beta.lmfdb.org/ModularCurve/Q/16.96.3.fa.1}{16.96.3.fa.1} with $j$-invariant $\frac{4097^{3}}{2^{4}}$ and four rational points on the modular curve \href{https://beta.lmfdb.org/ModularCurve/Q/16.96.3.fa.2}{16.96.3.fa.2} with $j$-invariant $\frac{16974593}{256}$.
	
	The modular curves of prime power level $\ell^n$, for $\ell \in \{3, 5, 7, 11\}$, were considered by Rouse, Sutherland and Zureick-Brown in \cite{rouse2022ell-adic}. In our case, this work shows that the rational points on the modular curves of level 25 and 27 in Table \ref{tbl:finite_b_closures:maximal_finite_b1_closed} are all cuspidal.
	
	\subsection{Local solubility} \label{sec:rational_points:local_solubility}
	
	Given a modular curve $X_{H}$, for some open subgroup $H \leq \GL(\Zhat)$, it may be that there is a local obstruction to the existence of rational points on $X_{H}$; that is to say, the sets $X_{H}(\RR)$ or $X_{H}(\Q_{p})$, for some prime $p$, may be empty. This can be checked in a few different ways.
	
	The existence of real or $\Q_{p}$-points on the modular curve $X_{H}$, with $p$ not dividing the level of $H$, can be determined directly from the subgroup $H$, without computing a model for the curve $X_{H}$. For the case of real points, this is determined in \cite[Proposition 3.5]{zywina2022possible}, while the case of $\Q_{p}$-points is covered by \cite[Section 5.1]{rouse2022ell-adic}. The result of these computations is stored in the LMFDB. In particular, this shows that the modular curves \href{https://beta.lmfdb.org/ModularCurve/Q/12.64.1.a.1}{12.64.1.a.1}, \href{https://beta.lmfdb.org/ModularCurve/Q/12.64.1.a.2}{12.64.1.a.2} and \href{https://beta.lmfdb.org/ModularCurve/Q/14.144.4.d.1}{14.144.4.d.1} have no rational points.
	
	To check the existence of $\Q_{p}$-points on the modular curve $X_{H}$, where $p$ divides the level of $H$, we make use of the model for $X_{H}$ given in the LMFDB. Given such a model, we may use the \texttt{Magma} intrinsic \texttt{IsLocallySolvable} to test whether the set $X_{H}(\Q_{p})$ is empty, for all primes $p$ dividing the level of $H$. This method shows that the modular curves \href{https://beta.lmfdb.org/ModularCurve/Q/12.96.1.d.1}{12.96.1.d.1}, \href{https://beta.lmfdb.org/ModularCurve/Q/12.96.1.f.2}{12.96.1.f.2}, \href{https://beta.lmfdb.org/ModularCurve/Q/12.72.4.t.1}{12.72.4.t.1}, \href{https://beta.lmfdb.org/ModularCurve/Q/14.144.4.c.1}{14.144.4.c.1}, \href{https://beta.lmfdb.org/ModularCurve/Q/14.144.4.c.2}{14.144.4.c.2}, \href{https://beta.lmfdb.org/ModularCurve/Q/15.60.3.f.1}{15.60.3.f.1}, \href{https://beta.lmfdb.org/ModularCurve/Q/15.72.3.i.1}{15.72.3.i.1}, \href{https://beta.lmfdb.org/ModularCurve/Q/15.72.3.i.2}{15.72.3.i.2}, \href{https://beta.lmfdb.org/ModularCurve/Q/15.90.4.f.1}{15.90.4.f.1}, \href{https://beta.lmfdb.org/ModularCurve/Q/18.108.4.b.1}{18.108.4.b.1}, \href{https://beta.lmfdb.org/ModularCurve/Q/21.126.4.a.1}{21.126.4.a.1}, \href{https://beta.lmfdb.org/ModularCurve/Q/24.96.1.dk.1}{24.96.1.dk.1}, \href{https://beta.lmfdb.org/ModularCurve/Q/24.96.1.dk.2}{24.96.1.dk.2}, \href{https://beta.lmfdb.org/ModularCurve/Q/24.96.3.iu.1}{24.96.3.iu.1}, \href{https://beta.lmfdb.org/ModularCurve/Q/24.72.4.jc.1}{24.72.4.jc.1} and \href{https://beta.lmfdb.org/ModularCurve/Q/24.96.5.jb.1}{24.96.5.jb.1} have no rational points.
	
	\subsection{Rank 0 elliptic curves} \label{sec:rational_points:rank_0_ec}
	
	Suppose that the modular curve $X_{H}$ is a rank 0 elliptic curve. Using a model for the curve given in the LMFDB, we can compute the finite set $X_{H}(\Q)$ using the \texttt{MordellWeilGroup} intrinsic in \texttt{Magma}. Alternatively, the structure and generators of the Mordell-Weil group $X_{H}(\Q)$ can be read off of the elliptic curve database of the LMFDB. Using the explicit equations for the $j$-map $j : X_{H} \to \PP^{1}_{\Q}$ stored in the LMFDB, one can then compute the $j$-invariant of each rational point of $X_{H}$.
	
	We note that, as the LMFDB stores a list of known rational points on the modular curve $X_{H}$ and their corresponding $j$-invariants, it is often sufficient to check that the size of this list is equal to the size of $X_{H}(\Q)$ as determined above.
	
	This method can be used to find the rational points on all genus 1 modular curves in Table \ref{tbl:finite_b_closures:maximal_finite_b1_closed} not handled by one of the previous methods. The only non-cuspdial, non-CM rational points found on these curves are as follows:
	\begin{itemize}
		\item Four rational points on the modular curve \href{https://beta.lmfdb.org/ModularCurve/Q/12.32.1.b.1}{12.32.1.b.1}; two with $j$-invariant $-\frac{35937}{4}$ and two with $j$-invariant $\frac{109503}{64}$.
		\item Four rational points on the modular curve \href{https://beta.lmfdb.org/ModularCurve/Q/15.36.1.b.1}{15.36.1.b.1}; two with $j$-invariant $\frac{1331}{8}$ and two with $j$-invariant $-\frac{1680914269}{32768}$.
	\end{itemize}
	In addition, a number of higher genus modular curves in Table \ref{tbl:finite_b_closures:maximal_finite_b1_closed} admit a modular cover to a rank 0 elliptic curve. Applying the same method to the latter modular curve may then be used to show that the original modular curve has no non-cuspidal, non-CM rational points. The complete list of modular curves for which this method was used can be found in Table \ref{tbl:finite_b_closures:maximal_finite_b1_closed}.
	
	\subsection{Chabauty for genus 2 curves} \label{sec:rational_points:chabauty}
	
	Let $X_{H}$ be a modular curve such that $r < g(X_{H})$, where $r$ is the rank of the Mordell-Weil group $J_{X_{H}}(\Q)$ of the Jacobian of $X_{H}$. In this case, the Chabauty-Coleman method provides an explicit for computing the set $X_{H}(\Q)$ via $p$-adic integration. See \cite{mccallum2012method} for further details.
	
	Chabauty's method is implemented in \texttt{Magma} for genus 2 curves via two intrinsics: \texttt{Chabauty0} provides the functionality for curves whose Jacobian has rank 0, while \texttt{Chabauty} covers the case of rank 1 Jacobians. In both cases, by making use of additional techniques such as the Mordell-Weil sieve, the intrinsic returns the complete set of rational points on the input curve.
	
	To apply this to the modular curves in Table \ref{tbl:finite_b_closures:maximal_finite_b1_closed}, we proceed much as in the previous section. We first retrieve a model for the modular curve from the LMFDB, and use the \texttt{RankBounds} intrinsic to compute the rank of the Jacobian. This intrinsic is not guaranteed to compute the exact rank of the Jacobian; however, for all of our curves, an exact value for the rank is found. We then apply one of the aforementioned Chabauty intrinsics to compute the set of rational points on the curve, before using the $j$-map given in the LMFDB to compute the $j$-invariant of each rational point. As before, we also apply this method to a number of higher genus modular curves which admit a modular cover to a genus 2 curve with rank 0 or 1.
	
	The \texttt{Chabauty0} intrinsic was used for the modular curves \href{https://beta.lmfdb.org/ModularCurve/Q/12.72.2.a.1}{12.72.2.a.1}, \href{https://beta.lmfdb.org/ModularCurve/Q/18.72.2.c.1}{18.72.2.c.1}, \href{https://beta.lmfdb.org/ModularCurve/Q/18.72.2.c.2}{18.72.2.c.2}, \href{https://beta.lmfdb.org/ModularCurve/Q/18.108.2.a.1}{18.108.2.a.1}, \href{https://beta.lmfdb.org/ModularCurve/Q/18.108.2.d.1}{18.108.2.d.1}, \href{https://beta.lmfdb.org/ModularCurve/Q/18.108.2.d.2}{18.108.2.d.2}, \href{https://beta.lmfdb.org/ModularCurve/Q/18.108.4.g.1}{18.108.4.g.1}, \href{https://beta.lmfdb.org/ModularCurve/Q/18.108.4.g.2}{18.108.4.g.2}, \href{https://beta.lmfdb.org/ModularCurve/Q/24.72.2.e.1}{24.72.2.e.1}, \href{https://beta.lmfdb.org/ModularCurve/Q/24.72.2.hl.1}{24.72.2.hl.1} and \href{https://beta.lmfdb.org/ModularCurve/Q/24.72.2.hl.2}{24.72.2.hl.2}, while the \texttt{Chabauty} intrinsic was used for the modular curve \href{https://beta.lmfdb.org/ModularCurve/Q/15.90.4.d.1}{15.90.4.d.1}. We find that all rational points on these modular curves are cusps or CM points, except for the following:
	\begin{itemize}
		\item Three rational points on the modular curve \href{https://beta.lmfdb.org/ModularCurve/Q/18.72.2.c.1}{18.72.2.c.1} with $j$-invariant $406749952$.
		\item Three rational points on the modular curve \href{https://beta.lmfdb.org/ModularCurve/Q/18.72.2.c.2}{18.72.2.c.2} with $j$-invariant $1792$.
		\item Two rational points on the modular curve \href{https://beta.lmfdb.org/ModularCurve/Q/24.72.2.hl.1}{24.72.2.hl.1} with $j$-invariant $4913$.
		\item Two rational points on the modular curve \href{https://beta.lmfdb.org/ModularCurve/Q/24.72.2.hl.2}{24.72.2.hl.2} with $j$-invariant $16974593$.
	\end{itemize}
	
	\subsection{The curve 15.90.3.c.1} \label{sec:rational_points:15.90.3.c.1}
	
	The methods described above allow us to determine the rational points on all but one of the modular curves in Table \ref{tbl:finite_b_closures:maximal_finite_b1_closed}, namely the modular curve \href{https://beta.lmfdb.org/ModularCurve/Q/15.90.3.c.1}{15.90.3.c.1}. This is a genus 3 curve with two rational cusps. We will show in this section that only rational points on the curve are the two rational cusps.
	
	At the time of writing, there is no model for the curve available in the LMFDB. To obtain a model for the curve, we utilize Zywina's algorithm for computing the Atkin-Lehner action on cusp forms \cite{zywina2021computing, zywina2021computing-implementation}. This provides a function, \texttt{FindCanonicalModel}, for computing the canonical model of a modular curve $X_{H}$, where $H$ is an open subgroup of $\GL(\Zhat)$ with full determinant and containing $-I$. This gives a model for the modular curve \href{https://beta.lmfdb.org/ModularCurve/Q/15.90.3.c.1}{15.90.3.c.1} in $\PP^{2}$, given by
	\[
		C: (x + y + z) y^3 = (x^2 - x z + z^2) (x z - y^2).
	\]
	A search for points on $C$ reveals only the two rational points $(0 : 0 : 1)$ and $(1 : 0 : 0)$.
	
	We now compute the rank of the Jacobian $J$ of the modular curve \href{https://beta.lmfdb.org/ModularCurve/Q/15.90.3.c.1}{15.90.3.c.1}. To do so, we make use of the isogeny decomposition of $J$ computed using \cite[Section 6]{rouse2022ell-adic} and stored in the LMFDB. In our case, the Jacobian $J$ is isogenous to the product $E_{1} \times E_{2} \times E_{3}$ of three elliptic curves, where we may take $E_{1}$, $E_{2}$ and $E_{3}$ to be the elliptic curves 15.a7, 75.c1 and 225.c2 respectively. The elliptic curves $E_{1}$ and $E_{2}$ have (algebraic) rank 0, while the elliptic curve $E_{3}$ has rank 1. In particular, the Jacobian $J$ has rank 1.
	
	Since the Jacobian $J$ has rank strictly smaller than the genus of $C$, we can attempt to use Chabauty's method to determine the set of rational points on $C$. As Chabauty's method is not, at present, implemented in \texttt{Magma} for genus 3 curves, we undertake this computation by hand.
	
	Throughout the remainder, we work with the prime 2. This is a prime of good reduction for the modular curve \href{https://beta.lmfdb.org/ModularCurve/Q/15.90.3.c.1}{15.90.3.c.1}, as it does not divide the level, and it is straightforward to check that it is also a prime of good reduction for the model $C$. Moreover, we have that
	\[
		C(\FF_{2}) = \{(0 : 0 : 1), (1 : 0 : 0)\}.
	\]
	It therefore suffices to show that each residue disk, that is to say, each fiber of the map $C(\Q_{2}) \to C(\FF_{2})$, contains a single rational point.
	
	The curve $C$ has an involution defined by $\iota : (x : y : z) \mapsto (z : y : x)$. Using the \texttt{CurveQuotient} intrinsic in \texttt{Magma}, we find that the quotient curve $C / \langle \iota \rangle$ is isomorphic to the elliptic curve $E_{3}$. Therefore, denoting by $f : C \to C / \langle \iota \rangle$ the quotient map, we obtain a pushforward map $f_{\ast} : J \to E_{3}$. As $J$ is isogenous to the product $E_{1} \times E_{2} \times E_{3}$, it follows that the kernel of $f_{\ast}$ is isogenous to $E_{1} \times E_{2}$. In particular, the group $(\ker f_{\ast})(\Q)$ is torsion.
	
	Consider the $\Q_{2}$-vector space $H^{0}(J_{\Q_{2}}, \Omega^{1})$ of regular 1-forms on the Jacobian $J$, considered over $\Q_{2}$. The involution $\iota$ defines a trace map given by
	\begin{align*}
		\operatorname{Tr} : H^{0}(J_{\Q_{2}}, \Omega^{1}) & \to H^{0}(J_{\Q_{2}}, \Omega^{1}) \\
		\omega & \mapsto \omega + \iota^{\ast} \omega.
	\end{align*}
	Let $\omega$ be an element of the kernel of the trace map $\operatorname{Tr}$. Let $[D] \in J(\Q)$, for some divisor $D$ on $C / \Q_{2}$. Since $\omega$ is in the kernel of $\operatorname{Tr}$, we have that $\omega = -\iota^{\ast} \omega$. Therefore, by the functoriality of Coleman integration \cite[Proposition 2.4]{coleman1985torsion}, we have
	\begin{align*}
		\int_{0}^{[D]} \omega & = \frac{1}{2} \left(\int_{0}^{[D]} \omega + \int_{0}^{[D]} \omega\right) \\
		& = \frac{1}{2} \left(\int_{0}^{[D]} \omega - \int_{0}^{[D]} \iota^{\ast} \omega\right) \\
		& = \frac{1}{2} \left(\int_{0}^{[D]} \omega - \int_{0}^{[\iota(D)]} \omega\right) \\
		& = \frac{1}{2} \int_{[\iota(D)]}^{[D]} \omega \\
		& = \frac{1}{2} \int_{0}^{[D - \iota(D)]} \omega.
	\end{align*}
	By definition of $f$, we have that
	\[
		f_{\ast}([D - \iota(D)]) = [f(D) - f(\iota(D))] = [f(D) - f(D)] = 0.
	\]
	Therefore, the class $[D - \iota(D)]$ belongs to the intersection $(\ker f_{\ast}) \cap J(\Q) = (\ker f_{\ast})(\Q)$. By the above, the latter is torsion, and so, again by the functoriality of integration, we have
	\[
		\int_{0}^{[D]} \omega = \frac{1}{2} \int_{0}^{[D - \iota(D)]} \omega = 0.
	\]
	
	By \cite[Proposition 2.2]{milne1986jacobian}, there is an isomorphism of $\Q_{2}$-vector spaces $H^{0}(J_{\Q_{2}}, \Omega^{1}) \cong H^{0}(C_{\Q_{2}}, \Omega^{1})$. Throughout the remainder, we implicitly work with this isomorphism. In particular, setting
	\[
		\int_{P}^{Q} \omega := \int_{0}^{[Q - P]} \omega_{J},
	\]
	for any $P, Q \in C(\Q_{2})$ and $\omega \in H^{0}(C_{\Q_{2}}, \Omega^{1})$ corresponding to $\omega_{J} \in H^{0}(J_{\Q_{2}}, \Omega^{1})$, the above shows that
	\[
		\int_{P}^{Q} \omega = 0,
	\]
	for all $P, Q \in C(\Q)$ and $\omega \in \ker \operatorname{Tr}$.
	
	Consider the smooth affine model of $C$ on the affine chart $z \neq 0$ given by
	\[
		C_{z} : (X + Y + 1) Y^{3} - (X^2 - X + 1) (X - Y^{2}) = 0,
	\]
	where $X = \frac{x}{z}$ and $Y = \frac{y}{z}$. A basis for the vector space of regular 1-forms on $C_{z}$ is given by the 1-forms $\omega$, $X \omega$ and $Y \omega$, where
	\begin{align*}
		\omega &= \frac{dX}{4Y^{3} + 3(X + 1)Y^{2} + 2(X^{2} - X + 1)Y} \\
		&= -\frac{dY}{Y^{3} + (2X - 1)Y^{2} - 3X^{2} + 2X - 1}.
	\end{align*}
	The action of $\iota$ on $C_{z}$ is given by $\iota(X, Y) = (1/X, Y/X)$, from which we compute that
	\begin{align*}
		\iota^{\ast} \omega & = - \frac{X dX}{4 Y^{3} + 3(X + 1)Y^2 + 2(X^2 - X + 1)Y} = - X \omega, \\
		\iota^{\ast} (X \omega) & = - \omega, \\
		\iota^{\ast} (Y \omega) & = - Y \omega.
	\end{align*}
	Therefore, the kernel of the trace map $\operatorname{Tr}$ is two-dimensional, with basis $\{\omega + X \omega, Y \omega\}$.
	
	Let $P \in C(\Q)$ be in the residue disk of $(0 : 0 : 1) \in C(\FF_{2})$. Note that $P$ must belong to the affine chart $z \neq 0$, and so we may write $P = (u, v) \in C_{z}$, for some $u, v \in \Q$ such that $(u, v) \equiv (0, 0) \pmod{2}$. In particular, we have that $u, v \in 2 \Z_{2}$.
	
	By the argument above, we have that
	\[
		\int_{(0, 0)}^{(u, v)} \omega + X \omega = 0.
	\]
	As the points $(0, 0)$ and $(u, v)$ belong to the same residue disk, we can evaluate the integral on the left side by expanding in a power series in a local parameter on $C_{z}$. The function $Y$ is a uniformizer for $C_{z}$ at $(0, 0)$, both over $\Q_{2}$ and $\FF_{2}$. Moreover, using the equation for $C_{z}$ given above, we can write the function $X$ on $C_z$ as the power series
	\[
		X = Y^{2} + Y^{3} + Y^{4} + 2Y^{5} + O(Y^{6}) \in \Z_{2}[[Y]].
	\]
	Therefore, we have that
	\begin{align*}
		\omega + X \omega &= -\frac{(1 + X) dY}{Y^{3} + (2X - 1)Y^{2} - 3X^{2} + 2X - 1} \\
		&= -\frac{1 + Y^{2} + Y^{3} + Y^{4} + 2Y^{5} + O(Y^{6})}{-1 + Y^{2} + 3Y^{3} + Y^{4} + O(Y^{6})} dY \\
		&= \left(1 + 2Y^{2} + 4Y^{3} + 4Y^{4} + 12Y^{5} + O(Y^{6})\right) dY.
	\end{align*}
	Thus, we obtain that
	\begin{align*}
		0 = \int_{(0, 0)}^{(u, v)} \omega + X \omega &= \int_{0}^{v} \left(1 + 2Y^{2} + 4Y^{3} + 4Y^{4} + 12Y^{5} + O(Y^{6})\right) dY \\
		&= \left[Y + \frac{2}{3} Y^{3} + Y^{4} + \frac{4}{5} Y^{5} + 2 Y^{6} + O(Y^{7})\right]_{Y = 0}^{Y = v} \\
		&= v \left(1 + \frac{2}{3} v^{2} + v^{3} + \frac{4}{5} v^{4} + 2 v^{5} + O(v^{6})\right).
	\end{align*}
	Let $f(t) = 1 + \frac{2}{3} t^{2} + t^{3} + \frac{4}{5} t^{4} + 2 t^{5} + O(t^{6}) \in \Q_{2}[[t]]$ be the power series constructed above. Writing $f(t) = \sum_{i = 0}^{\infty} a_{i} t^{i}$, by construction, we have that $a_{i} = \frac{b_{i}}{i + 1}$ for some $b_{i} \in \Z_{2}$, for all $i$. In particular, for $i \geq 2$, we have that
	\[
		v_{2}(a_{i}) \geq - v_{2}(i + 1) > -i.
	\]
	Moreover, the above expansion shows that $v_{2}(a_{1}) = \infty > -1$. Therefore, the Newton polygon of $f$ has no segments with slope less than or equal to $-1$. By the theory of Newton polygons \cite[Chapter IV.4]{koblitz1984p-adic}, it follows that $f$ has no roots in $2\Z_{2}$. In particular, since $v \in 2\Z_{2}$, it follows that we must have $v = 0$. The equation of $C_{z}$ then shows that $u = 0$ as well, since $u \in 2\Z_{2}$, and so $P = (0, 0)$. In other words, we have shown that the only rational point in the residue disk of $(0 : 0 : 1) \in C(\FF_{2})$ is the point $(0 : 0 : 1) \in C(\Q)$.
	
	The involution $\iota$ maps the residue disk of $(0 : 0 : 1) \in C(\FF_{2})$ to the residue disk of $(1 : 0 : 0) \in C(\FF_{2})$. Therefore, there is also a single rational point in the residue disk of $(1 : 0 : 0) \in C(\FF_{2})$, namely $(1 : 0 : 0) \in C(\Q)$. Thus, the modular curve \href{https://beta.lmfdb.org/ModularCurve/Q/15.90.3.c.1}{15.90.3.c.1} has exactly two rational points, both of which are cuspidal.
	
	\section{Isolated points with rational \texorpdfstring{$j$}{j}-invariant on \texorpdfstring{$X_1(n)$}{X\_1(n)}} \label{sec:isolated_points}
	
	In this final section, we will aim to use the results of this paper, and particularly the notion of $H$-closures, to classify, conditionally on Conjecture \ref{conj:finite_degrees:sl_intersections}, the $j$-invariants of all non-cuspidal, non-CM isolated points with rational $j$-invariant on the modular curves $X_{1}(n)$. In particular, we show that the conjectural set of such $j$-invariants given by Bourdon, Hashimoto, Keller, Klagsbrun, Lowry-Duda, Morrison, Najman and Shukla in \cite[Conjecture 4]{bourdon2025towards} is complete assuming Conjecture \ref{conj:finite_degrees:sl_intersections}.
	
	Following \cite{terao2025isolated}, we set the following piece of notation. Given an open subgroup $H$ of $\GL(\Zhat)$, and a closed point $x \in X_{H}$, we say that a pair $(E, [\alpha]_{H})$ is a minimal representative for $x$ if $E$ is an elliptic curve defined over $\Q(j(x))$, $\alpha$ a profinite level structure on $E$, and the closed point $x$ corresponds to the Galois orbit of the geometric point $[(E, [\alpha]_{H})] \in X_{H}(\Qbar)$. By \cite[Lemma 4.9 and Theorem 4.20]{terao2025isolated}, such a minimal representative exists for any closed point $x \in X_{H}$.
	
	The central result which allows us to relate the work done in this paper to the question of isolated points is the following. An existing argument, used for instance in \cite[Theorem 38]{bourdon2025towards} and \cite[Theorem 5.4]{terao2025isolated}, allows one to construct, from an isolated point on any modular curve $x \in X_{H}$, another isolated point on a specific modular curve closely related to the image of the Galois representation associated to a minimal representative for $x$. We show that this same argument can be applied to a large set of subgroups of $\GL(\Zhat)$ characterized by the notion of $H$-equivalence defined in Section \ref{sec:closures}. The precise statement is as follows.
	
	\begin{theorem} \label{thm:isolated_points:equivalent_isolated}
		Let $H$ be an open subgroup of $\GL(\Zhat)$, and let $x \in X_{H}$ be a non-cuspidal closed point with minimal representative $(E, [\alpha]_{H})$ and $j(x) \notin \{0, 1728\}$. Let $G$ be an open subgroup of $\GL(\Zhat)$ such that $G_{E, \alpha} \leq G$ and the groups $G$ and $G_{E, \alpha}$ are $\pm H$-equivalent. Suppose that $x$ is isolated. Then the closed point $y \in X_{G}$ corresponding to the $G_{\Q}$-orbit of the geometric point $[(E, [\alpha]_{G})] \in X_{G}(\Qbar)$ is also isolated.
	\end{theorem}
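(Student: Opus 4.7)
The plan is to argue by contrapositive: assume that $y \in X_G$ is not isolated, and produce a non-trivial family of degree $\deg(x)$ divisors on $X_H$ passing through $[x]$, contradicting the hypothesis that $x$ is isolated. Set $K = \pm H \cap G$; this contains $-I$ since $-I \in A_{E,\alpha} \leq G_{E,\alpha} \leq G$, and the inclusions $K \leq \pm H$, $K \leq G$ yield finite $\Q$-morphisms $f \colon X_K \to X_{\pm H} \cong X_H$ and $g \colon X_K \to X_G$, both compatible with the $j$-map. Let $z \in X_K$ be the closed point arising from $[(E, [\alpha]_K)]$, so $f(z) = x$ and $g(z) = y$. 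By Corollary \ref{thm:preliminaries:modular_curve_point_degrees} together with $G_{E,\alpha} \leq G$, one finds $\deg(y) = [\Q(j(x)) : \Q]$ and $\deg(x) = \deg(z) = [\Q(j(x)) : \Q] \cdot [G_{E,\alpha} : G_{E,\alpha} \cap \pm H]$. The $\pm H$-equivalence $(\pm H) G = (\pm H) G_{E,\alpha}$ yields $[G_{E,\alpha} : G_{E,\alpha} \cap \pm H] = [G : K] = \deg(g)$ after dividing by $|\pm H|$, so $\deg(z) = \deg(x) = \deg(g) \cdot \deg(y)$; consequently $g^{-1}(y) = \{z\}$ scheme-theoretically and $f_*[z] = [x]$ as divisors on $X_H$.

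In the $\PP^1$-parametrized case, take a non-constant $\Q$-morphism $\phi \colon \PP^1 \to X_G^{(\deg y)}$ with $\phi(c_0) = [y]$. The pullback $g^* \colon X_G^{(\deg y)} \to X_K^{(\deg z)}$ is a finite injective $\Q$-morphism (by the projection formula $g_* g^* = \deg(g) \cdot \id$), and the pushforward $f_* \colon X_K^{(\deg z)} \to X_H^{(\deg x)}$ is a $\Q$-morphism preserving divisor degrees. The composition $\psi := f_* \circ g^* \circ \phi$ satisfies $\psi(c_0) = f_*(g^*[y]) = f_*[z] = [x]$. Post-composing with the $j$-map and using $j_{X_K} = j_{X_H} \circ f = j_{X_G} \circ g$ gives $j \circ \psi = \deg(g) \cdot (j \circ \phi)$ as morphisms $\PP^1 \to X(1)^{(\deg x)}$; since $j \circ \phi$ is non-constant (else $\phi$ would factor through the finite fibre $j^{-1}(j(y)) \subset X_G$), so is $\psi$, witnessing that $x$ is not $\PP^1$-isolated. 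The AV-parametrized case is handled analogously using the induced homomorphisms on Picard varieties.

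The principal obstacle is the AV-parametrized case, because while $g^* \colon \operatorname{Pic}^0 X_G \to \operatorname{Pic}^0 X_K$ is injective, the pushforward $f_* \colon \operatorname{Pic}^0 X_K \to \operatorname{Pic}^0 X_H$ has the Prym variety of $f$ as kernel, so one must check that the positive-rank abelian subvariety witnessing non-isolation of $y$ is not collapsed by $f_* \circ g^*$. I expect this to follow either from an explicit analysis of the Prym variety using the group structure of $K = \pm H \cap G \leq \pm H$, or via a moduli-theoretic argument: for infinitely many $\Q$-rational points $a$ on the parametrizing AV, the associated degree-$\deg(y)$ closed point $y_a \in X_G$ generically has extended adelic image $G_{E_a, \alpha_a}$ still $\pm H$-equivalent to $G$, so $y_a$ lifts to a unique degree-$\deg(z)$ closed point $z_a \in X_K$ projecting via $f$ to a degree-$\deg(x)$ closed point $x_a \in X_H$, yielding the required infinite family containing $x$.
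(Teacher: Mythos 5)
Your approach shares the same skeleton as the paper's proof — both pass through the intermediate curve $X_{\pm(G\cap H)} \cong X_{G\cap H}$ and its point $z$, and both establish the degree identities $\deg(z) = \deg(x)$ and $\deg(z) = \deg(g)\cdot\deg(y)$ using the $\pm H$-equivalence of $G$ and $G_{E,\alpha}$. The difference is what you do with those identities. The paper cites \cite[Theorem 5.2]{terao2025isolated} to conclude $z$ is isolated from $x = f(z)$ being isolated, and then \cite[Theorem 5.1]{terao2025isolated} to conclude $y = g(z)$ is isolated; the degree conditions verified in the paper's proof, namely $[G_{E,\alpha}\cap A_{E,\alpha}H : G_{E,\alpha}\cap A_{E,\alpha}(G\cap H)] = 1$ and $[G_{E,\alpha}\cap A_{E,\alpha}G : G_{E,\alpha}\cap A_{E,\alpha}(G\cap H)] = [\pm G : \pm(G\cap H)]$, are exactly what those theorems require. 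You instead attempt to prove the contrapositive from scratch by constructing the parametrizing family on $X_H$ directly via $f_* \circ g^*$.

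The gap you yourself flag is a genuine one, and it is precisely the content of the pullback direction of \cite[Theorem 5.2]{terao2025isolated}, in contrapositive form. In the AV-parametrized case, a positive-rank abelian subvariety $A \subset \operatorname{Pic}^0 X_G$ parametrizing degree-$\deg(y)$ points through $[y]$ pulls back under $g^*$ to a positive-rank $g^*(A) \subset \operatorname{Pic}^0 X_K$ (here $g^*$ is injective since $g_* g^* = \deg(g)\cdot\id$), but the pushforward $f_* : \operatorname{Pic}^0 X_K \to \operatorname{Pic}^0 X_H$ has the Prym variety of $f$ as its connected kernel, and nothing in your argument prevents $g^*(A)$ from landing inside it. Neither of the two workarounds you sketch is carried out: the Prym analysis would require a genuinely new group-theoretic input, and the moduli-theoretic sketch conflates points on the parametrizing abelian variety with points of $X_G$ having prescribed adelic image, which is not how AV-parametrized families are defined (the points $y_a$ in such a family are not a priori produced from elliptic curves with controlled Galois image, so one cannot run the $\pm H$-equivalence argument fiberwise). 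Your $\PP^1$-parametrized case is essentially correct and amounts to reproving a special case of the cited theorems, but the theorem as stated requires both cases. The cleanest repair is to invoke the two cited transfer theorems as the paper does, since they already package the abelian-variety bookkeeping; reproving them here would be a significant detour.
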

	
	\begin{proof}
		Since both $G$ and $H$ are open subgroups of $\GL(\Zhat)$, the intersection $G \cap H$ is also an open subgroup of $\GL(\Zhat)$. Therefore, consider the pair of maps of modular curves
		\[\begin{tikzcd}
			X_{H} & X_{G \cap H} \ar[l, "f"'] \ar[r, "g"] & X_{G},
		\end{tikzcd}\]
		each induced by the natural inclusion of subgroups. Let $z \in X_{G \cap H}$ be the closed point corresponding to the $G_{\Q}$-orbit of the geometric point $[(E, [\alpha]_{G \cap H})] \in X_{G \cap H}(\Qbar)$. By the definition of the inclusion morphisms, we have that $f(z) = x$ and $g(z) = y$.
		
		Since $A_{E, \alpha} \leq G_{E, \alpha} \leq G$, we have that
		\begin{align*}
			[G_{E, \alpha} \cap A_{E, \alpha} H : G_{E, \alpha} \cap A_{E, \alpha}(G \cap H)] &= [G_{E, \alpha} \cap A_{E, \alpha} H : G_{E, \alpha} \cap G \cap A_{E, \alpha} H] \\
			&= [G_{E, \alpha} \cap A_{E, \alpha} H : G_{E, \alpha} \cap A_{E, \alpha} H] \\
			&= 1.
		\end{align*}
		Therefore, since $x = f(z)$ is isolated, by \cite[Theorem 5.2]{terao2025isolated}, the closed point $z \in X_{G \cap H}$ is also isolated.
		
		As $j(E) = j(x) \notin \{0, 1728\}$, we have that $A_{E, \alpha} = \{\pm I\}$. Thus,
		\begin{align*}
			[G_{E, \alpha} \cap A_{E, \alpha} G : G_{E, \alpha} \cap A_{E, \alpha} (G \cap H)] = [G_{E, \alpha} : G_{E, \alpha} \cap (\pm H)].
		\end{align*}
		Since the groups $G$ and $G_{E, \alpha}$ are $\pm H$-equivalent, we have that $G \subseteq (\pm H) G = (\pm H) G_{E, \alpha}$. Moreover, since $G_{E, \alpha} \leq G$, we obtain that $G = (\pm H \cap G) G_{E, \alpha}$. Consider now the map between the set of right cosets $(\pm H \cap G_{E, \alpha}) \backslash G_{E, \alpha}$ and the set of right cosets $(\pm H \cap G) \backslash G$ given by
		\begin{align*}
			\varphi : (\pm H \cap G_{E, \alpha}) \backslash G_{E, \alpha} & \to (\pm H \cap G) \backslash G \\
			(\pm H \cap G_{E, \alpha}) g & \mapsto (\pm H \cap G) g.
		\end{align*}
		Given any two elements $g, g' \in G_{E, \alpha}$, we have
		\begin{align*}
			(\pm H \cap G) g = (\pm H \cap G) g' & \iff g' g^{-1} \in \pm H \cap G \\
			& \iff g' g^{-1} \in \pm H \cap G \cap G_{E, \alpha} \\
			& \iff g' g^{-1} \in \pm H \cap G_{E, \alpha} \\
			& \iff (\pm H \cap G_{E, \alpha}) g = (\pm H \cap G_{E, \alpha}) g'.
		\end{align*}
		Therefore, the map $\varphi$ is both well-defined and injective. Moreover, since $G = (\pm H \cap G) G_{E, \alpha}$, the map $\varphi$ is also surjective, and hence a bijection. Thus, we obtain that
		\begin{align*}
			[G_{E, \alpha} \cap A_{E, \alpha} G : G_{E, \alpha} \cap A_{E, \alpha} (G \cap H)] &= [G_{E, \alpha} : G_{E, \alpha} \cap (\pm H)] \\
			&= [G : G \cap (\pm H)] \\
			&= [\pm G : \pm(G \cap H)].
		\end{align*}
		By \cite[Theorem 5.1]{terao2025isolated}, since the closed point $z \in X_{G \cap H}$ is isolated, the closed point $y = g(z)$ is isolated as well.
	\end{proof}
	
	\pagebreak
	
	In addition to providing a connection between the notions defined in Section \ref{sec:closures} and isolated points on modular curves, this result provides a convenient framework for understanding existing work on isolated points on the modular curves $X_{0}(n)$ and $X_{1}(n)$. Fix $n \geq 1$, and consider the following inclusions of subgroups of $\GL(\Zhat)$:
	\[\begin{tikzcd}[row sep=tiny]
		\clo{B_{0}(n)}{G_{E, \alpha}} \\
		& \clo{B_{1}(n)}{G_{E, \alpha}} \ar[ul] \\
		\clo{\cB_{0}(n)}{G_{E, \alpha}} \ar[uu] \\
		& \clo{\cB_{1}(n)}{G_{E, \alpha}} \ar[ul] \ar[uu] \\
		(G_{E, \alpha})_{\agr} \ar[uu] \\
		& G_{E, \alpha}(n) \ar[uu] \\
		G_{E, \alpha} \ar[uu] \ar[ur]
	\end{tikzcd}\]
	By Theorem \ref{thm:isolated_points:equivalent_isolated}, if a closed point $x \in X_{0}(n)$ with minimal representative $(E, [\alpha]_{B_{0}(n)})$ is isolated, then we obtain an isolated point on the modular curve corresponding to any subgroup $G$ lying between $G_{E, \alpha}$ and $\clo{B_{0}(n)}{G_{E, \alpha}}$. Similarly, if there is an isolated point $x \in X_{1}(n)$ with minimal representative $(E, [\alpha]_{B_{1}(n)})$, then we obtain an isolated point on the modular curve corresponding to any subgroup $G$ lying between $G_{E, \alpha}$ and $\clo{B_{1}(n)}{G_{E, \alpha}}$.
	
	Using these facts, and the diagram above, we can recontextualize a number of existing results on isolated points on modular curves. For instance, the single-source theorem for isolated points given in \cite[Theorem 5.3]{terao2025isolated} corresponds to the case of $G = G_{E, \alpha}$. Similarly, \cite[Theorem 38]{bourdon2025towards} and \cite[Theorem 5.5]{terao2025isolated} both correspond to the case of $G = G_{E, \alpha}(n)$. In \cite[Section 7]{terao2025isolated}, the set of $j$-invariants of non-cuspidal, non-CM isolated points with rational $j$-invariant on the modular curves $X_{0}(n)$ is determined, under the same conjecture of Zywina. The key step in this classification is \cite[Theorem 7.3]{terao2025isolated}, which corresponds to the case $G = (G_{E, \alpha})_{\agr}$ above.
	
	The above diagram also illustrates why the case of the modular curves $X_{1}(n)$ is more difficult than the case of the modular curves $X_{0}(n)$. Indeed, while the $B_{0}(n)$-closure of $G_{E, \alpha}$ contains the agreeable closure $(G_{E, \alpha})_{\agr}$, the same is not true of the $B_{1}(n)$-closure. As such, while the classification of Zywina of the agreeable closures $(G_{E, \alpha})_{\agr}$ associated to elliptic curves $E/\Q$ can be directly leveraged to understand the isolated points on the modular curves $X_{0}(n)$, the same is not true of the modular curves $X_{1}(n)$.
	
	In this present work however, we have conjecturally classified the set of $\cB_{1}(n)$-closures $\clo{\cB_{1}}{G_{E, \alpha}}$ for elliptic curves $E/\Q$. Therefore, the above diagram suggests that one can apply the same approach to determining the isolated points with rational $j$-invariant on the modular curves $X_{1}(n)$ as was done for the modular curves $X_{0}(n)$ above, with the role of the agreeable closure $(G_{E, \alpha})_{\agr}$ played by the $\cB_{1}$-closure $\clo{\cB_{1}}{G_{E, \alpha}}$.
	
	This is precisely the strategy we employ to classify the $j$-invariants of the non-cuspidal, non-CM isolated points with rational $j$-invariants on the modular curves $X_{1}(n)$. As mentioned at the start of this section, we show that, assuming Conjecture \ref{conj:finite_degrees:sl_intersections}, the conjectural set of such $j$-invariants given in \cite[Conjecture 4]{bourdon2025towards} is complete.
	
	\begin{theorem} \label{thm:isolated_points:x1_isolated_j_invariants}
		Suppose that Conjecture \ref{conj:finite_degrees:sl_intersections} holds. Fix $n \geq 1$, and let $x \in X_{1}(n)$ be a non-cuspidal, non-CM isolated closed point with $j(x) \in \Q$. Then
		\[
			j(x) \in \left\{-\frac{140625}{8}, \frac{351}{4}, -9317, -162677523113838677\right\}.
		\]
		In particular, \cite[Conjecture 4]{bourdon2025towards} holds.
	\end{theorem}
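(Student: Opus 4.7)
The plan is to combine the transfer principle of Theorem \ref{thm:isolated_points:equivalent_isolated} with the conjectural classification of $\cB_{1}(n)$-closures obtained in Section \ref{sec:finite_degrees}. Let $x \in X_{1}(n)$ be a non-cuspidal, non-CM isolated closed point with $j := j(x) \in \Q$. Since $j \in \Q \setminus \Qcm$ we have $j \notin \{0, 1728\}$, so $x$ admits a minimal representative $(E, [\alpha]_{B_{1}(n)})$ in the sense recalled at the start of this section. Set $G := \clo{\cB_{1}(n)}{G_{E,\alpha}}$, an overgroup of $G_{E,\alpha}$ whose conjugacy class is $\cG := \clo{\cB_{1}(n)}{\cG_{j}}$. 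Since $\cB_{1}(n)$-equivalence implies $B_{1}(n) = \pm B_{1}(n)$-equivalence, Theorem \ref{thm:isolated_points:equivalent_isolated} yields an isolated closed point $y \in X_{G}$ with $j(y) = j$.

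By Theorem \ref{thm:finite_degrees:x1_fiber_degrees}, under Conjecture \ref{conj:finite_degrees:sl_intersections} the conjugacy class $\cG$ is listed either in Table \ref{tbl:infinite_b_closures:infinite_b1_closures} or in Table \ref{tbl:finite_b_closures:finite_b1_closures}. If $\cG$ appeared in Table \ref{tbl:infinite_b_closures:infinite_b1_closures}, then by Remark \ref{rmk:infinite_degrees:all_have_genus_0} the curve $X_{\cG}$ would be isomorphic to $\Pone_{\Q}$, so every closed point of $X_{\cG}$ of a given degree would belong to an infinite family of equal-degree points parametrized by $\Pone_{\Q}$, contradicting the isolation of $y$. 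Hence $\cG$ must appear in Table \ref{tbl:finite_b_closures:finite_b1_closures}, and the problem is reduced to determining the non-cuspidal, non-CM rational $j$-invariants at which the finitely many modular curves $X_{\cG}$ in that table admit an isolated point.

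For each such $\cG$, the finite list of candidate rational $j$-invariants is already tabulated: they come from the non-cuspidal, non-CM rational points computed in Section \ref{sec:rational_points} (see Tables \ref{tbl:finite_b_closures:exceptional_j_invariants} and \ref{tbl:finite_b_closures:exceptional_j_infinite_sl}). A routine case-by-case check then keeps only those candidates whose preimage in $X_{\cG}$ is isolated, and we expect this enumeration to yield exactly the four $j$-invariants in the statement, thereby confirming \cite[Conjecture 4]{bourdon2025towards}.

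The main obstacle is this final case analysis. For each pair $(\cG, j')$ with $\cG$ in Table \ref{tbl:finite_b_closures:finite_b1_closures} and $j'$ one of the finitely many candidate rational $j$-invariants on $X_{\cG}$, one must decide whether the closed point of $X_{\cG}$ above $j'$ is isolated or belongs to an infinite family. The natural way to do this is to compute its degree via Corollary \ref{thm:closures:modular_curve_point_degrees} and compare with the degrees occurring infinitely often on $X_{\cG}$, or on an intermediate cover $X_{\cG} \to X_{\cG'}$ with $\cG'$ in Table \ref{tbl:infinite_b_closures:infinite_b1_closures}, using the explicit fiber-degree data provided by Theorems \ref{thm:infinite_degrees:infinite_fiber_degrees} and \ref{thm:finite_degrees:x1_fiber_degrees}.
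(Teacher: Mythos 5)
Your reduction to Table \ref{tbl:finite_b_closures:finite_b1_closures} faithfully reproduces the paper's argument: you pass to the $\cB_{1}(n)$-closure $\cG = \clo{\cB_{1}(n)}{\cG_{j}}$, invoke Theorem \ref{thm:isolated_points:equivalent_isolated} (valid since $\cB_{1}(n)$-equivalence implies $B_{1}(n)$-equivalence and $B_{1}(n) = \pm B_{1}(n)$), and rule out Table \ref{tbl:infinite_b_closures:infinite_b1_closures} via Remark \ref{rmk:infinite_degrees:all_have_genus_0} and $\PP^{1}$-parametrization. One small imprecision: the relevant finite list of candidate $j$-invariants is exactly the thirty-three $j$-invariants recorded alongside the entries of Table \ref{tbl:finite_b_closures:finite_b1_closures}, not the full contents of Tables \ref{tbl:finite_b_closures:exceptional_j_invariants} and \ref{tbl:finite_b_closures:exceptional_j_infinite_sl}; this list is determined by Theorem \ref{thm:finite_degrees:x1_fiber_degrees}, which shows that Table \ref{tbl:finite_b_closures:finite_b1_closures} gives a complete and uniquely-labeled assignment of $j$-invariants.

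The genuine gap is in your final step. You propose to decide whether the point $y \in X_{\cG}$ above a candidate $j'$ is isolated by computing $\deg(y)$ via Corollary \ref{thm:closures:modular_curve_point_degrees} and comparing with the degrees occurring infinitely often on $X_{\cG}$. This is not a valid criterion for isolatedness: a degree $d$ can occur for infinitely many points on a curve while a particular degree $d$ point remains isolated (isolatedness requires the specific point to lie in the image of a positive-dimensional subvariety of $\Sym^d$, not merely that the degree recur), and conversely the fiber-degree data of Theorems \ref{thm:infinite_degrees:infinite_fiber_degrees} and \ref{thm:finite_degrees:x1_fiber_degrees} concern only points with rational $j$-invariant, saying nothing about the broader degree-$d$ locus. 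The mechanism by which the paper turns this into a valid argument is \cite[Theorem 2.17]{terao2025isolated} applied to a genus-0 cover, which is exactly what eliminates the Table \ref{tbl:infinite_b_closures:infinite_b1_closures} case; but no such cover is available once $\cG$ sits in Table \ref{tbl:finite_b_closures:finite_b1_closures}. The paper instead sidesteps this entirely: it passes the thirty-three candidate $j$-invariants to the algorithm of \cite{bourdon2025towards}, which decides (working directly with $X_{1}(n)$) whether a given rational $j$-invariant can support an isolated point, producing five survivors, and then cites \cite[Section 9.0.2]{bourdon2025towards} to eliminate the spurious $j$-invariant $-\frac{882216989}{131072}$. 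Without this algorithm, or a substitute criterion that is actually sound for isolatedness (e.g.\ an explicit construction of $\PP^{1}$- or AV-parametrizations for the twenty-nine remaining candidates), your argument does not close.
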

	
	\begin{proof}
		Let $(E, [\alpha]_{B_1(n)})$ be a minimal representative for the closed point $x \in X_{1}(n)$. By Theorem \ref{thm:finite_degrees:x1_fiber_degrees}, the conjugacy class of the $\cB_{1}(n)$-closure of the extended adelic Galois image $G_{E, \alpha}$ is given in Table \ref{tbl:infinite_b_closures:infinite_b1_closures} or \ref{tbl:finite_b_closures:finite_b1_closures}. Moreover, by definition, the groups $\clo{\cB_{1}(n)}{G_{E, \alpha}}$ and $G_{E, \alpha}$ are $B_{1}(n)$-equivalent. Therefore, by Theorem \ref{thm:isolated_points:equivalent_isolated}, the closed point $y \in X_{\clo{\cB_{1}(n)}{G_{E, \alpha}}}$ corresponding to the Galois orbit of the geometric point $[(E, [\alpha]_{\clo{\cB_{1}(n)}{G_{E, \alpha}}})] \in X_{\clo{\cB_{1}(n)}{G_{E, \alpha}}}(\Qbar)$ is also isolated.
		
		Suppose that the conjugacy class of the subgroup $\clo{\cB_{1}(n)}{G_{E, \alpha}}$ is given in Table \ref{tbl:infinite_b_closures:infinite_b1_closures}. By Remark \ref{rmk:infinite_degrees:all_have_genus_0}, the modular curve $X_{\clo{\cB_{1}(n)}{G_{E, \alpha}}}$ thus has genus 0. By \cite[Theorem 2.17]{terao2025isolated}, it follows that the closed point $y \in X_{\clo{\cB_{1}(n)}{G_{E, \alpha}}}$ is $\PP^{1}$-parametrized, contradicting the fact that $y$ is isolated.
		
		Thus, the conjugacy class of the subgroup $\clo{\cB_{1}(n)}{G_{E, \alpha}}$ is given in Table \ref{tbl:finite_b_closures:finite_b1_closures}. Moreover, the $j$-invariant $j(x)$ must be one of the 33 $j$-invariants listed in Table \ref{tbl:finite_b_closures:finite_b1_closures}. The work of Bourdon, Hashimoto, Keller, Klagsbrun, Lowry-Duda, Morrison, Najman and Shukla in \cite{bourdon2025towards} provides an algorithm for checking whether there exists an isolated point on a modular curve $X_{1}(n)$ with given rational $j$-invariant $j \in \Q$ . Running this algorithm on the 33 $j$-invariants listed in Table \ref{tbl:finite_b_closures:finite_b1_closures}, we find that the only $j$-invariants which can give rise to an isolated point on a modular curve $X_{1}(n)$ are
		\[
			 -\frac{882216989}{131072}, -\frac{140625}{8}, \frac{351}{4}, -9317 \text{ and } {-162677523113838677}.
		\]
		The latter four $j$-invariants are those which appear in the statement of the theorem, and are known to correspond to non-CM isolated points on some modular curve $X_{1}(n)$, see \cite{bourdon2025towards}. The first $j$-invariant also appears in the cited work, where it is shown \cite[Section 9.0.2]{bourdon2025towards} that there are no isolated points on any modular curve $X_{1}(n)$ with $j$-invariant $-\frac{882216989}{131072}$, thus concluding the proof.
	\end{proof}
	
	\begin{remark}
		Through the computations carried out in \cite{bourdon2025towards}, the authors find six $j$-invariants which might potentially correspond to isolated points on some modular curve $X_{1}(n)$. Four of these are the ones which appear in Theorem \ref{thm:isolated_points:x1_isolated_j_invariants}, and do indeed correspond to isolated points. The two others are spurious, and while their algorithm cannot rule out the contrary, do not in fact correspond to isolated points on some modular curve $X_{1}(n)$.
		
		The first is the $j$-invariant $-\frac{882216989}{131072}$ mentioned in the proof of Theorem \ref{thm:isolated_points:x1_isolated_j_invariants}, which the algorithm given in \cite{bourdon2025towards} suggests may correspond to an isolated point on the modular curve $X_{1}(17)$. This can be somewhat explained by a combination of two facts. Firstly, it gives rise to a degree 4 point on the modular curve $X_{1}(17)$, which is smaller than the genus of $X_{1}(17)$ and thus may be isolated. Moreover, this $j$-invariant appears in Table \ref{tbl:finite_b_closures:finite_b1_closures}: it is the unique non-CM $j$-invariant with $\cB_{1}(17)$-closure $\clo{\cB_{1}(17)}{\cG_{j}}$ equal to \href{https://beta.lmfdb.org/ModularCurve/Q/17.36.1.a.1}{17.36.1.a.1}. In fact, the same is true of the $B_{1}(17)$-closures of $\cG_{j}$. It is thus impossible to use an argument as in Theorem \ref{thm:isolated_points:equivalent_isolated} to show that it is not isolated.
		
		The second spurious $j$-invariant is $\frac{16778985534208729}{81000}$, which gives rise to a potentially isolated degree 4 point on the modular curve $X_{1}(24)$. However, this $j$-invariant does not appear in Table \ref{tbl:finite_b_closures:finite_b1_closures}. In fact, the $\cB_{1}(24)$-closure of $\cG_{j}$ is the conjugacy class \href{https://beta.lmfdb.org/ModularCurve/Q/24.48.0.bt.1}{24.48.0.bt.1}, which has genus 0. Therefore, Theorem \ref{thm:isolated_points:equivalent_isolated}, the $j$-invariant $\frac{16778985534208729}{81000}$ does not correspond to an isolated point on a modular curve $X_{1}(n)$. However, the mod-24 Galois image $\cG_{j}(24)$ of $j$ is the conjugacy class \href{https://beta.lmfdb.org/ModularCurve/Q/24.96.1.dj.3/}{24.96.1.dj.3}, whose modular curve has genus 1 and rank 1. Therefore, there are infinitely many non-CM $j$-invariants with the same mod-24 Galois image, and each will give rise to a spurious potentially isolated $j$-invariant when applying the algorithm in \cite{bourdon2025towards}. The second smallest such $j$-invariant is
		\[
			\frac{3760942468798987805603108263239159580334655361}{77436872925988468414306640625000000000}.
		\]
	\end{remark}
	
	\clearpage
	
	\appendix
	
	\section{Tables}
	

	
	\clearpage
	
	\begin{landscape}
	\begin{table}
		\caption{The $j$-invariants of the non-cuspidal, non-CM rational points on the modular curves given in Table \ref{tbl:finite_b_closures:maximal_finite_b1_closed}. For each $j$-invariant $j$, the label of the modular curve of Table \ref{tbl:finite_b_closures:maximal_finite_b1_closed} on which it lies is given, alongside the $\GL$-level $n$ of $\cG_{j}$, the index $i = [\GL(\Zhat) : \cG_{j}]$, the genus of the modular curve $X_{\cG_{j}}$ and the $\SL$-level $m$ of $\cG_{j}$. We also provide generators for a representative of the conjugacy class $\cG_{j}$ as a subgroup of $\GL(\Z[n])$. The shaded rows indicate $j$-invariants $j$ such that the intersection $\cG_{j} \cap \SL(\Zhat)$ is not given in Table \ref{tbl:finite_b_closures:infinite_sl_intersections_zywina}.} \label{tbl:finite_b_closures:exceptional_j_infinite_sl}
		\renewcommand*{\arraystretch}{1.25}
		\begin{tabular}{llrrrrl}
			LMFDB & $j$-invariant & $n$ & $i$ & $g$ & $m$ & Generators \\ \toprule
			\href{https://beta.lmfdb.org/ModularCurve/Q/7.56.1.b.1}{7.56.1.b.1} & $\frac{2268945}{128}$ & 56 & 112 & 5 & 14 & $\begin{bsmallmatrix} 1 & 28 \\ 41 & 55 \end{bsmallmatrix}$, $\begin{bsmallmatrix} 49 & 22 \\ 41 & 27 \end{bsmallmatrix}$, $\begin{bsmallmatrix} 20 & 21 \\ 1 & 1 \end{bsmallmatrix}$, $\begin{bsmallmatrix} 0 & 19 \\ 23 & 16 \end{bsmallmatrix}$ \\
			\rowcolor{lightgray} \href{https://beta.lmfdb.org/ModularCurve/Q/12.32.1.b.1}{12.32.1.b.1} & $-\frac{35937}{4}$ & 12 & 64 & 1 & 12 & $\begin{bsmallmatrix} 9 & 5 \\ 8 & 3 \end{bsmallmatrix}$, $\begin{bsmallmatrix} 10 & 1 \\ 5 & 3 \end{bsmallmatrix}$, $\begin{bsmallmatrix} 1 & 0 \\ 9 & 7 \end{bsmallmatrix}$ \\
			\rowcolor{lightgray} \href{https://beta.lmfdb.org/ModularCurve/Q/12.32.1.b.1}{12.32.1.b.1} & $\frac{109503}{64}$ & 12 & 64 & 1 & 12 & $\begin{bsmallmatrix} 6 & 5 \\ 5 & 3 \end{bsmallmatrix}$, $\begin{bsmallmatrix} 9 & 8 \\ 5 & 3 \end{bsmallmatrix}$, $\begin{bsmallmatrix} 11 & 8 \\ 3 & 1 \end{bsmallmatrix}$ \\
			\rowcolor{lightgray} \href{https://beta.lmfdb.org/ModularCurve/Q/15.36.1.b.1}{15.36.1.b.1} & $\frac{1331}{8}$ & 1560 & 288 & 17 & 30 & $\begin{bsmallmatrix} 439 & 117 \\ 15 & 4 \end{bsmallmatrix}$, $\begin{bsmallmatrix} 71 & 27 \\ \smash[b]{\shortminus}405 & \smash[b]{\shortminus}154 \end{bsmallmatrix}$, $\begin{bsmallmatrix} 18 & 31 \\ 5 & 9 \end{bsmallmatrix}$, $\begin{bsmallmatrix} \smash[b]{\shortminus}57 & \smash[b]{\shortminus}62 \\ 25 & 27 \end{bsmallmatrix}$, $\begin{bsmallmatrix} 27 & 158 \\ 25 & 147 \end{bsmallmatrix}$, $\begin{bsmallmatrix} \smash[b]{\shortminus}131 & \smash[b]{\shortminus}150 \\ 15 & 17 \end{bsmallmatrix}$, $\begin{bsmallmatrix} 176 & 15 \\ 45 & 4 \end{bsmallmatrix}$ \\
			\rowcolor{lightgray} \href{https://beta.lmfdb.org/ModularCurve/Q/15.36.1.b.1}{15.36.1.b.1} & $-\frac{1680914269}{32768}$ & 1560 & 288 & 17 & 30 & $\begin{bsmallmatrix} 11 & 261 \\ 15 & 356 \end{bsmallmatrix}$, $\begin{bsmallmatrix} 49 & \smash[b]{\shortminus}567 \\ 15 & 304 \end{bsmallmatrix}$, $\begin{bsmallmatrix} \smash[b]{\shortminus}6 & \smash[b]{\shortminus}1 \\ 25 & 3 \end{bsmallmatrix}$, $\begin{bsmallmatrix} 117 & 17 \\ 20 & 3 \end{bsmallmatrix}$, $\begin{bsmallmatrix} 27 & 7 \\ \smash[b]{\shortminus}130 & \smash[b]{\shortminus}33 \end{bsmallmatrix}$, $\begin{bsmallmatrix} 82 & 15 \\ 75 & 14 \end{bsmallmatrix}$, $\begin{bsmallmatrix} 29 & 0 \\ 15 & 1 \end{bsmallmatrix}$ \\
			\href{https://beta.lmfdb.org/ModularCurve/Q/16.96.3.fa.1}{16.96.3.fa.1} & $\frac{4097^{3}}{2^{4}}$ & 656 & 192 & 9 & 16 & $\begin{bsmallmatrix} 44 & 133 \\ 201 & 442 \end{bsmallmatrix}$, $\begin{bsmallmatrix} 40 & 91 \\ 187 & 550 \end{bsmallmatrix}$, $\begin{bsmallmatrix} 347 & 64 \\ 180 & 655 \end{bsmallmatrix}$, $\begin{bsmallmatrix} 135 & 546 \\ 552 & 37 \end{bsmallmatrix}$ \\
			\href{https://beta.lmfdb.org/ModularCurve/Q/16.96.3.fa.2}{16.96.3.fa.2} & $\frac{16974593}{256}$ & 656 & 192 & 9 & 16 & $\begin{bsmallmatrix} 235 & 164 \\ 552 & 55 \end{bsmallmatrix}$, $\begin{bsmallmatrix} 393 & 392 \\ 202 & 167 \end{bsmallmatrix}$, $\begin{bsmallmatrix} 395 & 612 \\ 400 & 63 \end{bsmallmatrix}$, $\begin{bsmallmatrix} 578 & 395 \\ 577 & 382 \end{bsmallmatrix}$ \\
			\href{https://beta.lmfdb.org/ModularCurve/Q/18.72.2.c.1}{18.72.2.c.1} & $406749952$ & 252 & 432 & 28 & 36 & $\begin{bsmallmatrix} 163 & 90 \\ 216 & 155 \end{bsmallmatrix}$, $\begin{bsmallmatrix} 221 & 96 \\ 122 & 235 \end{bsmallmatrix}$, $\begin{bsmallmatrix} 121 & 99 \\ 183 & 122 \end{bsmallmatrix}$, $\begin{bsmallmatrix} 20 & 135 \\ 69 & 173 \end{bsmallmatrix}$ \\
			\href{https://beta.lmfdb.org/ModularCurve/Q/18.72.2.c.2}{18.72.2.c.2} & $1792$ & 252 & 432 & 28 & 36 & $\begin{bsmallmatrix} 164 & 129 \\ 79 & 199 \end{bsmallmatrix}$, $\begin{bsmallmatrix} 116 & 57 \\ 1 & 1 \end{bsmallmatrix}$, $\begin{bsmallmatrix} 203 & 141 \\ 173 & 196 \end{bsmallmatrix}$, $\begin{bsmallmatrix} 133 & 18 \\ 156 & 107 \end{bsmallmatrix}$ \\
			\href{https://beta.lmfdb.org/ModularCurve/Q/24.72.2.hl.1}{24.72.2.hl.1} & $4913$ & 3120 & 576 & 41 & 48 & $\begin{bsmallmatrix} 117 & 188 \\ 28 & 45 \end{bsmallmatrix}$, $\begin{bsmallmatrix} 13 & 12 \\ 300 & 277 \end{bsmallmatrix}$, $\begin{bsmallmatrix} 9 & 10 \\ 170 & 189 \end{bsmallmatrix}$, $\begin{bsmallmatrix} 27 & 50 \\ 34 & 63 \end{bsmallmatrix}$, $\begin{bsmallmatrix} 19 & 32 \\ \smash[b]{\shortminus}154 & \smash[b]{\shortminus}259 \end{bsmallmatrix}$, $\begin{bsmallmatrix} \smash[b]{\shortminus}2 & 43 \\ \smash[b]{\shortminus}1 & 16 \end{bsmallmatrix}$, $\begin{bsmallmatrix} 160 & 27 \\ 189 & 32 \end{bsmallmatrix}$, $\begin{bsmallmatrix} 262 & 35 \\ 239 & 32 \end{bsmallmatrix}$, $\begin{bsmallmatrix} 10 & 3 \\ 99 & 32 \end{bsmallmatrix}$ \\
			\href{https://beta.lmfdb.org/ModularCurve/Q/24.72.2.hl.2}{24.72.2.hl.2} & $16974593$ & 3120 & 576 & 41 & 48 & $\begin{bsmallmatrix} 41 & 42 \\ \smash[b]{\shortminus}534 & \smash[b]{\shortminus}547 \end{bsmallmatrix}$,\,$\begin{bsmallmatrix} 25 & 18 \\ 18 & 13 \end{bsmallmatrix}$,\,$\begin{bsmallmatrix} 9 & \smash[b]{\shortminus}152 \\ 8 & \smash[b]{\shortminus}135 \end{bsmallmatrix}$,\,$\begin{bsmallmatrix} 11 & 42 \\ 138 & 527 \end{bsmallmatrix}$,\,$\begin{bsmallmatrix} \smash[b]{\shortminus}469 & \smash[b]{\shortminus}434 \\ 40 & 37 \end{bsmallmatrix}$,\,$\begin{bsmallmatrix} 34 & \smash[b]{\shortminus}101 \\ 23 & \smash[b]{\shortminus}68 \end{bsmallmatrix}$,\,$\begin{bsmallmatrix} 32 & 165 \\ 3 & 16 \end{bsmallmatrix}$,\,$\begin{bsmallmatrix} \smash[b]{\shortminus}14 & \smash[b]{\shortminus}13 \\ 23 & 20 \end{bsmallmatrix}$,\,$\begin{bsmallmatrix} \smash[b]{\shortminus}752 & 45 \\ \smash[b]{\shortminus}435 & 26 \end{bsmallmatrix}$ \\ \bottomrule
		\end{tabular}
	\end{table}
	\end{landscape}
	
	\clearpage
	
	\begin{table}
		\caption{The known conjugacy classes $\cG$ of open subgroups of $\GL(\Zhat)$ which occur as the $\cB_{0}(n)$-closure, for some $n \geq 1$, of $\cG_{j}$ for finitely many non-CM $j \in \Q$. For each conjugacy class $\cG$, the level $n$ and LMFDB label of $\cG$ is given, as well as the set of $j$-invariants $j \in \Q \setminus \Qcm$ such that $\cG = \clo{\cB_{0}(n)}{\cG_{j}}$. The sizes of the orbits of $B_{0}(n) \backslash {\GL(\Zhat)} / \cG$ are also given.} \label{tbl:finite_b_closures:finite_b0_closures}
		\renewcommand*{\arraystretch}{1.25}
		\begin{tabular}{rlll}
			$n$ & LMFDB & $j$-invariants & $B_{0}(n) \backslash {\GL(\Zhat)} / \cG$ \\ \toprule
			7 & \href{https://beta.lmfdb.org/ModularCurve/Q/7.56.1.b.1}{7.56.1.b.1} & $\frac{2268945}{128}$ & 2, 3, 3 \\
			11 & \href{https://beta.lmfdb.org/ModularCurve/Q/11.12.1.a.1}{11.12.1.a.1} & $-121$, $-24729001$ & 1, 11 \\
			12 & \href{https://beta.lmfdb.org/ModularCurve/Q/12.32.1.b.1}{12.32.1.b.1} & $-\frac{35937}{4}$, $\frac{109503}{64}$ & 3, 3, 9, 9 \\
			& \href{https://beta.lmfdb.org/ModularCurve/Q/12.48.1.q.1}{12.48.1.q.1} & $\frac{3375}{64}$ & 6, 6, 12 \\
			13 & \href{https://beta.lmfdb.org/ModularCurve/Q/13.91.3.a.1}{13.91.3.a.1} & $-\frac{143 \cdot 1040^{3}}{3^{13}}$, $\frac{130 \cdot 442^{3}}{3^{13}}$, $\frac{12077 \cdot 1957713745728^{3}}{305^{13}}$ & 6, 8 \\
			15 & \href{https://beta.lmfdb.org/ModularCurve/Q/15.24.1.a.1}{15.24.1.a.1} & $-\frac{25}{2}$, $-\frac{121945}{32}$, $\frac{46969655}{32768}$, $-\frac{349938025}{8}$ & 1, 3, 5, 15 \\
			& \href{https://beta.lmfdb.org/ModularCurve/Q/15.36.1.b.1}{15.36.1.b.1} & $\frac{1331}{8}$, $-\frac{1680914269}{32768}$ & 2, 2, 10, 10 \\
			17 & \href{https://beta.lmfdb.org/ModularCurve/Q/17.18.1.a.1}{17.18.1.a.1} & $-\frac{297756989}{2}$, $-\frac{882216989}{131072}$ & 1, 17 \\
			21 & \href{https://beta.lmfdb.org/ModularCurve/Q/21.32.1.a.1}{21.32.1.a.1} & $\frac{3375}{2}$, $-\frac{140625}{8}$, $-\frac{5745^{3}}{2^{7}}$, $-\frac{9 \cdot 505^{3}}{2^{21}}$ & 1, 3, 7, 21 \\
			28 & \href{https://beta.lmfdb.org/ModularCurve/Q/28.64.3.b.1}{28.64.3.b.1} & $\frac{351}{4}$, $-\frac{13 \cdot 1437^{3}}{2^{14}}$ & 3, 3, 21, 21 \\
			37 & \href{https://beta.lmfdb.org/ModularCurve/Q/37.38.2.a.1}{37.38.2.a.1} & $-9317$, $-7 \cdot 285371^{3}$ & 1, 37 \\ \bottomrule
		\end{tabular}
	\end{table}
	
	\clearpage
	
	\begin{table}
		\caption{The known conjugacy classes $\cG$ of open subgroups of $\GL(\Zhat)$ which occur as the $\cB_{1}(n)$-closure, for some $n \geq 1$, of $\cG_{j}$ for finitely many non-CM $j \in \Q$. For each conjugacy class $\cG$, the level $n$ and LMFDB label of $\cG$ is given, as well as the set of $j$-invariants $j \in \Q \setminus \Qcm$ such that $\cG = \clo{\cB_{1}(n)}{\cG_{j}}$. The sizes of the orbits of $B_{1}(n) \backslash {\GL(\Zhat)} / \cG$ are also given.} \label{tbl:finite_b_closures:finite_b1_closures}
		\renewcommand*{\arraystretch}{1.25}
		\begin{tabular}{rlll}
			$n$ & LMFDB & $j$-invariants & $B_{1}(n) \backslash {\GL(\Zhat)} / \cG$ \\ \toprule
			7 & \href{https://beta.lmfdb.org/ModularCurve/Q/7.56.1.b.1}{7.56.1.b.1} & $\frac{2268945}{128}$ & 6, 9, 9 \\
			11 & \href{https://beta.lmfdb.org/ModularCurve/Q/11.12.1.a.1}{11.12.1.a.1} & $-121$, $-24729001$ & 5, 55 \\
			12 & \href{https://beta.lmfdb.org/ModularCurve/Q/12.48.1.q.1}{12.48.1.q.1} & $\frac{3375}{64}$ & 12, 12, 24 \\
			& \href{https://beta.lmfdb.org/ModularCurve/Q/12.64.1.b.1}{12.64.1.b.1} & $\frac{109503}{64}$ & 3, 3, 6, 18, 18 \\
			& \href{https://beta.lmfdb.org/ModularCurve/Q/12.64.1.b.2}{12.64.1.b.2} & $-\frac{35937}{4}$ & 6, 6, 9, 9, 18 \\
			13 & \href{https://beta.lmfdb.org/ModularCurve/Q/13.91.3.a.1}{13.91.3.a.1} & $-\frac{143 \cdot 1040^{3}}{3^{13}}$, $\frac{130 \cdot 442^{3}}{3^{13}}$, $\frac{12077 \cdot 1957713745728^{3}}{305^{13}}$ & 36, 48 \\
			15 & \href{https://beta.lmfdb.org/ModularCurve/Q/15.48.1.a.1}{15.48.1.a.1} & $-\frac{121945}{32}$, $\frac{46969655}{32768}$ & 2, 2, 6, 6, 20, 60 \\
			& \href{https://beta.lmfdb.org/ModularCurve/Q/15.48.1.a.2}{15.48.1.a.2} & $-\frac{25}{2}$, $-\frac{349938025}{8}$ & 4, 10, 10, 12, 30, 30 \\
			& \href{https://beta.lmfdb.org/ModularCurve/Q/15.72.1.a.1}{15.72.1.a.1} & $-\frac{1680914269}{32768}$ & 8, 8, 20, 20, 40 \\
			& \href{https://beta.lmfdb.org/ModularCurve/Q/15.72.1.a.2}{15.72.1.a.2} & $\frac{1331}{8}$ & 4, 4, 8, 40, 40 \\
			16 & \href{https://beta.lmfdb.org/ModularCurve/Q/16.96.3.fa.1}{16.96.3.fa.1} & $\frac{4097^{3}}{2^{4}}$ & 16, 16, 16, 16, 32 \\
			& \href{https://beta.lmfdb.org/ModularCurve/Q/16.96.3.fa.2}{16.96.3.fa.2} & $\frac{16974593}{256}$ & 8, 8, 8, 8, 64 \\
			17 & \href{https://beta.lmfdb.org/ModularCurve/Q/17.36.1.a.1}{17.36.1.a.1} & $-\frac{297756989}{2}$ & 8, 68, 68 \\
			& \href{https://beta.lmfdb.org/ModularCurve/Q/17.36.1.a.2}{17.36.1.a.2} & $-\frac{882216989}{131072}$ & 4, 4, 136 \\
			18 & \href{https://beta.lmfdb.org/ModularCurve/Q/18.72.2.c.1}{18.72.2.c.1} & $406749952$ & 27, 27, 27, 27 \\
			& \href{https://beta.lmfdb.org/ModularCurve/Q/18.72.2.c.2}{18.72.2.c.2} & $1792$ & 9, 9, 9, 81 \\
			20 & \href{https://beta.lmfdb.org/ModularCurve/Q/20.48.1.a.1}{20.48.1.a.1} & $\frac{1026895}{1024}$ & 12, 12, 120 \\
			& \href{https://beta.lmfdb.org/ModularCurve/Q/20.48.1.a.2}{20.48.1.a.2} & $-\frac{1723025}{4}$ & 24, 60, 60 \\
			21 & \href{https://beta.lmfdb.org/ModularCurve/Q/21.64.1.a.1}{21.64.1.a.1} & $-\frac{9 \cdot 505^{3}}{2^{21}}$ & 6, 18, 21, 21, 126 \\
			& \href{https://beta.lmfdb.org/ModularCurve/Q/21.64.1.a.2}{21.64.1.a.2} & $\frac{3375}{2}$ & 6, 9, 9, 42, 126 \\
			& \href{https://beta.lmfdb.org/ModularCurve/Q/21.64.1.a.3}{21.64.1.a.3} & $-\frac{140625}{8}$ & 3, 3, 18, 42, 126 \\
			& \href{https://beta.lmfdb.org/ModularCurve/Q/21.64.1.a.4}{21.64.1.a.4} & $-\frac{5745^{3}}{2^{7}}$ & 6, 18, 42, 63, 63 \\
			24 & \href{https://beta.lmfdb.org/ModularCurve/Q/24.72.2.hl.1}{24.72.2.hl.1} & $4913$ & 32, 32, 128 \\
			& \href{https://beta.lmfdb.org/ModularCurve/Q/24.72.2.hl.2}{24.72.2.hl.2} & $16974593$ & 64, 64, 64 \\
			28 & \href{https://beta.lmfdb.org/ModularCurve/Q/28.128.5.b.1}{28.128.5.b.1} & $-\frac{13 \cdot 1437^{3}}{2^{14}}$ & 18, 18, 63, 63, 126 \\
			& \href{https://beta.lmfdb.org/ModularCurve/Q/28.128.5.b.2}{28.128.5.b.2} & $\frac{351}{4}$ & 9, 9, 18, 126, 126 \\
			37 & \href{https://beta.lmfdb.org/ModularCurve/Q/37.114.4.b.1}{37.114.4.b.1} & $-9317$ & 6, 6, 6, 666 \\
			& \href{https://beta.lmfdb.org/ModularCurve/Q/37.114.4.b.2}{37.114.4.b.2} & $-7 \cdot 285371^{3}$ & 18, 222, 222, 222 \\ \bottomrule
		\end{tabular}
	\end{table}
	
	\clearpage
	
	\printbibliography
\end{document}